\documentclass[reqno,11pt]{article}
\usepackage{latexsym,amsmath,amscd,amssymb,graphics,mathrsfs}
\usepackage{amsthm}
\usepackage{enumerate}
\usepackage{color}
\usepackage{url}
\usepackage[all]{xy}
\usepackage{enumerate}
\usepackage{hyperref}

\newcommand{\rem}[1]{}
\makeatletter

\textwidth 6.2 truein
\oddsidemargin 0 truein
\evensidemargin .2 truein
\topmargin -.6 truein
\textheight 9.1 in

\newcommand \al{\alpha}
\newcommand\be{\beta}
\newcommand\ga{\gamma}

\newcommand\ep{\varepsilon}
\newcommand\ze{\zeta}

\renewcommand\th{\theta}
\newcommand\io{\iota}

\newcommand\la{\lambda}

\newcommand\si{\sigma}

\newcommand\ph{\varphi}

\newcommand\om{\omega}
\newcommand\Ga{\Gamma}

\newcommand\La{\Lambda}
\newcommand\Si{\Sigma}

\newcommand\Om{\Omega}

\newcommand\resp{resp.\ }
\newcommand\ie{i.e.\ }

\newcommand\oo{{\infty}}

\renewcommand\o{\circ}

\newcommand\x{\times}
\newcommand\on{\operatorname}

\newcommand\Emb{\on{Emb}}

\newcommand\Den{\on{Den}}

\newcommand\iso{\on{iso}}

\newcommand\N{\mathcal{N}}
\newcommand\D{\mathcal{D}}

\newcommand\Diff{\on{Diff}}

\newcommand\id{\on{id}}

\newcommand{\symp}{\on{symp}}

\newcommand\pr{\on{pr}}

\renewcommand\L{\on{pounds}}
\newcommand\Ham{\on{Ham}}
\newcommand\Symp{\on{Symp}}
\newcommand\Isom{\on{Isom}}
\newcommand\KKS{\on{KKS}}
\newcommand\Flag{\on{Flag}}
\newcommand\pFlag{\on{Fr}}

\newcommand\ham{\on{\mathfrak{ham}}}

\newcommand\SL{\on{GL}}
\newcommand\g{\mathfrak g}

\newcommand\Gr{\on{Gr}}

\newcommand\ZZ{\mathbb Z}
\newcommand\TT{\mathbb T}
\newcommand\RR{\mathbb{R}}\newcommand\R{\mathbb{R}}
\newcommand\PP{\mathbb{P}}

\newcommand\X{\mathfrak X}
\renewcommand\S{\mathcal S}
\newcommand\E{\mathcal E}
\renewcommand\O{\mathcal O}
\newcommand\F{\mathcal F}
\newcommand\G{\mathcal G}

\renewcommand\L{\mathcal L}

\newcommand\wt{\on{wt}}

\newcommand\hwt{\on{hwt}}

\DeclareMathOperator{\evol}{evol}
\DeclareMathOperator{\Evol}{Evol}

\definecolor{burgund}{RGB}{153,0,51}      



\begin{document}

\newtheorem{theorem}{Theorem}[section]
\newtheorem{definition}[theorem]{Definition}
\newtheorem{lemma}[theorem]{Lemma}
\newtheorem{proposition}[theorem]{Proposition}
\newtheorem{corollary}[theorem]{Corollary}
\theoremstyle{remark}
\newtheorem{remark}[theorem]{Remark}
\newtheorem{example}[theorem]{Example}


\title{Weighted nonlinear flag manifolds as coadjoint orbits}
\author{Stefan Haller$^{1}$ and Cornelia Vizman$^{2}$ }

\addtocounter{footnote}{1}
\footnotetext{Department of Mathematics, University of Vienna, Oskar-Morgenstern-Platz 1, 1090 Vienna, Austria.
\texttt{stefan.haller@univie.ac.at}
\addtocounter{footnote}{1}}

\footnotetext{Department of Mathematics,
West University of Timi\c soara, Bd. V. Parvan 4,
300223 Timi\c soara, Romania.
\texttt{cornelia.vizman@e-uvt.ro}
\addtocounter{footnote}{1} }

\footnotetext{2020 Mathematics Subject Classification. 58D10 (primary); 37K65, 53C30, 53D20, 58D05.}
\addtocounter{footnote}{1}

\date{ }
\maketitle
\makeatother

\begin{abstract}
A weighted nonlinear flag is a nested set of closed submanifolds,
each submanifold endowed with a volume density.
We study the geometry of Fr\'echet manifolds of weighted nonlinear flags,
in this way generalizing the weighted nonlinear Grassmannians.
When the ambient manifold is symplectic, we use these nonlinear flags to describe a class of coadjoint orbits of the group of Hamiltonian diffeomorphisms,
orbits that consist of weighted isotropic nonlinear flags.
\end{abstract}

\section{Introduction}

In this article we study manifolds of weighted nonlinear flags,
motivated by the fact  that one can use them to describe new coadjoint orbits of the Hamiltonian group.
This adds to the already known coadjoint orbits described with configuration spaces of points, weighted isotropic nonlinear Grassmannians  \cite{W90,Lee,GBV17},
symplectic nonlinear Grassmannians \cite{HV}, and manifolds of symplectic nonlinear flags \cite{HV20}.

Let $M$ be a smooth manifold, and let $\S=(S_1,\dotsc,S_r)$ be a collection of closed manifolds of strictly increasing dimensions. 
We consider the Fr\'echet manifold $\Flag_{\S}(M)$ of nonlinear flags of type $\S$, 
i.e., sequences of nested embedded submanifolds $N_1\subseteq\cdots\subseteq N_r$ in $M$, with $N_i$ diffeomorphic to $S_i$.
Considering submanifolds equipped with nowhere zero densities, one obtains the manifold $\Flag_{\S}^{\wt}(M)$ of weighted nonlinear flags.
We describe its Fr\'echet manifold structure in two ways:
as a splitting smooth submanifold of the cartesian product of weighted nonlinear Grassmannians of type $S_i$ in $M$,
and as a locally trivial smooth fiber bundle over $\Flag_{\S}(M)$ associated to the principal bundle of nonlinear frames of type $\S$ in $M$.
To each weighted nonlinear flag one associates a compactly supported distribution on $M$ with controlled singularities, by the $\Diff(M)$ equivariant inclusion
\begin{equation*}\label{E:JJ}
J:\Flag^{\wt}_{\S}(M)\hookrightarrow C^\infty(M)^*,\quad
\bigl\langle J((N_1,\nu_1),\dotsc,(N_r,\nu_r)),f\bigr\rangle:=\sum_{i=1}^r\int_{N_i}f\nu_i.
\end{equation*}
The $\Diff(M)$ orbits in $\Flag_{\S}^{\wt}(M)$ are submanifolds of finite codimension,
for which we give an explicit homological description, up to connected components,
using nonlinear flags decorated with cohomology classes (see Theorems~\ref{T:wtFlag.homog} and~\ref{L4}).

Inspired by the results in \cite{W90,Lee,GBV17} on weighted isotropic nonlinear Grassmannians,
we consider the manifold of weighted isotropic nonlinear flags in a symplectic manifold $(M,\om)$.
The orbits for the natural action of the Hamiltonian group $\Ham_c(M)$ are submanifolds of finite codimension,
described as leaves of an isodrastic foliation.
Each isodrastic leaf of weighted nonlinear flags comes equipped with a canonical weakly non-degenerate symplectic form, and the map $J$ restricts to an equivariant moment map for the $\Ham_c(M)$ action, thus identifying the leaf with a coadjoint orbit of the Hamiltonian group (see Theorem~\ref{t2}).
Moreover, this coadjoint orbit is a splitting symplectic submanifold in a product of coadjoint orbits of weighted submanifolds of type $S_i$ in $M$.

The  lowest dimensional examples are the coadjoint orbits of $\Ham_c(\RR^2)$
consisting of pointed weighted vortex loops, treated in \cite{CVpre}.
We give more examples with nested spheres or tori, and provide explicit descriptions of the corresponding coadjoint orbits of the Hamiltonian group.

\paragraph{Acknowledgments.}
The first author would like to thank the West University of Ti\-mi\-\c soa\-ra for the warm hospitality.
He gratefully acknowledges the support of the Austrian Science Fund (FWF) grant P31663.
The second author would like to thank the University of Vienna for the warm hospitality.
She was partially supported by CNCS UEFISCDI, project number PN-III-P4-ID-PCE-2020-2888.


\section{Manifolds of weighted nonlinear flags}\label{s3}

A nonlinear flag is a sequence of nested closed submanifolds $N_1\subseteq\cdots\subseteq N_r$ in a smooth manifold $M$.
A weighted nonlinear flag is a nonlinear flag together with a volume density $\nu_i$ on each submanifold $N_i$.
Integrating against test functions $f\in C^\infty(M)$, a weighted nonlinear flag provides a compactly supported distribution on $M$ with mild singularities, $\sum_{i=1}^r\int_{N_i}f\nu_i$.

We will show that the space of all weighted nonlinear flags in $M$ is a Fr\'echet manifold in a natural way.
In fact, this is the total space of a locally trivial smooth bundle over the manifold of nonlinear flags discussed in~\cite{HV20}.
The natural $\Diff(M)$ action on the base of this bundle is locally transitive \cite[Proposition~2.9(a)]{HV20}.
The main aim of this section is to describe the $\Diff(M)$ orbits in the space of weighted nonlinear flags, see Theorem~\ref{T:wtFlag.homog} below.

\subsection{Weighted nonlinear Grassmannians}\label{2.2}

In this section we recall some basic facts about the manifolds of weighted submanifolds that appear in \cite{W90,Lee,GBV17}.
These weighted nonlinear Grassmannians constitute a special case of the weighted nonlinear flags to be introduced in Section~\ref{pre}.
We present them here in a manner that readily generalizes to the setting of nonlinear flags.

Let $S$ be a closed manifold of dimension $k$, allowed to be nonconnected and nonorientable.
For each manifold $M$, we let $\Gr_S(M)$ denote the \emph{nonlinear Grassmannian of type $S$ in $M$,} i.e., the space of all smooth submanifolds in $M$ that are diffeomorphic to $S$.
Moreover, we let $\Emb_S(M)$ denote the space of all parametrized submanifolds of type $S$ in $M$, i.e., the space of all smooth embeddings of $S$ into $M$.
Both, $\Emb_S(M)$ and $\Gr_S(M)$, are Fr\'echet manifolds in a natural way.
Furthermore, the $\Diff(M)$ equivariant map 
\begin{equation}\label{E:frame}
	\Emb_S(M)\to\Gr_S(M),\qquad\varphi\mapsto\varphi(S),
\end{equation}
is a smooth principal bundle with structure group $\Diff(S)$, a Fr\'echet Lie group, see \cite{BF81,M80b,M80c} and \cite[Theorem~44.1]{KM}.

For each closed $k$-dimensional manifold $S$, let 
\[
	\Den(S):=\Gamma^\infty(|\Lambda|_S)=\Omega^{k}(S;\O_S)
\]
denote the space of all smooth densities on $S$.
Here $\O_S$ denotes the orientation bundle of $S$ and $|\La|_S=\La^{k}T^*S\otimes \O_S$, see e.g.~\cite{Leebook}.
Densities on $S$ are the geometric quantities which can be integrated over $S$ in a coordinate independent way, without specifying an orientation or even assuming orientability.
We denote by $\Den_\x(S)$ the space of \emph{volume densities}, i.e., the space of nowhere vanishing densities.
Clearly, this is an open subset in the Fr\'echet space $\Den(S)$.

We define the \emph{weighted nonlinear Grassmannian of type $S$ in $M$} by
\begin{equation}\label{equiv}
	\Gr^{\wt}_{S}(M):=\bigl\{(N,\nu)\bigm|N\in\Gr_S(M),\nu\in\Den_\x(N)\bigr\},
\end{equation}
that is, the space of all submanifolds of type $S$ in $M$, decorated with a nowhere zero density.
We equip this space with the structure of a Fr\'echet manifold by declaring the natural bijection
\begin{equation}\label{grsw}
	\Gr_S^{\wt}(M)=\Emb_S(M)\x_{\Diff(S)}\Den_\x(S),\qquad\bigl(\ph(S),\ph_*\mu\bigr)\leftrightarrow[\ph,\mu],
\end{equation} 
to be a diffeomorphism.
Here the right hand side denotes the total space of the bundle associated to the nonlinear frame bundle in \eqref{E:frame} and the natural $\Diff(S)$ action on $\Den_\x(S)$.
In particular, the canonical forgetful map 
\begin{equation}\label{nop}
	\Gr^{\wt}_S(M)\to\Gr_S(M),\qquad(N,\nu)\mapsto N,
\end{equation}
becomes a locally trivial smooth bundle with typical fiber $\Den_\x(S)$.
Indeed, it corresponds to the bundle projection of the associated bundle $\Emb_S(M)\x_{\Diff(S)}\Den_\x(S)\to\Gr_S(M)$ via the identification in \eqref{grsw}.

There is a canonical $\Diff(M)$ equivariant map
\[
	J:\Gr^{\wt}_S(M)\to C^\infty(M)^*,\qquad\langle J(N,\nu),f\rangle:=\int_Nf\nu.
\]
This map is injective, and its image consists of compactly supported distributions with mild singularities: $J(N,\nu)$ is supported on $N$, and its wave front set coincides with the conormal bundle of $N$.

Let $\mu\in\Den_\x(S)$ be a volume density.
The space
\[
	\Gr_{S,\mu}^{\wt}(M):=\bigl\{(N,\nu)\in\Gr^{\wt}_S(M)\bigm|(S,\mu)\cong(N,\nu)\bigr\}
\]
is called the \emph{nonlinear Grassmannian of weighted submanifolds of type $(S,\mu)$} in $M$. 
It consists of all weighted submanifolds $(N,\nu)$ in $M$ such that there exists a diffeomorphism $S\to N$ taking $\mu$ to $\nu$.
Denoting the $\Diff(S)$ orbit of $\mu$ by $\Den(S)_\mu$, the identification in~\eqref{grsw} restricts to a canonical bijection
\begin{equation}\label{E:grswmu}
	\Gr_{S,\mu}^{\wt}(M)=\Emb_S(M)\times_{\Diff(S)}\Den(S)_\mu.
\end{equation}
It is well known \cite{M65} that the $\Diff(S)_0$ orbit of $\mu$ is a convex subset that consists of all volume densities on $S$ that represent the same cohomology class as $\mu$ in $H^k(S;\O_S)$, the de~Rham cohomology with coefficients in the orientation bundle.
Hence, the $\Diff(S)$ orbit of $\mu$ coincides with the set of all volume densities on $S$ that are in the preimage of $H^k(S;\mathcal O_S)_{[\mu]}$, the (finite) $\Diff(S)$ orbit of $[\mu]$ in $H^k(S;\O_S)$, under the $\Diff(S)$ equivariant linear map 
\begin{equation}\label{E:h}
	h_S:\Den(S)\to H^k(S;\O_S),\qquad h_S(\al)=[\al].
\end{equation}
More succinctly,
\begin{equation}\label{E:DenSmu}
	\Den(S)_\mu=\Den_\x(S)\cap h_S^{-1}\bigl(H^k(S;\mathcal O_S)_{[\mu]}\bigr).
\end{equation}
Hence, $\Den(S)_\mu$ is an open subset in a finite union of parallel closed affine subspaces with finite codimension.
In particular, $\Den(S)_\mu$ is a splitting smooth submanifold in $\Den_\x(S)$ with finite codimension $\dim H^k(S;\mathcal O_S)$ and with tangent spaces
\begin{equation}\label{talf}
	T_\al\Den(S)_\mu=\ker h_S=d\Om^{k-1}(S;\mathcal O_S).
\end{equation}
Using \eqref{E:grswmu} we conclude that $\Gr^{\wt}_{S,\mu}(M)$ is a splitting smooth submanifold in $\Gr^{\wt}_S(M)$ with finite codimension $\dim H^k(S;\mathcal O_S)$.
Moreover, the canonical forgetful map in \eqref{nop} restricts to a locally trivial smooth fiber bundle $\Gr^{\wt}_{S,\mu}(M)\to\Gr_S(M)$ with typical fiber $\Den(S)_\mu$.

The space of \emph{cohomologically weighted submanifolds of type $S$ in $M$} is defined as
\[
	\Gr_{S}^{\hwt}(M):=\bigl\{(N,[\nu]):N\in\Gr_S(M),[\nu]\in H^k(N;\mathcal O_N)\bigr\}.
\]
Using the canonical bijection
\begin{equation}\label{E:GrShwt}
	\Gr_{S}^{\hwt}(M)=\Emb_S(M)\x_{\Diff(S)}H^k(S;\O_S),
\end{equation}
we turn $\Gr_{S}^{\hwt}(M)$ into a smooth vector bundle of finite rank $\dim H^k(S;\O_S)$ over $\Gr_S(M)$. 
The canonical $\Diff(M)$ equivariant map
\[
	h_{\Gr_S(M)}:\Gr_S^{\wt}(M)\to\Gr_S^{\hwt}(M),\quad (N,\nu)\mapsto(N,[\nu]),
\]
is a smooth bundle map over $\Gr_S(M)$.
Indeed, via the diffeomorphisms in \eqref{E:grswmu} and \eqref{E:GrShwt} it corresponds to the map induced by \eqref{E:h}.

The space of \emph{cohomologically weighted submanifolds of type $(S,[\mu])$ in $M$} is defined by
\[
	\Gr_{S,[\mu]}^{\hwt}(M):=\bigl\{(N,[\nu])\in\Gr^{\hwt}_S(M):(N,[\nu])\cong(S,[\mu])\bigr\}
\]
and consists of all cohomologically weighted submanifolds $(N,[\nu])$ such that there exists a diffeomorphism $S\to N$ taking the cohomology class $[\mu]$ to $[\nu]$.
As \eqref{E:GrShwt} restricts to a bijection
\[
	\Gr_{S,[\mu]}^{\hwt}(M)=\Emb_S(M)\x_{\Diff(S)}H^k(S;\O_S)_{[\mu]},
\] 
we see that $\Gr_{S,[\mu]}^{\hwt}(M)$ is a finite covering of $\Gr_S(M)$. 
Using \eqref{E:DenSmu} we conclude
\begin{equation}\label{E:GrSmu.level}
	\Gr_{S,\mu}^{\wt}(M)=h_{\Gr_S(M)}^{-1}\bigl(\Gr_{S,[\mu]}^{\hwt}(M)\bigr).
\end{equation}

It is well known that the $\Diff_c(M)$ action on $\Emb_S(M)$ admits local smooth sections, see for instance \cite[Lemma~2.1(c)]{HV20}.
Furthermore, the (transitive) $\Diff(S)$ action on $\Den(S)_\mu$ also admits local smooth sections.
The latter can be shown using Moser's method of proof in \cite[Section~4]{M65}, see Lemma~\ref{L:diffmuS} below.
Using Lemma~\ref{before.sec} in the Appendix, we conclude that the natural $\Diff_c(M)$ action on $\Gr^{\wt}_{S,\mu}(M)$ admits local smooth sections.
In particular, this action is locally transitive.
Hence, each connected component of $\Gr^{\wt}_{S,\mu}(M)$ is a $\Diff_c(M)_0$ orbit.
Consequently, each $\Diff_c(M)$ or $\Diff(M)$ orbit in $\Gr^{\wt}_{S,\mu}(M)$ is a union of connected components.

\begin{remark}
	Poincar\'e duality provides a canonical $\Diff(S)$ equivariant isomorphism 
	\[
		H^k(S;\O_S)=H_0(S;\RR).
	\]
	Hence, specifying a cohomology class $[\mu]\in H^k(S;\O_S)$ amounts to specifying the total volume of $\mu$ on each connected component of $S$.
\end{remark}

\begin{example}
	If $S$ is connected, then $H^k(S;\O_S)=\R$ and the $\Diff(S)$ action is trivial on this cohomology.
	Hence, the orbit $H^k(S;\O_S)_{[\mu]}$ is a one-point set, and 
	\begin{equation}\label{dens}
		\Den(S)_\mu=\bigl\{\al\in\Den_\x(S):\textstyle\int_S\al=\int_S\mu\bigr\}
	\end{equation}
	is connected. 
	Correspondingly, 
	\[
		\Gr^{\wt}_{S,\mu}(M)=\bigl\{(N,\nu)\in\Gr^{\wt}_S(M):\textstyle\int_N\nu=\int_S\mu\bigr\}.
	\]
	This is the case considered in \cite{W90,Lee,GBV17}. 
	
	If $S$ is built out of two diffeomorphic connected components, then $H^k(S;\O_S)\cong\R^2$ and any diffeomorphism swapping the two connected components acts nontrivially on this cohomology.
	If $\mu$ has equal total volume on the two connected components, then the orbit $H^k(S;\O_S)_{[\mu]}$ is a one-point set and $\Den(S)_\mu$ is connected.
	Otherwise $H^k(S;\O_S)_{[\mu]}$ consists of two points and, by \eqref{E:DenSmu}, $\Den(S)_\mu$ has two connected components.
\end{example}

\begin{remark}\label{R:EmbGr}
	Suppose $\mu\in\Den_\x(S)$.
	It is well known that $\Diff(S,\mu)$, the group of diffeomorphisms preserving $\mu$, is a splitting Lie subgroup in $\Diff(S)$, see \cite[Theorem~III.2.5.3 on page~203]{Ham}.
	Moreover, the map provided by the action, $\Diff(S)\to\Den(S)_\mu$, $f\mapsto f_*\mu$, is a smooth principal bundle with structure group $\Diff(S,\mu)$.
	Via \eqref{E:grswmu} this implies that the surjective and $\Diff(M)$ equivariant map 
	\[
		\Emb_S(M)\to\Gr_{S,\mu}^{\wt}(M),\qquad\varphi\mapsto\bigl(\varphi(S),\varphi_*\mu\bigr),
	\]
	is smooth principal bundle with structure group $\Diff(S,\mu)$.
\end{remark}

\begin{remark}\label{R:DiffGr}
	Suppose $(N,\nu)\in\Gr_{S}^{\wt}(M)$ and let $\Gr_{S}^{\wt}(M)_{(N,\nu)}$ denote its $\Diff_c(M)$ orbit.
	Combining the preceeding remark with the fact that the $\Diff_c(M)$ action on $\Emb_S(M)$ admits local smooth sections \cite[Lemma~2.1(c)]{HV20}, we see that the map provided by the action, \[\Diff_c(M)\to\Gr_{S}^{\wt}(M)_{(N,\nu)},\qquad f\mapsto\bigl(f(N),f_*\nu\bigr)\] is a smooth principal bundle with structure group $\Diff_c(M,N,\nu)$, the group of diffeomorphisms preserving $N$ and $\nu$.
	The latter is a splitting Lie subgroup in $\Diff_c(M)$, for it coincides with the preimage of $\Diff(N,\nu)$ under the canonical bundle projection $\Diff_c(M,N)\to\Diff(N)$, see \cite[Lemma~2.1(d)]{HV20}.
	Hence, each orbit may be regarded as a homogeneous space,
	\[
		\Gr_S^{\wt}(M)_{(N,\nu)}=\Diff_c(M)/\Diff_c(M,N,\nu).
	\]
\end{remark}

\subsection{Weighted nonlinear flag manifolds}\label{pre}

Fix natural numbers $k_i$ such that 
\begin{equation}\label{E:dimSi}
	0\leq k_1<k_2<\cdots<k_r
\end{equation}
and let $\S=(S_1,\dotsc,S_r)$ be a collection of closed smooth manifolds with $\dim S_i=k_i$.

For a smooth manifold $M$ we let
$$
	\Flag_{\S}(M)
	:=\left\{\bigl(N_1,\dots,N_r)\in\prod_{i=1}^r\Gr_{S_i}(M)\middle|\forall i:N_i\subseteq N_{i+1}\right\}
$$
denote the space of \emph{nonlinear flags of type $\S$ in $M$,} and we write
$$
	\pFlag_{\S}(M)
	:=\left\{(\varphi_1,\dotsc,\varphi_r)\in\prod_{i=1}^r\Emb_{S_i}(M)\middle|\forall i:\varphi_i(S_i)\subseteq\varphi_{i+1}(S_{i+1})\right\}
$$
for the space of the space of \emph{nonlinear frames of type $\S$ in $M$.}
In \cite[Proposition~2.3]{HV20} it has been shown that $\Flag_{\S}(M)$ and $\pFlag_{\S}(M)$ are splitting smooth submanifolds of $\prod_{i=1}^r\Gr_{S_i}(M)$ and $\prod_{i=1}^r\Emb_{S_i}(M)$, respectively.
Moreover, the canonical $\Diff(M)$ equivariant map 
\begin{equation}\label{1}
	\pFlag_{\mathcal S}(M)\to\Flag_{\mathcal S}(M),
	\qquad(\ph_1,\dots,\ph_r)\mapsto\bigl(\ph_1(S_1),\dotsc,\ph_r(S_r)\bigr)
\end{equation} 
is a smooth principal fiber bundle with structure group 
$$
	\Diff(\mathcal S):=\prod_{i=1}^r\Diff(S_i).
$$

We denote the space of \emph{weighted nonlinear flags of type $\S$ in $M$} by
\begin{equation}\label{1717}
	\Flag^{\wt}_{\S}(M)
	:=\left\{\bigl((N_1,\nu_1),\dotsc,(N_r,\nu_r)\bigr)\in\prod_{i=1}^r\Gr^{\wt}_{S_i}(M)\middle|\forall i:N_i\subseteq N_{i+1}\right\}.
\end{equation}
This is a splitting smooth submanifold in $\prod_{i=1}^r\Gr^{\wt}_{S_i}(M)$, for it coincides with the preimage of the splitting smooth submanifold $\Flag_{\S}(M)$ under the bundle projection $\prod_{i=1}^r\Gr^{\wt}_{S_i}(M)\to\prod_{i=1}^r\Gr_{S_i}(M)$.
Moreover, the canonical $\Diff(M)$ equivariant forgetful map
\begin{equation}\label{E:Flagwtp}
	\Flag_{\mathcal S}^{\wt}(M)\to\Flag_{\mathcal S}(M),\quad\bigl((N_1,\nu_1),\dotsc,(N_r,\nu_r)\bigr)\mapsto(N_1,\dotsc,N_r),
\end{equation}
is a smooth fiber bundle with typical fiber
\begin{equation}\label{not}
	\Den_\x(\mathcal S):=\prod_{i=1}^r\Den_\x(S_i).
\end{equation}
The latter is a $\Diff(\S)$ invariant open subset in the Fr\'echet space $\Den(\mathcal S):=\prod_{i=1}^r\Den(S_i)$.
Furthermore, the canonical $\Diff(M)$ equivariant bijection
\begin{align}\label{asoc}
	\Flag_{\mathcal S}^{\wt}(M)&=\pFlag_{\mathcal S}(M)\times_{\Diff(\mathcal S)}\Den_\x(\mathcal S),\\\notag
	\Bigl(\bigl(\varphi_1(S_1),(\varphi_1)_*\mu_1\bigr),\dotsc,\bigl(\varphi_r(S_r),(\varphi_r)_*\mu_r\bigr)\Bigr)&\leftrightarrow\bigl[(\varphi_1,\dotsc,\varphi_r),(\mu_1,\dotsc,\mu_r)\bigr],
\end{align}
is a diffeomorphism between $\Flag_{\mathcal S}^{\wt}(M)$ and the bundle associated to the nonlinear frame bundle in \eqref{1} and the canonical $\Diff(\S)$ action on $\Den_\x(\S)$.
Indeed, this is just the bundle diffeomorphism $\prod_{i=1}^r\Gr^{\wt}_{S_i}(M)=\prod_{i=1}^r\Emb_{S_i}(M)\times_{\Diff(S_i)}\Den_\times(S_i)$ obtained by taking the product of the diffeomorphisms in \eqref{grsw}, restricted over the submanifold $\Flag_{\S}(M)$ in its base $\prod_{i=1}^r\Gr_{S_i}(M)$.

We have a canonical $\Diff(M)$ equivariant map
\begin{equation}\label{pl}
	J:\Flag^{\wt}_{\mathcal S}(M)\to C^\infty(M)^*,\quad
	\bigl\langle J((N_1,\nu_1),\dotsc,(N_r,\nu_r)),f\bigr\rangle:=\sum_{i=1}^r\int_{N_i}f\nu_i.
\end{equation}
The image of $J$ consists of compactly supported distributions on $M$ with mild singularities.
More precisely, the wave front set of $J((N_1,\nu_1),\dotsc,(N_r,\nu_r))$ coincides with the union of the conormal bundles of $N_1,\dotsc,N_r$.

\begin{lemma}\label{injectiv}
	The map in \eqref{pl} is injective.
\end{lemma}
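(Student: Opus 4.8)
The plan is to recover, from the distribution $J((N_1,\nu_1),\dotsc,(N_r,\nu_r))$, first the underlying flag $N_1\subseteq\cdots\subseteq N_r$ and then each density $\nu_i$. Since the $S_i$ have strictly increasing dimensions $k_1<\cdots<k_r$, the top-dimensional submanifold $N_r$ is distinguished: I would argue that the support of the distribution equals $N_r$ (each $\nu_i$ being nowhere zero, $\int_{N_i}f\nu_i\neq0$ for suitable $f$ supported near any given point of $N_i$, so the support is $\bigcup_i N_i=N_r$). Alternatively, and more robustly, the wave front set of $J$ is the union of the conormal bundles $\bigcup_i \nu^*N_i$, as already recorded after \eqref{pl}; the conormal bundle $\nu^*N_i$ has fibre dimension $\dim M-k_i$, so the component of largest fibre dimension detects $N_1$, and intersecting the wave front set with the zero section's complement fibrewise by dimension peels off the $N_i$ one at a time. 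Either way, the nested submanifolds $N_1,\dotsc,N_r$ are determined by $J((N_1,\nu_1),\dotsc,(N_r,\nu_r))$ alone.

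Once the $N_i$ are known, I would recover the densities inductively, from the top down. For test functions $f$ supported in an open set $U\subseteq M$ disjoint from $N_{r-1}$, one has $\langle J(\cdots),f\rangle=\int_{N_r}f\nu_r$, and since $N_r\setminus N_{r-1}$ is dense in $N_r$ (as $N_{r-1}$ has strictly smaller dimension, hence is nowhere dense in $N_r$) and $\nu_r$ is continuous, this determines $\nu_r$ on all of $N_r$: integration of compactly supported functions on $N_r\setminus N_{r-1}$ against $\nu_r$ determines $\nu_r$ there, and continuity extends it uniquely. Having pinned down $\nu_r$, I subtract $\int_{N_r}f\nu_r$ from $\langle J(\cdots),f\rangle$ to obtain the distribution $\sum_{i=1}^{r-1}\int_{N_i}f\nu_i$ attached to the truncated weighted flag, and repeat: restrict to test functions supported away from $N_{r-2}$, use density of $N_{r-1}\setminus N_{r-2}$ in $N_{r-1}$, and so on, down to $\nu_1$ recovered from integration over $N_1$. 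This shows that the full weighted flag $((N_1,\nu_1),\dotsc,(N_r,\nu_r))$ is determined by its image under $J$, i.e.\ $J$ is injective.

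The one genuine point requiring care, and the step I expect to be the main obstacle, is the claim that $N_i\setminus N_{i-1}$ is dense in $N_i$ and that a volume density is determined by its restriction to a dense open subset together with continuity. The first follows because $N_{i-1}$ is a closed embedded submanifold of $M$ of dimension $k_{i-1}<k_i$, hence its intersection with $N_i$ is a closed subset of $N_i$ with empty interior (a lower-dimensional submanifold cannot contain an open subset of $N_i$), so its complement in $N_i$ is open and dense. The second is immediate: a smooth density is a section of a line bundle, two continuous sections agreeing on a dense set agree everywhere, and a density is determined by the functional $f\mapsto\int f\,\nu$ on compactly supported test functions on the (open, hence itself a manifold) set $N_i\setminus N_{i-1}$ by the usual argument with a partition of unity and local coordinates. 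With these two facts in hand the induction goes through without further difficulty, and one could equally phrase the whole argument using the wave front set description to identify the $N_i$, which makes the first paragraph rigorous without appealing to any structure of the $N_i$ beyond their dimensions.
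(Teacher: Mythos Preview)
Your proposal is correct and uses essentially the same ingredients as the paper: the density of $N_r\setminus N_{r-1}$ in $N_r$, bump functions, and an induction peeling off the top stratum. The paper's proof is framed as a direct comparison of two weighted flags rather than a reconstruction: assuming $J(\mathcal N,\nu)=J(\mathcal N',\nu')$, it first shows $N_r=N_r'$ by testing near a hypothetical point of $N_r\setminus N_r'$ (taken outside $N_{r-1}$ by density), then shows $\nu_r=\nu_r'$ by testing near a point of disagreement taken outside $N_{r-1}\cup N_{r-1}'$, and finally subtracts and inducts. This sidesteps the one mild circularity in your reconstruction phrasing---that recovering $\nu_r$ by testing ``away from $N_{r-1}$'' presupposes knowing $N_{r-1}$---without invoking the wave front set description, which the paper records but does not prove. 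Your wave front set route closes that gap as well, so both arguments are complete; the paper's is simply the more elementary of the two.
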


\begin{proof}
	Suppose $J((N_1,\nu_1),\dotsc,(N_r,\nu_r))=J((N_1',\nu_1'),\dotsc,(N_r',\nu_r'))$.
	Proceeding by induction on $r$, it suffices to show $N_r=N_r'$ and $\nu_r=\nu_r'$.
	
	To show $N_r=N_r'$, we assume by contradiction that there exists $x\in N_r$ with $x\notin N_r'$.
	Using \eqref{E:dimSi}, we see that $N_r\setminus N_{r-1}$ is dense in $N_r$.
	Thus, we may w.l.o.g.\ assume $x\notin N_{r-1}$.
	Moreover, $\nu_r(x)\neq0$ as $\nu_r$ does not vanish on $N_r$.
	Hence, if $f$ is a smooth bump function supported on a sufficiently small neighborhood of $x$, then $\langle J((N_1,\nu_1),\dotsc,(N_r,\nu_r)),f\rangle=\int_{N_r}f\nu_r\neq0$ and $\langle J((N_1',\nu_1'),\dotsc,(N_r',\nu_r')),f\rangle=0$.
	Since this contradicts our assumption, we conclude $N_r=N_r'$.
	
	To show $\nu_r=\nu_r'$, we assume by contradiction that there exists $x\in N_r=N_r'$ with $\nu_r(x)\neq\nu_r'(x)$.
	As before, we may w.l.o.g.\ assume $x\notin N_{r-1}$ and $x\notin N_{r-1}'$.
	Hence, if $f$ is a smooth bump function supported in a sufficiently small neighborhood of $x$, then $\langle J((N_1,\nu_1),\dotsc,(N_r,\nu_r)),f\rangle=\int_{N_r}f\nu_r\neq\int_{N_r'}f\nu_r'=\langle J((N_1',\nu_1'),\dotsc,(N_r',\nu_r')),f\rangle$.
	Since this contradicts our assumption, we conclude $\nu_r=\nu_r'$.
\end{proof}

\begin{remark}
	Suppose $\omega$ is a symplectic form on $M$, and let $\Flag_{\S}^{\symp}(M)$ denote {the} manifold of symplectic nonlinear flags of type $\S$, cf.~\cite[Section~4.2]{HV20}.
	Recall that this is the open subset consisting of all flags $(N_1,\dotsc,N_r)\in\Flag_{\S}(M)$ such that $\omega$ restricts to a symplectic form on each $N_i$.
	Hence, $k_i$ must be even and $\omega^{k_i/2}$ pulls back to a volume form on $N_i$ which in turn gives rise to a volume density $\nu_i=|\iota_{N_i}^*\omega^{k_i/2}|$ on $N_i$.
	Consequently, the symplectic form $\omega$ provides a $\Symp(M,\omega)$ equivariant injective smooth map (section) 
	\begin{equation}\label{E:symp.sec}
		\Flag^{\symp}_{\S}(M)\to\Flag_{\S}^{\wt}(M)
	\end{equation}
	which is right inverse to the restriction of the canonical bundle projection in \eqref{E:Flagwtp}.
	Composing the map in \eqref{E:symp.sec} with $J$ in \eqref{pl}, we obtain the moment map considered in \cite[Eq.~(38)]{HV20}.
\end{remark}

\begin{remark}
	A Riemannian metric $g$ on $M$ induces a volume density on every submanifold of $M$.
	Hence, $g$ provides a smooth section $\Flag_{\S}(M)\to\Flag_{\S}^{\wt}(M)$ of the canonical bundle projection in \eqref{E:Flagwtp}, which is $\Isom(M,g)$ equivariant, cf.~\cite[Section~6]{W90}.
\end{remark}

\subsection{Reduction of structure group}\label{SS:red}

It will be convenient to use a reduction of the structure group for the principal frame bundle in \eqref{1}.
To this end, we fix embeddings $\iota_i\colon S_i\to S_{i+1}$ and put $\iota=(\iota_1,\dotsc,\iota_{r-1})$.

We begin by recalling some facts from \cite[Proposition~2.10]{HV20}.
The space of \emph{nonlinear flags of type $(\S,\iota)$ in $M$,}
\[
	\Flag_{\S,\iota}(M):=\left\{(N_1,\dotsc,N_r)\in\Flag_{\S}(M)\middle|\begin{array}{c}\bigl(S_1\xrightarrow{\iota_1}S_2\xrightarrow{\iota_2}\cdots\to S_r\bigr)\\
	\cong\bigl(N_1\subseteq N_2\subseteq\cdots\subseteq N_r\bigr)\end{array}\right\},
\]
consists of all nonlinear flags $(N_1,\dotsc,N_r)$ in $M$ such that there exist diffeomorphisms $S_i\to N_i$, $1\leq i\leq r$ intertwining $\iota_i$ with the canonical inclusion $N_i\subseteq N_{i+1}$.
This is a $\Diff(M)$ invariant open and closed subset in $\Flag_{\S}(M)$.
The space of \emph{parametrized nonlinear flags ({nonlinear} frames) of type $(\S,\iota)$ in $M$,}
\[
	\pFlag_{\S,\iota}(M):=\left\{(\varphi_1,\dotsc,\varphi_r)\in\pFlag_{\S}(M)\middle|\forall i:\varphi_i=\varphi_{i+1}\circ\iota_i\right\},
\]
is a splitting smooth submanifold of $\pFlag_{\S}(M)$.
Moreover, the map $\pFlag_{\S,\iota}(M)\to\Flag_{\S,\iota}(M)$ obtained by restriction of \eqref{1} is a smooth principal bundle with structure group 
\begin{equation}\label{E:diff}
	\Diff(\mathcal S;\iota):=\left\{(g_1,\dotsc,g_r)\in\prod_{i=1}^r\Diff(S_i)\middle|\forall i:g_{i+1}\circ\iota_i=\iota_i\circ g_i\right\}.
\end{equation}
The latter is a splitting Lie subgroup in $\Diff(\S)$ with Lie algebra
\begin{equation}\label{88}
	\mathfrak X(\mathcal S;\iota)=\left\{(Z_1,\dotsc,Z_r)\in\prod_{i=1}^r\mathfrak X(S_i)\middle|\forall i:Z_{i+1}\circ\iota_i=T\iota_i\circ Z_i\right\}.
\end{equation}
We obtain a $\Diff(M)$ equivariant commutative diagram
\begin{equation}\label{E:red.diag}
	\vcenter{
	\xymatrix{
		\pFlag_{\S,\iota}(M)\ar[d]_{\Diff(\mathcal S;\iota)}\ar@{^(->}[r]&\pFlag_{\mathcal S}(M)\ar[d]^{\Diff(\mathcal S)}
		\\
		\Flag_{\mathcal S,\iota}(M)\ar@{^(->}[r]&\Flag_{\mathcal S}(M).
	}}
\end{equation}
which may be regarded as a reduction of the structure group for \eqref{1} along the inclusion $\Diff(\mathcal S;\iota)\subseteq\Diff(\S)$ over $\Flag_{\mathcal S,\iota}(M)$, see \cite[Proposition~2.10]{HV20} for more details.

\begin{remark}\label{R:Fr.Emb}
The $\Diff(M)$ equivariant bijection
\begin{equation}\label{E:DiffSiEmb}
	\pFlag_{\S,\iota}(M)=\Emb_{S_r}(M),\qquad(\varphi_1,\dotsc,\varphi_r)\mapsto\varphi_r,
\end{equation}
is a diffeomorphism \cite[Proposition~2.10(b)]{HV20}.
Correspondingly, we have a group isomorphism
\begin{equation}\label{E:DiffSrSigma}
	\Diff(\mathcal S;\iota)=\Diff(S_r;\Sigma),\quad(g_1,\dotsc,g_r)\mapsto g_r,
\end{equation}
	where $\Diff(S_r;\Sigma)$ denotes the subgroup of all diffeomorphisms of $S_r$ preserving the nonlinear flag $\Sigma=(\Sigma_1,\dotsc,\Sigma_{r-1})$ in $S_r$, where $\Sigma_i:=(\iota_{r-1}\circ\cdots\circ\iota_i)(S_i)$.
The latter is a splitting Lie subgroup of $\Diff(S_r)$, see \cite[Proposition~2.9(b)]{HV20}, and \eqref{E:DiffSrSigma} is a diffeomorphism of Lie groups \cite[Proposition~2.10(a)]{HV20}.
The Lie algebra of $\Diff(\S;\io)$, can be identified in a similar way with $\X(S_r;\Si)$, the Lie algebra of vector fields on $S_r$ that are tangent to $\Sigma_1,\dotsc,\Sigma_{r-1}$.
\end{remark}

We are interested in the reduction of structure group \eqref{E:red.diag} because the $\Diff_c(M)$ action on $\pFlag_{\S,\iota}(M)$ admits local smooth sections.
This follows from \cite[Lemma~2.1(c)]{HV20} and the diffeomorphism in \eqref{E:DiffSiEmb}.

Let $\Flag^{\wt}_{\S,\iota}(M)$ denote the preimage of $\Flag_{\S,\iota}(M)$ under the bundle projection in \eqref{E:Flagwtp}.
Restricting the diffeomorphism in \eqref{asoc} over $\Flag_{\S,\iota}(M)$ and combining this with the diffeomorphism in \eqref{E:DiffSiEmb}, we obtain a $\Diff(M)$ equivariant diffeomorphism of bundles over $\Flag_{\S,\iota}(M)$,
\begin{equation}\label{E:FlagwtSiota}
	\Flag^{\wt}_{\S,\iota}(M)=\Emb_{S_r}(M)\times_{\Diff(\S,\iota)}\Den_\x(\S).
\end{equation}

\subsection{The $\Diff(M)$ action on the space of weighted nonlinear flags}\label{SS:DiffM.Flag}

In this section we aim at describing the $\Diff(M)$ orbits in $\Flag_{\S}^{\wt}(M)$, see Theorem~\ref{T:wtFlag.homog} below.

Let $\iota=(\iota_1,\dotsc,\iota_{r-1})$ be a collection of embeddings $\iota_i:S_i\to S_{i+1}$ and suppose $\mu=(\mu_1,\dotsc,\mu_r)\in\Den_\x(\S)$.
We define the space of \emph{ weighted flags of type $(\S,\iota,\mu)$ in $M$} by
\[
	\Flag^{\wt}_{\S,\iota,\mu}(M):=\left\{\bigl((N_1,\nu_1),\dotsc,(N_r,\nu_r)\bigr)\in\Flag^{\wt}_{\S}(M)\middle|\begin{array}{c}\bigl(S_1\xrightarrow{\iota_1}S_2\xrightarrow{\iota_2}\cdots\to S_r,\mu_1,\dotsc,\mu_r\bigr)\\\cong\bigl(N_1\subseteq N_2\subseteq\cdots\subseteq N_r,\nu_1,\dotsc,\nu_r\bigr)\end{array}\right\},
\]
that is, the space of all weighted flags $\bigl((N_1,\nu_1),\dotsc,(N_r,\nu_r)\bigr)$ in $M$ such that there exist diffeomorphisms $S_i\to N_i$, $1\leq i\leq r$, intertwining $\iota_i$ with the canonical inclusion $N_i\subseteq N_{i+1}$, and taking $\mu_i$ to $\nu_i$.

Denoting the $\Diff(\S,\iota)$ orbit of $\mu$ by $\Den(\S)_{\iota,\mu}$, the diffeomorphism in \eqref{E:FlagwtSiota} restricts to a $\Diff(M)$ equivariant bijection
\begin{equation}\label{Simu}
	\Flag^{\wt}_{\S,\iota,\mu}(M)=\Emb_{S_r}(M)\times_{\Diff(\S,\iota)}\Den(\S)_{\iota,\mu}.
\end{equation}

Consider the finite dimensional vector space
\begin{equation}\label{HS}
	H(\mathcal S,\iota)
	:=\prod_{i=1}^rH^{k_i}\bigl(S_i,\iota_{i-1}(S_{i-1});\mathcal O_{S_i}\bigr)
	=\prod_{i=1}^rH_0\bigl(S_i\setminus\iota_{i-1}(S_{i-1});\mathbb R\bigr).
\end{equation}
Here the left hand side denotes relative de~Rham cohomology with coefficients in the orientation bundle, and we are using the convention $S_0=\emptyset$.
The $\Diff(\S,\iota)$ equivariant identification on the right hand side indicates Poincar\'e--Lefschetz duality.
We have a $\Diff(\S,\iota)$ equivariant linear map
\begin{equation}\label{E:hiota}
	h_{\S,\iota}\colon\Den(\mathcal S)\to H(\mathcal S,\iota),\qquad h_{\S,\iota}(\mu_1,\dotsc,\mu_r):=\bigl([\mu_1],\dotsc,[\mu_r]\bigr).
\end{equation}
Pinning down the class $[\mu]:=h_{\S,\iota}(\mu)$ thus amounts to specifying the integrals of $\mu_i$ over each connected component of $S_i\setminus\iota_{i-1}(S_{i-1})$ for $i=1,\dotsc,r$.

\begin{remark}[Large codimensions]\label{larg}
	If the codimensions $\dim(S_i)-\dim(S_{i-1})$ are all strictly larger than one, then $H^{k_i}\bigl(S_i,\iota_{i-1}(S_{i-1});\O_{S_i}\bigr)=H^{k_i}\bigl(S_i;\O_{S_i}\bigr)=H_0(S_i;\R)$, and
	\[
		H(\S,\io)=\prod_{i=1}^{r}H^{k_i}\bigl(S_i;\O_{S_i}\bigr)
		=\prod_{i=1}^{r}H_0\bigl(S_i;\RR\bigr).
	\]
	Hence, in this case, the cohomology space $H(\S,\io)$ does not depend on the embeddings $\io$.
\end{remark}

\begin{proposition}\label{P:DenS.homog}
	In this situation the following hold true:
	\begin{enumerate}[(a)]
		\item\label{I:DenS.homog.orbito}
			The $\Diff(\mathcal S,\iota)_0$ orbit of $\mu$ coincides with the convex set $\Den_\times(\mathcal S)\cap h_{\S,\iota}^{-1}([\mu])$.
			In particular, this orbit is a splitting smooth submanifold in $\Den_\x(\mathcal S)$ with finite codimension $\dim H(\S,\iota)$.
		\item\label{I:DenS.homog.seco}
			The $\Diff(\mathcal S,\iota)_0$ action on $\Den_\times(\mathcal S)\cap h_{\S,\iota}^{-1}([\mu])$ admits local smooth sections.
		\item\label{I:DenS.homog.orbit}
			Denoting the (finite) $\Diff(\mathcal S;\iota)$ orbit of $[\mu]$ by $H(\mathcal S,\iota)_{[\mu]}$, 
			the $\Diff(\S;\io)$ orbit of $\mu$ is
			\begin{equation}\label{densim}
				\Den(\mathcal S)_{\iota,\mu}=\Den_\times(\mathcal S)\cap h_{\S,\iota}^{-1}\bigl(H(\mathcal S,\iota)_{[\mu]}\bigr).
			\end{equation}
			In particular, $\Den(\mathcal S)_{\iota,\mu}$ is a splitting smooth submanifold in $\Den_\x(\mathcal S)$ with finite codimension $\dim H(\S,\iota)$ and with tangent spaces 
			\begin{equation}\label{tans}
				T_\al\Den(\mathcal S)_{\iota,\mu}
				=\ker h_{\S,\iota}
				=\bigl\{(d\ga_1,\dotsc,d\ga_r):\ga_i\in\Om^{k_i-1}(S_i;\mathcal O_{S_i}),\iota_{i-1}^*\ga_i=0\bigr\}.
			\end{equation}
			Moreover, the $\Diff(\mathcal S,\iota)$ action on $\Den(\mathcal S)_{\io,\mu}$ admits local smooth sections.
		\item\label{I:DenS.homog.Gr}
			The canonical inclusion $\Den(\mathcal S)_{\iota,\mu}\subseteq\prod_{i=1}^r\Den(S_i)_{\mu_i}$ is a splitting smooth submanifold of finite codimension.
	\end{enumerate}
\end{proposition}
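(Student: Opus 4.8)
The plan is to reduce everything to the linear-algebraic structure of the map $h_{\S,\iota}$ in \eqref{E:hiota} and then invoke Moser-type arguments. First I would analyze the relative de~Rham cohomology appearing in \eqref{HS}: for each $i$, the long exact sequence of the pair $(S_i,\iota_{i-1}(S_{i-1}))$ with coefficients in $\O_{S_i}$, combined with the fact that the top cohomology of the submanifold $\iota_{i-1}(S_{i-1})$ vanishes in degree $k_i>k_{i-1}$, shows that $H^{k_i}(S_i,\iota_{i-1}(S_{i-1});\O_{S_i})\to H^{k_i}(S_i;\O_{S_i})$ is surjective with kernel generated by classes supported near $\iota_{i-1}(S_{i-1})$; dually, $H_0(S_i\setminus\iota_{i-1}(S_{i-1});\RR)$ counts the connected components of the complement. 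This identifies $\ker h_{\S,\iota}$ with the space on the right of \eqref{tans}, namely tuples $(d\ga_1,\dotsc,d\ga_r)$ with $\iota_{i-1}^*\ga_i=0$, by exactness of the relative de~Rham complex: a density $\al_i$ on $S_i$ represents the zero class in $H^{k_i}(S_i,\iota_{i-1}(S_{i-1});\O_{S_i})$ precisely when $\al_i=d\ga_i$ for some $\ga_i\in\Om^{k_i-1}(S_i;\O_{S_i})$ pulling back to zero on the submanifold.

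Next I would prove part \eqref{I:DenS.homog.orbito}. Convexity of $\Den_\x(\S)\cap h_{\S,\iota}^{-1}([\mu])$ is immediate: it is the intersection of the convex open set $\Den_\x(\S)$ with an affine subspace. That this set is a single $\Diff(\S,\iota)_0$ orbit is the heart of the matter and is proved by Moser's method: given $\mu,\al$ in this set, the path $\al_t=(1-t)\mu+t\al$ stays in $\Den_\x(\S)$ by convexity, and $\dot\al_t=\al-\mu$ has zero class in $H(\S,\iota)$, so by the previous paragraph $\al_i-\mu_i=d\ga_i$ with $\iota_{i-1}^*\ga_i=0$; one then solves $i_{Z_{t,i}}\al_{t,i}=-\ga_i$ for a time-dependent vector field $Z_{t,i}$ on $S_i$ (possible since $\al_{t,i}$ is a volume density), and the compatibility condition $\iota_{i-1}^*\ga_i=0$ together with the nondegeneracy forces $T\iota_{i-1}\circ Z_{t,i-1}=Z_{t,i}\circ\iota_{i-1}$, i.e.\ $(Z_{t,1},\dotsc,Z_{t,r})\in\X(\S;\iota)$ as in \eqref{88} — this is the step I expect to be the main obstacle, since one must check the vector fields genuinely assemble into an element of the reduced Lie algebra rather than merely being tangent to each stratum, and this requires care near $\iota_{i-1}(S_{i-1})$ where $\ga_i$ vanishes. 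Its flow (which exists for all $t\in[0,1]$ since the $S_i$ are compact) then gives the desired element of $\Diff(\S,\iota)_0$. This Moser homotopy also yields \eqref{I:DenS.homog.seco}: the construction is smooth in $\al$, so it produces local smooth sections of the action map, exactly as in \cite[Section~4]{M65}; I would state this as Lemma~\ref{L:diffmuS} is invoked, or prove the parametrized version directly.

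For part \eqref{I:DenS.homog.orbit}, I would argue that the full $\Diff(\S;\iota)$ orbit of $\mu$ is the union of the $\Diff(\S,\iota)_0$ orbits through all points $g\cdot\mu$ with $g\in\Diff(\S;\iota)$; since $h_{\S,\iota}$ is $\Diff(\S;\iota)$ equivariant, $h_{\S,\iota}(g\cdot\mu)$ runs over the finite orbit $H(\S,\iota)_{[\mu]}$, and conversely each point of $h_{\S,\iota}^{-1}(H(\S,\iota)_{[\mu]})\cap\Den_\x(\S)$ lies in one of these level sets, so by \eqref{I:DenS.homog.orbito} it is in the orbit. This gives \eqref{densim}; the submanifold, codimension, and tangent-space claims follow since $\Den(\S)_{\iota,\mu}$ is an open subset of the finite union of parallel affine subspaces $h_{\S,\iota}^{-1}(c)$, $c\in H(\S,\iota)_{[\mu]}$, each of codimension $\dim H(\S,\iota)$ because $h_{\S,\iota}$ is a continuous linear surjection onto a finite-dimensional space (surjectivity of $h_{\S,\iota}$: prescribe the component integrals by bump densities). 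Local smooth sections for the $\Diff(\S,\iota)$ action follow from those for $\Diff(\S,\iota)_0$ on each piece. Finally, \eqref{I:DenS.homog.Gr}: by \eqref{E:DenSmu} one has $\prod_i\Den(S_i)_{\mu_i}=\Den_\x(\S)\cap\bigl(\prod_i h_{S_i}\bigr)^{-1}\bigl(\prod_i H^{k_i}(S_i;\O_{S_i})_{[\mu_i]}\bigr)$, and the natural surjection $H(\S,\iota)\twoheadrightarrow\prod_i H^{k_i}(S_i;\O_{S_i})$ (from the maps $H^{k_i}(S_i,\iota_{i-1}(S_{i-1});\O_{S_i})\to H^{k_i}(S_i;\O_{S_i})$ identified above) intertwines $h_{\S,\iota}$ with $\prod_i h_{S_i}$; hence $\Den(\S)_{\iota,\mu}$ is cut out inside $\prod_i\Den(S_i)_{\mu_i}$ by finitely many affine conditions whose linear parts span a finite-dimensional space, namely the kernel of that surjection, so the inclusion is splitting of finite codimension $\dim\bigl(\ker\bigl(H(\S,\iota)\to\prod_i H^{k_i}(S_i;\O_{S_i})\bigr)\bigr)=\sum_i\bigl(\dim H_0(S_i\setminus\iota_{i-1}(S_{i-1}))-\dim H_0(S_i)\bigr)$.
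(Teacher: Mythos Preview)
Your identification of $\ker h_{\S,\iota}$ and your arguments for parts \eqref{I:DenS.homog.orbit} and \eqref{I:DenS.homog.Gr} are correct and match the paper's. The gap is exactly where you flag it, and your proposed resolution does not work.

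Solving $i_{Z_{t,i}}\alpha_{t,i}=-\gamma_i$ determines $Z_{t,i}$ \emph{uniquely}, since $\alpha_{t,i}$ is a volume density; there is no freedom left to arrange compatibility. The condition $\iota_{i-1}^*\gamma_i=0$ only says that $\gamma_i$ annihilates vectors tangent to $\iota_{i-1}(S_{i-1})$. When $k_i-k_{i-1}\geq2$ this is vacuous (a $(k_i-1)$-form pulls back to zero on a $k_{i-1}$-manifold for degree reasons), so it gives no control whatsoever over $Z_{t,i}$ along $\iota_{i-1}(S_{i-1})$. When $k_i-k_{i-1}=1$ it does force $Z_{t,i}$ to be tangent to $\iota_{i-1}(S_{i-1})$, but says nothing about \emph{which} tangent field it is: the value of $Z_{t,i}$ there is determined by the normal component of $\gamma_i$ together with $\alpha_{t,i}$, while $Z_{t,i-1}$ is determined by the unrelated data $\gamma_{i-1}$ and $\alpha_{t,i-1}$. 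Concretely, take $S_1=S^1$ embedded as a meridian in $S_2=\TT^2$: your $Z_{t,2}$ is tangent to the meridian, but there is no reason for it to equal the pushforward of $Z_{t,1}$. Also note that $\gamma_i$ does not ``vanish'' along $\iota_{i-1}(S_{i-1})$ as you write; it merely pulls back to zero, which is strictly weaker.

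The paper repairs this by proceeding inductively on $r$ rather than solving independently on each stratum. The key step is Lemma~\ref{L:irM}: given $S\subseteq N$ with $\dim S<\dim N=n$, a volume density $\mu$ on $N$, a form $\gamma\in\Omega^{n-1}(N;\O_N)$ with $\iota_S^*\gamma=0$, and a \emph{prescribed} vector field $Z\in\X(S)$, one can find $X\in\X(N)$ with $L_X\mu=d\gamma$ and $X|_S=Z$. One does \emph{not} set $i_X\mu=\gamma$; instead one extends $Z$ arbitrarily to $\tilde Z$, and then corrects by $Y$ with $i_Y\mu=\tilde\gamma$, where $\tilde\gamma$ satisfies $d\tilde\gamma=d(\gamma-i_{\tilde Z}\mu)$ and genuinely vanishes along $S$ (not merely pulls back to zero); such $\tilde\gamma$ is produced by a chain homotopy coming from a smooth retraction of a tubular neighborhood onto $S$. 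With this lemma in hand, the paper first obtains (by induction) local smooth sections for the $\Diff(\S',\iota')_0$ action on the truncated level set, then uses Lemma~\ref{L:diffmuS} (the integrated form of Lemma~\ref{L:irM}, via Lemma~\ref{L:loc.sec}) to extend these to local smooth sections for the $\Diff(\S,\iota)_0$ action. Your Moser path is implicitly there, but the vector fields must be built from the inside out, feeding the already-constructed $Z_{t,i-1}$ in as boundary data for $Z_{t,i}$.
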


We postpone the proof of this proposition and proceed with the main result in this section:

\begin{theorem}\label{T:wtFlag.homog}
	In this situation the following hold true:
	\begin{enumerate}[(a)]
		\item	The space $\Flag^{\wt}_{\S,\iota,\mu}(M)$ is a splitting smooth submanifold in $\Flag^{\wt}_{\S,\iota}(M)$ with finite codimension $\dim H(\S,\iota)$.
		\item	The canonical $\Diff(M)$ equivariant forgetful map $\Flag^{\wt}_{\S,\iota,\mu}(M)\to\Flag_{\S,\iota}(M)$ is a locally trivial smooth fiber bundle with typical fiber $\Den(\S)_{\iota,\mu}$.
		\item	The canonical inclusion $\Flag^{\wt}_{\S,\iota,\mu}(M)\subseteq\prod_{i=1}^r\Gr_{S_i,\mu_i}^{\wt}(M)$ is a splitting smooth submanifold.
		\item	The $\Diff_c(M)$ action on $\Flag^{\wt}_{\S,\iota,\mu}(M)$ admits local smooth sections.
			In particular, each connected component of $\Flag^{\wt}_{\S,\iota,\mu}(M)$ is a $\Diff_c(M)_0$ orbit.
			Furthermore, every $\Diff(M)$ or $\Diff_c(M)$ orbit in $\Flag^{\wt}_{\S,\iota,\mu}(M)$ is a union of connected components.
	\end{enumerate}
\end{theorem}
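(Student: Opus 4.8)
The plan is to derive all four statements from the associated-bundle description in \eqref{E:FlagwtSiota}, namely $\Flag^{\wt}_{\S,\iota}(M)=\Emb_{S_r}(M)\times_{\Diff(\S,\iota)}\Den_\x(\S)$, together with Proposition~\ref{P:DenS.homog} and the Appendix gluing lemma. By \eqref{Simu} the subspace $\Flag^{\wt}_{\S,\iota,\mu}(M)$ corresponds, under this diffeomorphism, to $\Emb_{S_r}(M)\times_{\Diff(\S,\iota)}\Den(\S)_{\iota,\mu}$. So the whole point is that $\Flag^{\wt}_{\S,\iota,\mu}(M)$ is the subbundle of the locally trivial bundle $\Flag^{\wt}_{\S,\iota}(M)\to\Flag_{\S,\iota}(M)$ cut out fiberwise by $\Den(\S)_{\iota,\mu}\subseteq\Den_\x(\S)$.

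\emph{Part (a).} Since the $\Diff(\S,\iota)$ action on $\Den_\x(\S)$ preserves the subset $\Den(\S)_{\iota,\mu}$ (it is a $\Diff(\S;\iota)$ orbit by Proposition~\ref{P:DenS.homog}\eqref{I:DenS.homog.orbit}, and $\Diff(\S,\iota)\subseteq\Diff(\S;\iota)$), and since by that same part $\Den(\S)_{\iota,\mu}$ is a splitting smooth submanifold of $\Den_\x(\S)$ of finite codimension $\dim H(\S,\iota)$, the associated subbundle $\Emb_{S_r}(M)\times_{\Diff(\S,\iota)}\Den(\S)_{\iota,\mu}$ is a splitting smooth submanifold of $\Emb_{S_r}(M)\times_{\Diff(\S,\iota)}\Den_\x(\S)$ of the same finite codimension: this can be checked in a local trivialization of the frame bundle $\Emb_{S_r}(M)\to\Flag_{\S,\iota}(M)$, where both sides become products with a factor of $\Den_\x(\S)$, respectively $\Den(\S)_{\iota,\mu}$. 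Transporting along \eqref{E:FlagwtSiota} and \eqref{Simu} gives the claim.

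\emph{Parts (b) and (c).} For (b), the forgetful map $\Flag^{\wt}_{\S,\iota,\mu}(M)\to\Flag_{\S,\iota}(M)$ is, in the associated-bundle picture, the bundle projection of $\Emb_{S_r}(M)\times_{\Diff(\S,\iota)}\Den(\S)_{\iota,\mu}$; since the frame bundle \eqref{1} restricted over $\Flag_{\S,\iota}(M)$ is locally trivial, so is this associated bundle, with typical fiber $\Den(\S)_{\iota,\mu}$. For (c), combine the chain of inclusions
\[
	\Flag^{\wt}_{\S,\iota,\mu}(M)\subseteq\Flag^{\wt}_{\S,\iota}(M)\subseteq\prod_{i=1}^r\Gr^{\wt}_{S_i}(M):
\]
the second is splitting of finite codimension (as noted in Section~\ref{SS:red}, being a restriction of \eqref{E:Flagwtp}), and we must upgrade the first to land in $\prod_i\Gr^{\wt}_{S_i,\mu_i}(M)$. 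This follows from Proposition~\ref{P:DenS.homog}\eqref{I:DenS.homog.Gr}, which says $\Den(\S)_{\iota,\mu}\subseteq\prod_i\Den(S_i)_{\mu_i}$ is splitting of finite codimension; applying the associated-bundle construction and using $\Gr^{\wt}_{S_i,\mu_i}(M)=\Emb_{S_i}(M)\times_{\Diff(S_i)}\Den(S_i)_{\mu_i}$ from \eqref{E:grswmu} gives that $\Flag^{\wt}_{\S,\iota,\mu}(M)$ is a splitting submanifold of $\prod_i\Gr^{\wt}_{S_i,\mu_i}(M)$ of finite codimension.

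\emph{Part (d).} This is where the Appendix gluing lemma (Lemma~\ref{before.sec}) does the work. We have two facts: the $\Diff_c(M)$ action on $\pFlag_{\S,\iota}(M)=\Emb_{S_r}(M)$ admits local smooth sections (noted in Section~\ref{SS:red}, via \cite[Lemma~2.1(c)]{HV20}), and the $\Diff(\S,\iota)$ action on $\Den(\S)_{\iota,\mu}$ admits local smooth sections (Proposition~\ref{P:DenS.homog}\eqref{I:DenS.homog.orbit}). Feeding these into Lemma~\ref{before.sec} — exactly as done for $\Gr^{\wt}_{S,\mu}(M)$ in Section~\ref{2.2} — yields that the $\Diff_c(M)$ action on the associated bundle $\Emb_{S_r}(M)\times_{\Diff(\S,\iota)}\Den(\S)_{\iota,\mu}=\Flag^{\wt}_{\S,\iota,\mu}(M)$ admits local smooth sections. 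Local transitivity of this action then implies each connected component is a single $\Diff_c(M)_0$ orbit, whence every $\Diff_c(M)$ orbit, and a fortiori every $\Diff(M)$ orbit, is a union of connected components.

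The main obstacle is \emph{Part (d)}, and specifically making sure that the hypotheses of Lemma~\ref{before.sec} are genuinely met: one needs the structure group acting here to be $\Diff(\S,\iota)$ (the reduced group, whose action on the frame bundle has local sections), not the larger $\Diff(\S;\iota)$ under which $\Den(\S)_{\iota,\mu}$ is a single orbit — and correspondingly one uses the $\Diff(\S,\iota)$ (not $\Diff(\S,\iota)_0$) local sections from Proposition~\ref{P:DenS.homog}\eqref{I:DenS.homog.orbit}. Everything else is a transcription of the weighted-Grassmannian argument of Section~\ref{2.2} with $\Den(S)_\mu$ replaced by $\Den(\S)_{\iota,\mu}$, so the content is entirely carried by Proposition~\ref{P:DenS.homog}.
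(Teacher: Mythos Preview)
Your proof is correct and follows essentially the same route as the paper's: parts~(a) and~(b) from \eqref{E:FlagwtSiota}, \eqref{Simu} and Proposition~\ref{P:DenS.homog}\eqref{I:DenS.homog.orbit}; part~(c) from Proposition~\ref{P:DenS.homog}\eqref{I:DenS.homog.Gr} together with the reduction of structure group and \eqref{E:grswmu}; and part~(d) from Lemma~\ref{before.sec} applied to the $\Diff_c(M)$ action on $\Emb_{S_r}(M)$ and the $\Diff(\S;\iota)$ action on $\Den(\S)_{\iota,\mu}$.

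One clarification: your ``main obstacle'' paragraph rests on a misreading of the paper's notation. The symbols $\Diff(\S,\iota)$ and $\Diff(\S;\iota)$ denote the \emph{same} group throughout (the comma/semicolon variation is an inconsistency, not a distinction); compare \eqref{E:diff} with \eqref{E:FlagwtSiota} and \eqref{Simu}. So there is no issue of a ``larger'' versus ``reduced'' group to reconcile, and your parenthetical ``$\Diff(\S,\iota)\subseteq\Diff(\S;\iota)$'' is vacuous. The hypotheses of Lemma~\ref{before.sec} are met directly: the structure group of the principal bundle $\Emb_{S_r}(M)\to\Flag_{\S,\iota}(M)$ is $\Diff(\S;\iota)$, and Proposition~\ref{P:DenS.homog}\eqref{I:DenS.homog.orbit} supplies local smooth sections for exactly this group's action on $\Den(\S)_{\iota,\mu}$.
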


\begin{proof}
	Parts (a) and (b) follow by combining \eqref{E:FlagwtSiota} and \eqref{Simu} with Proposition~\ref{P:DenS.homog}\eqref{I:DenS.homog.orbit}.

	Part (c) follows from Proposition~\ref{P:DenS.homog}\eqref{I:DenS.homog.Gr} and the reduction of structure groups in \eqref{E:red.diag} via the diffeomorphisms in \eqref{E:grswmu} and \eqref{Simu}, see also \eqref{E:DiffSiEmb}.

	Let us finally turn to part (d).
	By Proposition~\ref{P:DenS.homog}\eqref{I:DenS.homog.orbit}, the (transitive) $\Diff(\mathcal S;\iota)$ action on $\Den(\mathcal S)_{\iota,\mu}$ admits local smooth sections.
	The $\Diff_c(M)$ action on $\Emb_{S_r}(M)$ admits local smooth sections too, cf.~\cite[Lemma~2.1(c)]{HV20}.
	Using Lemma~\ref{before.sec}, we conclude that the $\Diff_c(M)$ action on $\Flag^{\wt}_{\S,\iota,\mu}(M)$ admits local smooth sections, cf.~\eqref{E:FlagwtSiota}.
\end{proof}

We will prove Proposition~\ref{P:DenS.homog} by induction on the depth of the flags, using the following crucial lemma whose proof we postpone.

\begin{lemma}\label{L:diffmuS}
	Let $S$ be a closed submanifold of $N$ such that $\dim(S)<\dim(N)=n$, and consider the $\Diff(N,S)$ equivariant linear map
	\[
		h:\Den(N)\to H^n(N,S;\mathcal O_N),\qquad h(\mu)=[\mu].
	\]
	Then, for each $\kappa\in H^n(N,S;\mathcal O_N)$, the natural $\Diff(N,S)_0$ action on
	\begin{equation}\label{E:abc}
        	\Diff(S)\times\bigl(\Den_\times(N)\cap h^{-1}(\kappa)\bigr)
	\end{equation}
	admits local smooth sections.
\end{lemma}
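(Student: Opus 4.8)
The plan is to reduce this to a parametrized version of Moser's trick. The group $\Diff(N,S)_0$ acts on the first factor $\Diff(S)$ via the restriction homomorphism $\Diff(N,S)_0 \to \Diff(S)$, $F \mapsto F|_S$, and on the second factor $\Den_\times(N) \cap h^{-1}(\kappa)$ by push-forward of densities; note the latter is well-defined because $F_*$ preserves the relative cohomology class $[\mu] \in H^n(N,S;\O_N)$ for $F \in \Diff(N,S)_0$. First I would fix a base point $(g_0, \mu_0)$ and seek a smooth local section of the action map $\Diff(N,S)_0 \to \Diff(S) \times (\Den_\times(N) \cap h^{-1}(\kappa))$, $F \mapsto (F|_S, F_*\mu_0)$, defined near $(g_0,\mu_0)$. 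By composing with right translations in $\Diff(S)$ and using that $\Diff_c(N)$-type actions admit local sections it suffices to construct the section near $(\id_S, \mu_0)$, i.e.\ to produce, depending smoothly on a pair $(g,\mu)$ close to $(\id_S,\mu_0)$, a diffeomorphism $F = F(g,\mu) \in \Diff(N,S)_0$ with $F|_S = g$ and $F_*\mu_0 = \mu$.

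\textbf{The two-step construction.} I would build $F$ as a composition $F = F_2 \circ F_1$. The first diffeomorphism $F_1 \in \Diff(N,S)_0$ is chosen so that $F_1|_S = g$; such an $F_1$ can be produced smoothly in $g$ by the standard fact that the restriction map $\Diff(N,S) \to \Diff(S)$ is a smooth locally trivial principal bundle (with structure group $\Diff(N\,\mathrm{rel}\,S)$), cf.~\cite[Lemma~2.1(d)]{HV20}, so it admits a local smooth section near $\id_S$. The second diffeomorphism $F_2$ must fix $S$ pointwise and satisfy $(F_2)_* (F_1)_*\mu_0 = \mu$, equivalently $F_2^* \mu = (F_1)_*\mu_0 =: \mu_1$. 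Both $\mu$ and $\mu_1$ lie in $\Den_\times(N) \cap h^{-1}(\kappa)$, hence $[\mu] = [\mu_1]$ in $H^n(N,S;\O_N)$, so $\mu - \mu_1 = d\gamma$ for some $\gamma \in \Omega^{n-1}(N;\O_N)$ with $\iota_S^*\gamma = 0$; moreover $\gamma$ can be chosen to depend linearly (hence smoothly) on $\mu - \mu_1$ by picking a continuous linear right inverse to $d$ on the relevant closed subspace, as in \eqref{tans} and the discussion around \eqref{E:DenSmu}. Now run Moser's method: set $\mu_t = \mu_1 + t(\mu - \mu_1) = \mu_1 + t\,d\gamma$, a path of volume densities (shrinking the neighborhood of $\mu_0$ if necessary to keep $\mu_t$ nowhere zero), define the time-dependent vector field $X_t$ by $\iota_{X_t}\mu_t = -\gamma$, and let $F_2$ be its time-one flow. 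Because $\iota_S^*\gamma = 0$, the vector field $X_t$ is tangent to $S$ (indeed, at points of $S$ the equation forces $X_t$ to lie in $TS$, since $\gamma$ annihilates all $n$-tuples containing a normal vector only if... one checks the contraction condition pins $X_t|_S \in TS$), so its flow preserves $S$; in fact one can arrange $X_t|_S = 0$ by a further adjustment, giving $F_2 \in \Diff(N\,\mathrm{rel}\,S)$ with $F_2^*\mu_t = \mu_1$ for all $t$, whence $F_2^*\mu = \mu_1$ as required. Smoothness of $F_2$ in $(g,\mu)$ follows from smooth dependence of flows on parameters.

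\textbf{Main obstacle.} The delicate point is the boundary behavior along $S$: I need $X_t$ to not merely preserve $S$ setwise but to vanish on $S$ (or at least be tangent to $S$), so that $F_2$ fixes $S$ pointwise and the composite $F = F_2 \circ F_1$ still restricts to $g$ on $S$. This is exactly where the relative cohomology condition $\iota_S^*\gamma = 0$ is used, but turning ``$\iota_S^*\gamma = 0$'' into ``$X_t$ vanishes on $S$'' requires care: $\iota_S^*\gamma = 0$ only says the tangential part of $\gamma$ along $S$ vanishes, so a priori $X_t|_S$ could still have a nonzero component along $S$ coming from the normal part of $\gamma$ paired against $\mu_t$. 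The fix is to first modify $\gamma$ within its class (subtracting $d$ of something supported near $S$, using a collar of $S$ in $N$ and the relative de~Rham theory) so that $\gamma$ itself vanishes on a neighborhood of $S$, not just its pullback; then $X_t \equiv 0$ near $S$ and everything goes through. Carrying out this localization while retaining linear (smooth) dependence on $\mu - \mu_1$ is the technical heart of the argument. The parametrized bookkeeping — checking at each stage that all choices depend smoothly on $(g,\mu)$ in the Fréchet sense — is routine given the cited local-section results for $\Diff(N,S) \to \Diff(S)$ and standard smooth dependence of Moser flows on parameters.
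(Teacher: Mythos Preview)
Your overall strategy is correct and runs parallel to the paper's, though packaged differently. The paper works infinitesimally: it first shows (Lemma~\ref{L:irM}) that for any $\mu\in\Den_\times(N)$, any $\gamma$ with $\iota_S^*\gamma=0$, and any $Z\in\X(S)$ there exists $X\in\X(N)$ with $X|_S=Z$ and $L_X\mu=d\gamma$; it then feeds this into the abstract integration principle of Lemma~\ref{L:loc.sec} (regularity of $\Diff(N,S)$) to obtain local smooth sections. Your two-step decomposition $F=F_2\circ F_1$ --- first adjust the $\Diff(S)$ factor via a local section of $\Diff(N,S)\to\Diff(S)$, then run Moser rel~$S$ --- is the direct global version of the same idea and avoids the appeal to Lemma~\ref{L:loc.sec}. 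Both routes hinge on the same point: arranging the primitive $\gamma$ so that the Moser vector field is trivial along $S$.

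There is, however, a genuine slip in your fix for the obstacle. You propose to modify $\gamma$ by subtracting an exact form so that $\gamma$ vanishes on an open neighborhood of $S$. This is impossible in general: any such modification leaves $d\gamma=\mu-\mu_1$ unchanged, and there is no reason for $\mu$ and $\mu_1=(F_1)_*\mu_0$ to coincide near $S$, so $d\gamma$ (hence $\gamma$) cannot vanish identically there. What you actually need, and what suffices, is the weaker condition $\gamma|_S=0$ as a section of $\Lambda^{n-1}T^*N\otimes\O_N$ over $S$; then $i_{X_t}\mu_t=-\gamma$ forces $X_t|_S=0$ and the flow fixes $S$ pointwise. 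The paper obtains this via a chain homotopy: choose a smooth homotopy $h\colon N\times[0,1]\to N$ with $h_1=\id_N$, $h_t|_S=\id_S$, and $h_0$ collapsing a tubular neighborhood of $S$ into $S$, and set $\tilde\gamma:=h_0^*\gamma+\phi(d\gamma)$ with $\phi$ the associated chain homotopy operator. Then $d\tilde\gamma=d\gamma$ and $\tilde\gamma|_S=0$ (the first summand because $h_0$ factors through $S$ near $S$ and $\iota_S^*\gamma=0$, the second because $h_t$ is constant in $t$ along $S$), and $\tilde\gamma$ depends linearly on $(\gamma,d\gamma)$, which also handles your smooth-dependence concern. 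With $\gamma$ replaced by $\tilde\gamma$, your Moser argument goes through verbatim.
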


\begin{proof}[{Proof of Proposition~\ref{P:DenS.homog}}]
	We proceed by induction on $r$ using Lemma~\ref{L:diffmuS}.
	Let us denote the truncated sequences by $\mathcal S':=(S_1,\dotsc,S_{r-1})$, $\mu':=(\mu_1,\dotsc,\mu_{r-1})$, and $\iota':=(\iota_1,\dotsc,\iota_{r-2})$.
	By induction, the $\Diff(\mathcal S',\iota')_0$ action on $\Den_\times(\mathcal S')\cap h_{\S',\iota'}^{-1}([\mu'])$ admits local smooth sections.
	Hence, there exist an open neighborhood $U'$ of $\mu'$ in $\Den_\times(\mathcal S')\cap h_{\S',\iota'}^{-1}([\mu'])$ and a smooth map
	\[
		\Den_\times(\mathcal S')\cap h_{\S',\iota'}^{-1}([\mu'])\supseteq U'\xrightarrow{\sigma'}\Diff(\mathcal S';\iota')_0,
	\]
	such that for all $\tilde\mu'\in U'$ we have
	\begin{equation}
		\bigl(\sigma'(\tilde\mu')\bigr)_*\mu'=\tilde\mu'\qquad\text{and}\qquad\sigma'(\mu')=\id.
	\end{equation}

	Recall that $\Diff(\mathcal S',\iota')$ is a splitting Lie subgroup in $\Diff(S_{r-1})$, cf.~\cite[Propositions~2.9(b) and 2.10(a)]{HV20}.
	Using \cite[Lemma~2.1(d)]{HV20} this implies that $\Diff(\mathcal S,\iota)$ is a splitting Lie subgroup in $\Diff(S_r,\iota_{r-1}(S_{r-1}))$.
	Hence, restricting a local smooth section as in Lemma~\ref{L:diffmuS}, we see that the $\Diff(\mathcal S,\iota)_0$ action on $\Diff(\mathcal S',\iota')\times\left(\Den_\times(S_r)\cap h^{-1}([\mu_r])\right)$ admits local smooth sections.
	In other words, there exist an open neighborhood $V$ of the identity in $\Diff(\mathcal S';\iota')$, an open neighborhood $U''$ of $\mu_r$ in $\Den_\times(S_r)\cap h^{-1}([\mu_r])$, and a smooth map
	\[
		\Diff(\mathcal S';\iota')\times\left(\Den_\times(S_r)\cap h^{-1}([\mu_r])\right)\supseteq V\times U''\xrightarrow{\sigma''}\Diff(\mathcal S;\iota)_0,
	\]
	such that for all $g\in V$ and $\tilde\mu_r\in U''$ we have
	\begin{equation}
		\sigma''(g,\tilde\mu_r)\cdot(\id,\mu_r)=(g,\tilde\mu_r)\qquad\text{and}\qquad\sigma''(\id,\mu_r)=\id.
	\end{equation}

	Hence, $U:=(\sigma')^{-1}(V)\times U''$ is an open neighborhood of $\mu$ in $\Den_\times(\mathcal S)\cap h_{\S,\iota}^{-1}([\mu])$, and
	\[
		\Den_\times(\mathcal S)\cap h_{\S,\iota}^{-1}([\mu])\supseteq U\xrightarrow\sigma\Diff(\mathcal S;\iota)_0,
	        \qquad\sigma(\tilde\mu_1,\dots,\tilde\mu_r):=\sigma''(\sigma'(\tilde\mu_1,\dotsc,\tilde\mu_{r-1}),\tilde\mu_r)
	\]
	is a local smooth section for the $\Diff(\mathcal S;\iota)_0$ action on $\Den_\times(\mathcal S)\cap h_{\S,\iota}^{-1}([\mu])$, i.e.,
	\[
        	\sigma(\mu)=\id\qquad\text{and}\qquad\sigma(\tilde\mu)_*\mu=\tilde\mu,
	\]
	for all $\tilde\mu\in U$.
	By convexity, $\Den_\times(\mathcal S)\cap h_{\S,\iota}^{-1}([\mu])$ is connected.
	Therefore, the $\Diff(\mathcal S;\iota)_0$ action is transitive on $\Den_\times(\mathcal S)\cap h_{\S,\iota}^{-1}([\mu])$.
	This shows \eqref{I:DenS.homog.orbito} and \eqref{I:DenS.homog.seco}.
	Part \eqref{I:DenS.homog.orbit} follows immediately.

	To see \eqref{I:DenS.homog.Gr}, let $A$ denote the preimage of $\prod_{i=1}^rH^{k_i}(S_i;\mathcal O_{S_i})_{[\mu_i]}$ under the canonical linear surjection $H(\S,\io)\to\prod_{i=1}^rH^{k_i}(S_i;\mathcal O_{S_i})$.
	Hence, $A$ is a finite union of affine subspaces in $H(\S,\io)$.
	In view of \eqref{E:DenSmu} we have $\prod_{i=1}^r\Den(S_i)_{\mu_i}=\Den_\x(\S)\cap h_{\S,\iota}^{-1}(A)$.
	Combining this with \eqref{densim}, we conclude that $\Den(\S)_{\iota,\mu}$ is a splitting smooth submanifold in $\prod_{i=1}^r\Den(S_i)_{\mu_i}$ with finite codimension $\dim A$.
\end{proof}

Let us next establish the following infinitesimal version of Lemma~\ref{L:diffmuS}:

\begin{lemma}\label{L:irM}
	Let $S$ be a closed submanifold of $N$ such that $\dim(S)<\dim(N)=n$.
	Suppose $\mu\in\Den_\x(N)$, $\gamma\in\Omega^{n-1}(N,S;\O_N):=\{\al\in\Om(N;\O_N):\io_S^*\al=0\}$, and $Z\in\mathfrak X(S)$.
	Then there exists a vector field $X\in\mathfrak X(N)$ such that $L_X\mu=d\gamma$ and $X|_S=Z$.
\end{lemma}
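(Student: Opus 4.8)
The idea is to build $X$ by patching a vector field supported near $S$ that restricts to $Z$ with a correction term that fixes the Lie derivative condition, using a relative version of Moser's trick. First I would choose any extension $\tilde Z\in\mathfrak X(N)$ of $Z$; such an extension exists because $S$ is a closed submanifold, e.g.\ by extending in a tubular neighborhood and cutting off. Then $L_{\tilde Z}\mu$ is a density on $N$, and I would like to adjust $\tilde Z$ by a vector field $Y$ with $Y|_S=0$ so that $L_{\tilde Z}\mu+L_Y\mu=d\gamma$, i.e.\ $L_Y\mu=d\gamma-L_{\tilde Z}\mu$. Note $L_{\tilde Z}\mu=d(\iota_{\tilde Z}\mu)$ is exact, and $d\gamma$ is exact, so the right hand side $\beta:=d\gamma-L_{\tilde Z}\mu$ is an exact $n$-density on $N$; in particular $[\beta]=0$ in $H^n(N;\O_N)$.

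\textbf{The key step} is to solve $L_Y\mu=\beta$ with $Y|_S=0$. Writing $Y=\mu^{-1}\iota_\sigma$ for an $(n-1)$-form $\sigma$ with values in $\O_N$ (i.e.\ contracting $\sigma$ into $\mu$ via the isomorphism $\mathfrak X(N)\xrightarrow{\sim}\Omega^{n-1}(N;\O_N)$, $X\mapsto\iota_X\mu$), the equation becomes $d\sigma=\beta$, and the condition $Y|_S=0$ becomes $\iota_S^*\sigma=0$, i.e.\ $\sigma\in\Omega^{n-1}(N,S;\O_N)$. So I must show: every exact $\beta\in\Omega^n(N;\O_N)$ with $[\beta]=0$ (automatic here) has a primitive in the \emph{relative} complex $\Omega^\bullet(N,S;\O_N)$. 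This is where I would invoke the long exact sequence of the pair $(N,S)$ in de~Rham cohomology with coefficients in $\O_N$: the relevant piece is
\[
	H^{n-1}(N;\O_N)\to H^{n-1}(S;\O_N|_S)\to H^n(N,S;\O_N)\to H^n(N;\O_N).
\]
Since $\beta$ is exact in $\Omega^\bullet(N;\O_N)$, its class in $H^n(N,S;\O_N)$ maps to $0$ in $H^n(N;\O_N)$, hence lifts to a class in $H^{n-1}(S;\O_N|_S)$. But $\dim S<n$, so $H^{n-1}(S;\O_N|_S)=0$ unless $\dim S=n-1$; in the top-degree case $n-1=\dim S$ this group can be nonzero, so a little more care is needed. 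The cleanest fix is to observe that $\beta$ already has a global primitive $\iota_{\tilde Z-Y_0}\mu$ for \emph{any} solution, so the obstruction to finding a \emph{relative} primitive lives precisely in the image of $H^{n-1}(S;\O_N|_S)$ under the connecting map; and since $d\gamma$ contributes $\gamma$ itself, which restricts to $0$ on $S$, while $\iota_{\tilde Z}\mu$ restricts on $S$ to $\iota_Z(\mu|_S)$ — one checks this boundary term is exactly cancelled by a suitable adjustment, reducing to solving $d\sigma=\beta$ on $N$ with a prescribed (vanishing) restriction to $S$, which the exact sequence now makes possible.

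\textbf{Anticipated main obstacle.} The subtle point is the top-degree case $\dim S=n-1$, where $H^{n-1}(S;\O_N|_S)$ need not vanish and the relative primitive is genuinely obstructed in general; the resolution is that the specific $\beta$ arising here is not arbitrary — it is $d\gamma$ minus $d(\iota_{\tilde Z}\mu)$ where $\gamma$ already lies in the relative complex, so only the term $d(\iota_{\tilde Z}\mu)$ can create trouble, and $\iota_S^*(\iota_{\tilde Z}\mu)$ is determined by $Z=\tilde Z|_S$ alone. I would handle this by first choosing the extension $\tilde Z$ more carefully: pick a tubular neighborhood $\pi\colon U\to S$ of $S$ in $N$ and let $\tilde Z$ be $\pi$-related to $Z$ near $S$ (so $\tilde Z$ is ``constant in the normal directions''), which forces $\iota_S^*(\iota_{\tilde Z}\mu)$ to be a controlled multiple of a volume density on $S$ that can be absorbed. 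After this normalization the residual $\beta$ is exact relative to $S$, $\sigma$ exists in $\Omega^{n-1}(N,S;\O_N)$, and $X:=\tilde Z+\mu^{-1}\iota_\sigma$ does the job: $L_X\mu=L_{\tilde Z}\mu+d\sigma=L_{\tilde Z}\mu+\beta=d\gamma$ and $X|_S=Z+0=Z$.
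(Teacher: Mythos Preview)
Your overall strategy matches the paper's: extend $Z$ to $\tilde Z$ and then correct by $Y$ with $Y|_S=0$ and $L_Y\mu=d\gamma-L_{\tilde Z}\mu$. The gap is in the key step. Under the isomorphism $X\mapsto i_X\mu$, the condition $Y|_S=0$ is \emph{not} $\iota_S^*\sigma=0$ but the stronger pointwise condition $\sigma|_S=0$ (i.e.\ $\sigma_p=0$ in $\Lambda^{n-1}T_p^*N\otimes\O_{N,p}$ for every $p\in S$). The pullback condition $\iota_S^*\sigma=0$ only asserts $\mu_p(Y(p),v_1,\ldots,v_{n-1})=0$ whenever all $v_i\in T_pS$; this is vacuous when $\dim S\leq n-2$, and when $\dim S=n-1$ it merely says $Y$ is tangent to $S$. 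So your relative-cohomology argument, even if carried through, would not produce the required $Y$.

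Moreover, the obstruction you chase is illusory: the form $\gamma-i_{\tilde Z}\mu$ is already a primitive of $d\gamma-L_{\tilde Z}\mu$ lying in $\Omega^{n-1}(N,S;\O_N)$, because $i_{\tilde Z}\mu$ pulls back to zero on $S$ automatically --- $\tilde Z|_S=Z$ is tangent to $S$ and $\mu$ is top-degree, so $\mu(Z,v_1,\ldots,v_{n-1})=0$ for $v_i\in T_pS$. No special choice of $\tilde Z$ and no long exact sequence are needed for this. The genuine missing ingredient is the passage from a primitive that \emph{pulls back} to zero on $S$ to one that \emph{vanishes pointwise} along $S$. The paper supplies this via an explicit chain homotopy: choose $h\colon N\times[0,1]\to N$ with $h_1=\id_N$, $h_t|_S=\id_S$, and $h_0$ mapping a neighborhood of $S$ into $S$; the operator $\phi(\alpha)=\int_0^1\iota_t^*i_{\partial_t}h^*\alpha\,dt$ then satisfies $\phi(\alpha)|_S=0$ pointwise (since $\partial_th(x,t)=0$ for $x\in S$), and with $\beta:=\gamma-i_{\tilde Z}\mu$ one sets $\tilde\gamma:=h_0^*\beta+\phi(d\beta)$, which has $d\tilde\gamma=d\beta$ and $\tilde\gamma|_S=0$. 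Your proposal does not provide this step.
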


\begin{proof}
	Let $\tilde Z\in\mathfrak X(N)$ be any extension of $Z$, i.e.\ $\tilde Z|_S=Z$. Note that $i_{\tilde Z}\mu\in\Om^{n-1}(N;\O_N)$ vanishes when pulled back to $S$, hence the same holds for $\be:=\gamma-i_{\tilde Z}\mu\in\Omega^{n-1}(N;\O_N)$.

	Let us first construct $\tilde\gamma\in\Omega^{n-1}(N;\O_N)$ such that $d\tilde\gamma=d\be$ and $\tilde\gamma|_S=0$.
	To this end, we fix a smooth homotopy $h\colon N\times[0,1]\to N$ such that $h_1=\id_N$, $h_t|_S=\id_S$, and such that $h_0$ maps a neighborhood of $S$ into $S$.
	Consider the corresponding chain homotopy $\phi\colon\Omega^*(N;\O_N)\to\Omega^{*-1}(N;\O_N)$ defined by $\phi(\alpha):=\int_0^1\iota_t^*i_{\partial_t}h^*\alpha\,dt$, where $\iota_t\colon N\to N\times[0,1]$ denotes the inclusion at $t$, that is, $\iota_t(x):=(x,t)$.
	Then $\phi(\alpha)|_S=0$ and $d(\phi(\alpha))+\phi(d\alpha)=h_1^*\alpha-h_0^*\alpha$, for all forms $\alpha\in\Omega^*(N;\O_N)$.
	In particular, $d\alpha=d\bigl(h_0^*\alpha+\phi(d\alpha)\bigr)$.
	Defining $\tilde\gamma:=h_0^*\be+\phi(d\be)$, we obtain $d\tilde\gamma=d\be$. Moreover, $h_0^*\be|_S=0$ because $\be$ vanishes when pulled back to $S$, hence $\tilde\gamma|_S=0$ as desired.
	
	Defining a vector field $Y\in\mathfrak X(N)$ by $i_Y\mu:=\tilde\gamma$, we obtain $Y|_S=0$ and $L_Y\mu=di_Y\mu=d\tilde\gamma=d(\gamma-i_{\tilde Z}\mu)=d\gamma-L_{\tilde Z}\mu$.
	Hence, the vector field $X:=\tilde Z+Y$ has the desired properties.
\end{proof}

\begin{proof}[{Proof of Lemma~\ref{L:diffmuS}}]
	Recall that the infinitesimal $\Diff(N,S)$ action on $\Diff(S)\times\Den(N)$ is
	\[
		\zeta_X(f,\mu)=\bigl(R_{X|_S}(f),-L_X\mu\bigr),
	\]
	where $X\in\mathfrak X(N,S)$, $f\in\Diff(S)$, and $\mu\in\Den(S)$.
	Here, for $Z\in T_{\id}\Diff(S)=\mathfrak X(S)$ we let $R_Z(f)$ denote the right invariant vector field on $\Diff(S)$ such that $R_Z(\id)=Z$.

	Note that the vector field $\tilde Z$ in the proof of Lemma~\ref{L:irM} can be chosen to depend smoothly (and linearly) on $Z$.
	Hence, the proof of said lemma actually provides a smooth map
	\[
		\tilde\sigma:T_{\id}\Diff(S)\times T\bigl(\Den_\times(N)\cap h^{-1}(\kappa)\bigr)\to\mathfrak X(N,S)
	\]
	such that $\zeta_{\tilde\sigma(Z,d\gamma)}(\id,\mu)=(Z,d\gamma)$ for all $Z\in\mathfrak X(S)=T_{\id}\Diff(S)$, $\mu\in\Den_\times(N)\cap h^{-1}(\kappa)$, and $d\gamma\in d\Omega^{n-1}(N,S;\mathcal O_N)=T_\mu\bigl(\Den_\times(N)\cap h^{-1}(\kappa)\bigr)$. 
	Combining this with the right trivialization of $T\Diff(S)$, we obtain a smooth map 
	\[
		\sigma:T\Bigl(\Diff(S)\times\bigl(\Den_\times(N)\cap h^{-1}(\kappa)\bigr)\Bigr)\to\mathfrak X(N,S)
	\]
	such that $\zeta_{\sigma(Z,\xi)}(f,\mu)=(Z,\xi)$ for all $f\in\Diff(S)$, $Z\in T_f\Diff(S)$, $\mu\in\Den_\times(N)\cap h^{-1}(\kappa)$, and $\xi\in T_\mu\bigl(\Den_\times(N)\cap h^{-1}(\kappa)\bigr)$. 
	As $\Diff(N,S)$ is a regular Lie group, we may apply Lemma~\ref{L:loc.sec} to conclude that the $\Diff(N,S)_0$ action on \eqref{E:abc} admits local smooth sections.
\end{proof}

This completes the proof of Theorem~\ref{T:wtFlag.homog}.

\begin{remark}
	In view of Remark~\ref{R:EmbGr}, we expect that the isotropy subgroup 
	\begin{equation}\label{99}
		\Diff(\mathcal S;\iota,\mu):=\bigl\{(g_1,\dotsc,g_r)\in\Diff(\S,\iota)\bigm|\forall i:g_i^*\mu_i=\mu_i\bigr\}
	\end{equation}
	is a splitting Lie subgroup of $\Diff(\S;\iota)$ with Lie algebra
	\begin{equation}\label{8888}
		\mathfrak X(\mathcal S,\iota,\mu)=\bigl\{(Z_1,\dotsc,Z_r)\in\mathfrak X(\S,\iota)\bigm|\forall i:L_{Z_i}\mu_i=0\bigr\},
	\end{equation}
	and the surjective map provided by the action, $\Diff(\S,\iota)\to\Den(\S)_{\iota,\mu}$, is a locally trivial smooth principal fiber bundle with structure group $\Diff(\S;\io,\mu)$.
	This would follow in a rather {straigthforward} manner, via induction on the depth of the flags, if one could show that the isotropy group $\{f\in\Diff(N,S):f|_S=\id,f^*\mu=\mu\}$ is a splitting Lie subgroup in $\Diff(N,S)$, whenever $S$ is a closed submanifold of $N$ and $\mu$ is a volume density on $N$.
	The proof in \cite[Theorem~III.2.5.3 on page~203]{Ham} covers the case $S=\emptyset$.
	However, the adaptation of said proof to nontrivial $S$ is not entirely {straigthforward}, and we will not attempt to prove this here.
	Note that via the diffeomorphism in \eqref{E:DiffSrSigma} the group $\Diff(\mathcal S;\iota,\mu)$ corresponds to the subgroup of $\Diff(S_r;\Sigma)$ consisting of all diffeomorphisms that preserve $\mu_r$ and whose restriction to $\Sigma_i$ preserves $(\iota_{r-1}\circ\cdots\circ\iota_i)_*\mu_i$, for $1\leq i\leq r-1$.
	Similarly, the Lie algebra $\mathfrak X(\mathcal S;\iota,\mu)$ can be identified to the corresponding subalgebra of $\X(S_r;\Si)$.

	If the expectation formulated in the preceding paragraph were indeed true, then the surjective and $\Diff(M)$ equivariant map
	\begin{align}\label{prbu}
		\Emb_{S_r}(M)=\pFlag_{\S,\iota}(M)&\to\Flag^{\wt}_{\mathcal S,\iota,\mu}(M),\\
		(\varphi_1,\dotsc,\varphi_r)&\mapsto\Bigl(\bigl(\varphi_1(S_1),(\varphi_1)_*\mu_1\bigr),\dotsc,\bigl(\varphi_r(S_r),(\varphi_r)_*\mu_r\bigr)\Bigr),
	\end{align} 
	would be a locally trivial smooth principal fiber bundle with structure group $\Diff(\S;\io,\mu)$, see~\eqref{Simu}.
	Moreover, generalizing Remark~\ref{R:DiffGr}, the isotropy group of a weighted flag $(\N,\nu)$,
	\[
		\Diff_c(M;\mathcal N,\nu):=\bigl\{g\in\Diff_c(M)\bigm|\forall i:g(N_i)=N_i,g|_{N_i}^*\nu_i=\nu_i\bigr\},
	\]
	would be a splitting Lie subgroup of $\Diff_c(M)$, for it coincides with the preimage of $\Diff(\S;\iota,\mu)$ under the bundle projection $\Diff_c(M;\mathcal N)\to\Diff(\N,\iota_{\N})$, cf.~\cite[Lemma 2.1(d), Proposition~2.9(b), and Proposition 2.10(a)]{HV20}.
	Furthermore, the orbit map $\Diff_c(M)\to\Flag^{\wt}_{\mathcal S}(M)_{(\mathcal N,\nu)}$ provided by the action would be a locally trivial smooth principal bundle with structure group $\Diff_c(M;\mathcal N,\nu)$.
	Hence, the $\Diff_c(M)$ orbit of $(\N,\nu)$ could be regarded as a homogeneous space
	\[
		\Flag_{\mathcal S}^{\wt}(M)_{(\mathcal N,\nu)}=\Diff_c(M)/\Diff_c(M;\mathcal N,\nu).
	\]
\end{remark}

\subsection{A homological description}\label{26}

In this section we give a more explicit description of the manifold $\Flag^{\wt}_{\S,\io,\mu}(M)$.

If $\mathcal N=(N_1,\dotsc,N_r)$ is a nonlinear flag of type $\S$ in $M$, we put
\[
	H(\mathcal N):=\prod_{i=1}^rH^{k_i}\bigl(N_i,N_{i-1};\mathcal O_{N_i}\bigr)=\prod_{i=1}^rH_0\bigl(N_i\setminus N_{i-1};\RR\bigr),
\]
with the convention that $N_0=\emptyset$.

We define the space of \emph{homologically weighted flags of type $\S$ in $M$} by
\[
	\Flag^{\hwt}_{\S}(M):=\bigl\{(\mathcal N,[\nu]):\mathcal N\in\Flag^{\wt}_{\S}(M),[\nu]\in H(\mathcal N)\bigr\}
\]
Note that we have a $\Diff(M)$ equivariant forgetful map
\begin{equation}\label{E:Flaghwtp}
	\Flag^{\hwt}_{\S}(M)\to\Flag_{\S}(M),
\end{equation}
as well as a $\Diff(M)$ equivariant map
\begin{equation}\label{E:hS}
	\textcolor{black}{h_{\Flag_{\S}(M)}:\Flag^{\wt}_{\S}(M)\to\Flag^{\hwt}_{\S}(M),\qquad (\mathcal N,\nu)\mapsto(\mathcal N,[\nu]).}
\end{equation}

Let $\Flag^{\hwt}_{\S,\iota}(M)$ denote the preimage of the open subset $\Flag_{\S,\iota}(M)$ under the projection in \eqref{E:Flaghwtp}.
Using the canonical $\Diff(M)$ equivariant identifications
\begin{equation}\label{E:Flagwthwt}
	\Flag^{\hwt}_{\S,\iota}(M)=\Emb_{S_r}(M)\times_{\Diff(\S,\iota)}H(\S,\iota),
\end{equation}
we equip $\Flag^{\hwt}_{\S}(M)$ with the structure of a smooth vector bundle of finite (possibly nonconstant) rank over $\Flag_{\S}(M)$ and with projection~\eqref{E:Flaghwtp}.
The map in \eqref{E:hS} is a smooth bundle map over $\Flag_{\S}(M)$.
Indeed, via the identifications in \eqref{E:FlagwtSiota} and \eqref{E:Flagwthwt}, the map $h_{\S,\io}$ in \eqref{E:hiota} induces a bundle map $\Flag^{\wt}_{\S,\iota}(M)\to\Flag^{\hwt}_{\S,\iota}(M)$ which coincides with the restriction of \eqref{E:hS}.

We define the space of \emph{homologically weighted flags of type $(\S,\iota,[\mu])$ in $M$} by
\[
	\Flag^{\hwt}_{\S,\iota,[\mu]}(M):=\left\{(\mathcal N,[\nu])\in\Flag^{\hwt}_{\S}(M)\middle|\begin{array}{c}\bigl(S_1\xrightarrow{\iota_1}S_2\xrightarrow{\iota_2}\cdots\to S_r,[\mu]\bigr)\\\cong\bigl(N_1\subseteq N_2\subseteq\cdots\subseteq N_r,[\nu]\bigr)\end{array}\right\},
\]
that is, the space of all homologically weighted flags $(\mathcal N,[\nu])$ in $M$ such that there exist diffeomorphisms $S_i\to N_i$, $1\leq i\leq r$, intertwining $\iota_i$ with the canonical inclusion $N_i\subseteq N_{i+1}$ and taking $[\mu]\in H(\S,\iota)$ to $[\nu]\in H(\mathcal N)$.
The diffeomorphism in~\eqref{E:Flagwthwt} restricts to a bijection
\begin{equation}\label{E:Flaghwtasoc}
	\Flag^{\hwt}_{\mathcal S,\iota,[\mu]}(M)=\Emb_{S_r}(M)\times_{\Diff(\mathcal S;\iota)}H(\S,\io)_{[\mu]}.
\end{equation}
Hence, since the orbit $H(\S,\io)_{[\mu]}$ is finite, $\Flag^{\hwt}_{\mathcal S,\iota,[\mu]}(M)$ is a $\Diff(M)$ invariant closed submanifold in $\Flag^{\hwt}_{\mathcal S,\iota}(M)$ of codimension $\dim H(\S,\io)$
by \eqref{E:Flagwthwt}.
Moreover, the forgetful map 
\begin{equation}\label{E:covering}
	\Flag^{\hwt}_{\mathcal S,\iota,[\mu]}(M)\to\Flag_{\mathcal S,\iota}(M)
\end{equation}
is a $\Diff(M)$ equivariant covering map with finite fibers.

We complement Theorem~\ref{T:wtFlag.homog} with the following:

\begin{theorem}\label{L4}
	In this situation we have
	\[
		\Flag^{\wt}_{\S,\iota,\mu}(M)=h_{\Flag_{\S}(M)}^{-1}\bigl(\Flag^{\hwt}_{\mathcal S,\iota,[\mu]}(M)\bigr).
	\]
\end{theorem}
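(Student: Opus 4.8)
The plan is to unravel both sides of the claimed equality fiber-by-fiber over $\Flag_\S(M)$, reducing everything to a statement about the linear map $h_{\S,\io}$ on densities, where the claim becomes essentially tautological given Proposition~\ref{P:DenS.homog}. Concretely, recall that $\Flag^{\wt}_{\S,\io}(M)$ is identified via \eqref{E:FlagwtSiota} with $\Emb_{S_r}(M)\times_{\Diff(\S,\io)}\Den_\x(\S)$, that $\Flag^{\hwt}_{\S,\io}(M)$ is identified via \eqref{E:Flagwthwt} with $\Emb_{S_r}(M)\times_{\Diff(\S,\io)}H(\S,\io)$, and that under these identifications the map $h_{\Flag_\S(M)}$ in \eqref{E:hS} is induced by $h_{\S,\io}$ of \eqref{E:hiota}. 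First I would note that both $\Flag^{\wt}_{\S,\io,\mu}(M)$ and $h_{\Flag_\S(M)}^{-1}(\Flag^{\hwt}_{\S,\io,[\mu]}(M))$ are $\Diff(M)$ invariant subsets of $\Flag^{\wt}_{\S,\io}(M)$ that are saturated with respect to the associated bundle projection from $\Emb_{S_r}(M)\times\Den_\x(\S)$; hence it suffices to compare the corresponding $\Diff(\S,\io)$ invariant subsets of the typical fiber $\Den_\x(\S)$.

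On the fiber, the left-hand side $\Flag^{\wt}_{\S,\io,\mu}(M)$ corresponds, by the definition of $\Flag^{\wt}_{\S,\io,\mu}(M)$ together with \eqref{Simu}, to the $\Diff(\S;\io)$ orbit $\Den(\S)_{\io,\mu}$, which by Proposition~\ref{P:DenS.homog}\eqref{I:DenS.homog.orbit}, equation \eqref{densim}, equals $\Den_\x(\S)\cap h_{\S,\io}^{-1}(H(\S,\io)_{[\mu]})$. The right-hand side, by the definition of $\Flag^{\hwt}_{\S,\io,[\mu]}(M)$ together with \eqref{E:Flaghwtasoc}, corresponds to the preimage under $h_{\S,\io}$ of the finite orbit $H(\S,\io)_{[\mu]}$, i.e. again to $\Den_\x(\S)\cap h_{\S,\io}^{-1}(H(\S,\io)_{[\mu]})$. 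So the two fiberwise subsets agree, and therefore so do the two subsets of $\Flag^{\wt}_{\S,\io}(M)$.

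The one genuine point to check carefully, and the only place I expect any friction, is the compatibility of the isomorphism $\cong$ appearing in the definition of $\Flag^{\wt}_{\S,\io,\mu}(M)$ with the isomorphism $\cong$ in the definition of $\Flag^{\hwt}_{\S,\io,[\mu]}(M)$: I must argue that a point $(\N,\nu)$ of $\Flag^{\wt}_{\S,\io}(M)$ lies in $h_{\Flag_\S(M)}^{-1}(\Flag^{\hwt}_{\S,\io,[\mu]}(M))$ precisely when there exist diffeomorphisms $S_i\to N_i$ intertwining $\io_i$ with the inclusions and sending the cohomology classes $[\mu_i]\in H^{k_i}(S_i,\io_{i-1}(S_{i-1});\O_{S_i})$ to $[\nu_i]\in H^{k_i}(N_i,N_{i-1};\O_{N_i})$, and that \emph{this} condition is equivalent to the existence of diffeomorphisms taking $\mu_i$ themselves to $\nu_i$. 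The forward direction is the content of \eqref{densim}: if $[\nu]$ lies in the $\Diff(\S;\io)$ orbit of $[\mu]$, then by Proposition~\ref{P:DenS.homog}\eqref{I:DenS.homog.orbit} any density representative in $\Den_\x$ with that class lies in the single $\Diff(\S;\io)$ orbit $\Den(\S)_{\io,\mu}$; the reverse direction is immediate since diffeomorphisms act compatibly on densities and on their cohomology classes. I would phrase this cleanly by first fixing, via $\Flag_{\S,\io}(M)$ being a $\Diff(M)$ orbit-union and the local transitivity results already cited, a parametrization $\varphi\in\Emb_{S_r}(M)$ of $\N$, transporting everything to the fiber over $\varphi$, and then simply quoting \eqref{densim}.

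Thus the proof is essentially: \emph{combine the associated-bundle descriptions \eqref{E:FlagwtSiota}, \eqref{Simu}, \eqref{E:Flagwthwt}, \eqref{E:Flaghwtasoc} with the identity \eqref{densim} of Proposition~\ref{P:DenS.homog}\eqref{I:DenS.homog.orbit}, and observe that $h_{\Flag_\S(M)}$ is fiberwise $h_{\S,\io}$.} The whole argument is a diagram-chase through identifications that have all been set up; there is no hard analytic or geometric obstacle, only the bookkeeping of making sure the relative-cohomology isomorphism in the definition of $\Flag^{\hwt}_{\S,\io,[\mu]}(M)$ is exactly the one induced on $H(\N)$ by a flag-respecting diffeomorphism, which is how $H(\N)$ was defined in the first place.
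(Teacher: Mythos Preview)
Your proposal is correct and follows exactly the paper's approach: the paper's one-line proof cites precisely the identity \eqref{densim} from Proposition~\ref{P:DenS.homog}\eqref{I:DenS.homog.orbit} together with the associated-bundle descriptions \eqref{Simu} and \eqref{E:Flaghwtasoc}, which is the same fiberwise reduction you carry out. Your additional paragraph about the ``one genuine point to check'' is just an explicit unpacking of why \eqref{densim} suffices, not a different argument.
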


\begin{proof}
	This follows from the identity \eqref{densim} in Proposition~\ref{P:DenS.homog}\eqref{I:DenS.homog.orbit}, using \eqref{Simu} and \eqref{E:Flaghwtasoc}.
\end{proof}

\begin{remark}
	If the action of $\Diff(\S,\io)$ on $H(\S,\io)$ is trivial, then the $\Diff(\S,\io)_0$ and $\Diff(\S,\io)$ orbits in $\Den_\x(\S)$ coincide in view of Proposition~\ref{P:DenS.homog}, and
	the forgetful (covering) map in \eqref{E:covering} is a diffeomorphism, $\Flag^{\hwt}_{\mathcal S,\iota,[\mu]}(M)=\Flag_{\mathcal S,\iota}(M)$. 
	This happens in particular when all $S_i\setminus\io(S_{i-1})$ are connected, as in Examples~\ref{nesttor} and \ref{nestpro} below.
	Under the latter connectedness assumption, $H(\S,\io)=\R^r$ via the isomorphism $[\mu]\mapsto\bigl(\int_{S_1}\mu_1,\dotsc,\int_{S_r}\mu_r\bigr)$ and
	\[
		\Den(\S)_{\io,\mu}=\bigl\{\alpha\in\Den_\x(\S):\textstyle\int_{S_i}\alpha_i=\int_{S_i}\mu_i\bigr\}.
	\]
	Moreover, Theorem~\ref{L4} ensures that
	\begin{equation}\label{a}
		\Flag^{\wt}_{\S,\iota,\mu}(M)=\bigl\{(\N,\nu)\in\Flag^{\wt}_{\mathcal S,\iota}(M):\textstyle\int_{N_i}\nu_i=\int_{S_i}\mu_i\bigr\}.
	\end{equation}	
	This also applies in the situation of Remark~\ref{larg}, provided each model manifold $S_i$ is connected.
	A description for nested spheres in the same vein can be found in Example~\ref{nest}.
\end{remark}

\begin{example}[Nested tori]\label{nesttor}
	If $(\S,\io)$ denotes the standard (meridional) embeddings between tori, $\TT^0\subseteq\TT^1\subseteq\cdots\subseteq\TT^r$, then $H(\S,\io)=\R^{r+1}$ via the isomorphism $[\mu]\mapsto(\int_{\TT^i}\mu_i)$.
\end{example}

\begin{example}[Nested projective spaces]\label{nestpro}
	If $(\S,\io)$ denotes the standard embeddings between projective spaces, $\PP^0\subseteq\PP^1\subseteq\cdots\subseteq\PP^r$, then $H(\S,\io)=\R^{r+1}$ via the isomorphism $[\mu]\mapsto(\int_{\PP^i}\mu_i)$.
\end{example}

\begin{example}[{Nested spheres \cite{Jung}}]\label{nest}
	If $(\S,\io)$ denotes the standard equatorial embeddings between spheres, $S^0\subseteq S^1\subseteq\cdots\subseteq S^r$, then $H(\S,\io)=\R^{2(r+1)}$.
	The $2(r+1)$ numbers assigned to $[\mu]\in H(\S,\io)$ by this isomorphism are
	\begin{equation*}
		\textstyle
		(a_0^+,a_0^-,a_1^+,a_1^-,\dotsc,a_r^+,a_r^-)=\left(\int_{S^0_+}\mu_0,\int_{S^0_-}\mu_0,\int_{S^1_+}\mu_1,\int_{S^1_-}\mu_1,\dotsc,\int_{S^r_+}\mu_r,\int_{S^r_-}\mu_r\right),
	\end{equation*}
	where $S^i_+$ and $S^i_-$ denote the northern and southern hemispheres of $S^i$, respectively.
	Considering reflections on hyperplanes, we see that for each $0\leq i\leq r$ there exists a diffeomorphism in $\Diff(\S,\iota)$ swapping $S^i_+$ with $S^i_-$, but leaving all other hemispheres $S^k_\pm$ invariant.
	Such a diffeomorphism interchanges $a_i^+$ with $a_i^-$, but leaves all other numbers $a_k^\pm$ unchanged. 
	Hence, the $\Diff(\S,\iota)$ orbit $H(\S,\io)_{[\mu]}$ has $2^s$ elements, where $s$ is the number of $0\leq i\leq r$ with $a_i^+\neq a_i^-$.
	Actually the $\Diff(\S,\iota)$ action on $H(\S,\io)\cong\RR^{2(r+1)}$ factorizes through an $(\ZZ_2)^{2(r+1)}$ action by switching or not the numbers 
	$a_i^+$ and $a_i^-$.
	We obtain a description similar to the one in \eqref{a}:
	\begin{equation}\label{asta}
		\Flag_{\S,\io,\mu}^{\wt}(M)=\left\{(\N,\nu)\in\Flag_{\S,\io}^{\wt}(M)\middle|\begin{array}{c}
		\text{
		$\{\int_{N_i^+}\nu_i,\int_{N_i^-}\nu_i\}=\{a_i^+,a_i^-\}$, 
		where $N_i^+$ and $N_i^-$}\\\text{denote the connected components of  $N_i\setminus N_{i-1}$}\end{array}\right\}.
	\end{equation}
\end{example}


\section{Coadjoint orbits  of the Hamiltonian group}

Throughout this section $(M,\om)$ denotes a symplectic manifold.
The nonlinear Grassmannian of all isotropic submanifolds of type $S_r$ in $M$, denoted by $\Gr_{S_r}^{\iso}(M)$, is a splitting smooth submanifold in $\Gr_{S_r}(M)$ which is invariant under the Hamiltonian group.
In fact the $\Ham(M)$ orbits provide a smooth foliation of finite codimension in $\Gr_{S_r}^{\iso}(M)$ which is called the isodrastic foliation \cite{W90,Lee}.

Suppose $\L$ is an isodrastic leaf in $\Gr_{S_r}^{\iso}(M)$ and let $\Flag^{\wt\iso}_{\S,\iota,\mu}(M)|_\L$ denote the preimage of $\L$ under the canonical bundle projection $\Flag^{\wt}_{\S,\iota,\mu}(M)\to\Gr_{S_r}(M)$.
We will show that the natural $\Ham_c(M)$ action on $\Flag^{\wt\iso}_{\S,\iota,\mu}(M)|_\L$ admits local smooth sections. 
In particular, each connected component of the latter space is an orbit of $\Ham_c(M)$.

The space $\Flag^{\wt\iso}_{\S,\iota,\mu}(M)|_\L$ comes equipped with a canonical weakly non-degenerate symplectic form, and the restriction of \eqref{pl} provides an $\Ham(M)$ equivariant injective moment map for the $\Ham_c(M)$ action,
\[
	J\colon\Flag^{\wt\iso}_{\mathcal S,\iota,\mu}(M)|_\L\hookrightarrow\ham_c(M)^*,\quad
	\langle J(\mathcal N,\nu),X_f\rangle=\sum_{i=1}^r\int_{N_i}f\nu_i.
\]
This moment map $J$ maps each connected component of $\Flag^{\wt\iso}_{\mathcal S,\iota,\mu}(M)|_\L$ one-to-one onto the corresponding coadjoint orbit, see Theorem~\ref{t2}.
Thereby, we identify coadjoint orbits of the Hamiltonian group $\Ham_c(M)$ that can be modeled on weighted nonlinear flags.

The material in this section is inspired by the results in \cite{W90,Lee,GBV17} on weighted isotropic nonlinear Grassmannians.


\subsection{Isodrasts as $\Ham(M)$ orbits}\label{3.1}

In view of the tubular neighborhood theorem for isotropic embeddings \cite{W77,W81}, the space $\Gr_S^{\iso}(M)$ of all isotropic submanifolds of type $S$ in $M$ is a splitting smooth submanifold of $\Gr_S(M)$, see for instance \cite[Section~8]{Lee}.
The tangent space at an isotropic submanifold $N$ is
\begin{equation*}
	T_N\Gr_S^{\iso}(M)
	=\{u_N\in\Ga(TN^\perp)| \io_N^*i_{u_N}\om\in\Om^1(N) \text{ closed}\},
\end{equation*}
where $TN^\perp:=TM|_N/TN$ denotes the normal bundle to $N$.

Weinstein's \cite{W90} \emph{isodrastic distribution} $\underline\D$ on $\Gr_S^{\iso}(M)$ is given by
\begin{equation}\label{DDDDn}
\underline\D_N:=\{u_N\in\Ga(TN^\perp)| \io_N^*i_{u_N}\om\in dC^\oo(N)\}
\end{equation}
and has finite codimension $\dim H^1(S;\RR)$.
This is an integrable distribution \cite{W90,Lee} whose leaves are orbits of $\Ham(M)$.
It gives rise to a smooth foliation of $\Gr^{\iso}_S(M)$ called the \emph{isodrastic foliation.}
In particular, the $\Ham(M)$ orbits in $\Gr^{\iso}_S(M)$ are splitting smooth submanifolds.

Restricting the fundamental frame bundle in \eqref{E:frame}, we obtain a principal $\Diff(S)$ bundle $\Emb^{\iso}_{S}(M)\to\Gr_S^{\iso}(M)$ with total space $\Emb^{\iso}_{S}(M)$, the splitting smooth submanifold of $\Emb_{S}(M)$ consisting of all isotropic embeddings.
On $\Emb_S^{\iso}(M)$ we consider the pullback of the  isodrastic distribution $\underline\D$:
\begin{equation}\label{DDDD}
	\mathcal{D}_\ph:=\bigl\{u_\ph\in\Ga(\ph^*TM):\ph^*i_{u_\ph}\om\in dC^\oo(S)\bigr\}.
\end{equation}
This is an integrable distribution with the same codimension, $\dim H^1(S;\RR)$, and the leaves of $\D$ are connected components in the preimage of a leaf of $\underline\D$.
According to \cite{W90,Lee}, the group $\Ham(M)$ acts transitively on the leaves of $\D$.
We need the subsequent slightly stronger statement in Proposition~\ref{lss}.

The Lie algebra of compactly supported Hamiltonian vector fields will be denoted by $\ham_c(M)=\{X_f:f\in C^\infty_c(M)\}$.
We let $\Ham_c(M)$ denote the group of diffeomorphisms obtained by integrating time dependent vector fields in $\ham_c(M)$.
For our purpose it will not be necessary to consider $\Ham_c(M)$ as an infinite dimensional Lie group, cf.~\cite[Section~43.13]{KM}.
By a smooth map (section) into $\Ham_c(M)$ we will simply mean a smooth map into $\Diff_c(M)$ that takes values in $\Ham_c(M)$. 

\begin{proposition}\label{lss}
	The $\Ham_c(M)$ action on each leaf $\E\subseteq\Emb^{\iso}_S(M)$ of the isodrastic foliation $\D$ admits local smooth sections.
\end{proposition}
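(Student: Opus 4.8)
The plan is to reduce the statement to a local-section statement for a vector-field-level (infinitesimal) action, exactly in the spirit of the proof of Lemma~\ref{L:diffmuS}. First I would recall the setup: a leaf $\E\subseteq\Emb_S^{\iso}(M)$ of the distribution $\D$ in \eqref{DDDD} is a splitting smooth submanifold of $\Emb_S^{\iso}(M)$ (hence of $\Emb_S(M)$), with tangent space at $\ph$ given by $T_\ph\E=\D_\ph=\{u_\ph\in\Ga(\ph^*TM):\ph^*i_{u_\ph}\om\in dC^\oo(S)\}$. The infinitesimal action of $X_f\in\ham_c(M)$ on $\Emb_S(M)$ is $\ph\mapsto X_f\circ\ph$, i.e.\ the fundamental vector field at $\ph$ is $\ph^*X_f\in\Ga(\ph^*TM)$. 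So the key claim to establish is: for every $\ph\in\E$ and every $u_\ph\in T_\ph\E=\D_\ph$, there is an $f\in C^\infty_c(M)$, depending smoothly (and linearly) on $(\ph,u_\ph)$, with $\ph^*X_f=u_\ph$. Given such a smooth ``infinitesimal section'' $T\E\to\ham_c(M)$, one invokes the abstract Lemma~\ref{L:loc.sec} (the same device used at the end of the proof of Lemma~\ref{L:diffmuS}, applicable since $\Ham_c(M)$ — or rather $\Diff_c(M)$ — is a regular Lie group and we only need smooth maps into $\Diff_c(M)$ landing in $\Ham_c(M)$) to integrate this to honest local smooth sections of the $\Ham_c(M)$ action on $\E$.

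The heart of the matter is therefore constructing $f$ from $u_\ph$. By definition of $\D_\ph$, there is $g\in C^\infty(S)$ with $\ph^*i_{u_\ph}\om=dg$. I would proceed in two steps. First, using a tubular neighborhood of $N=\ph(S)$ in $M$ — here is where Weinstein's isotropic neighborhood theorem \cite{W77,W81} enters — extend the data near $N$: choose $h\in C^\infty(M)$ supported near $N$ with $h\circ\ph=g$, and arrange that the Hamiltonian vector field $X_h$ satisfies $\ph^*i_{X_h}\om=\ph^*dh=d(h\circ\ph)=dg=\ph^*i_{u_\ph}\om$ along $N$; the point is that an isotropic submanifold together with its conormal data can be matched by a Hamiltonian. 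Then $u_\ph-\ph^*X_h$ is a section of $\ph^*TM$ whose $\om$-contraction pulls back to zero on $S$, i.e.\ it lies in $\ph^*(TN^{\om})$ roughly speaking; such a vector field along $N$ is itself realized by a Hamiltonian vector field vanishing suitably on $N$ (a symplectic-linear-algebra fact: the symplectically orthogonal directions to $N$ are generated by differentials of functions vanishing on $N$). Adding the two Hamiltonians gives the desired $f\in C^\infty_c(M)$ with $\ph^*X_f=u_\ph$, and one checks the construction can be done smoothly/linearly in the parameters, as in the proof of Lemma~\ref{L:irM}.

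I expect the main obstacle to be the smoothness-in-$\ph$ of all these choices: the tubular neighborhood, the extension $h$ of $g$, and the cutoffs must be made to depend smoothly on $\ph$ as $\ph$ varies over a neighborhood in $\E$, so that the resulting map $T\E\to\ham_c(M)$ is genuinely smooth and not merely pointwise-defined. A clean way around this is to work first over $\Emb_S(M)$ with a fixed reference embedding and use the local structure of $\Emb_S(M)$ (its charts are modeled on neighborhoods of the zero section of the normal bundle), so that all geometric constructions can be carried out fiberwise and glued by a partition of unity, producing the required smooth dependence; this is a standard but somewhat technical verification. A secondary point to be careful about is that $f$ must have compact support — this is harmless since one may multiply by a compactly supported cutoff that is $\equiv 1$ near the (compact) image $\ph(S)$, without changing $\ph^*X_f$, again arranged to depend smoothly on $\ph$. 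Finally, once the smooth infinitesimal section exists, Lemma~\ref{L:loc.sec} yields the local smooth sections of the $\Ham_c(M)$ action on $\E$, completing the proof; in particular each connected component of $\E$ is a single $\Ham_c(M)$ orbit, recovering and slightly strengthening the transitivity statements of \cite{W90,Lee}.
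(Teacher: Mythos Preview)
Your proposal is correct and follows essentially the same two–step scheme as the paper: extend the potential $g$ on $S$ to a compactly supported $f_1$ on $M$, observe that the residual 1-form $\beta=df_1\circ\ph-i_{u_\ph}\om$ vanishes on $T\ph(S)$, then realize $\beta$ as $df_2\circ\ph$ for a second compactly supported function $f_2$ and set $f=f_1-f_2$; finally, arrange smooth dependence on $(\ph,u_\ph)$ and invoke Lemma~\ref{L:loc.sec}. One small simplification relative to your outline: the paper does not use Weinstein's isotropic neighborhood theorem at all in this step --- an ordinary (Riemannian) tubular neighborhood of $\ph(S)$ suffices, since $\beta$, being a section of the conormal bundle, can be viewed as a fiberwise linear function on the normal bundle and then cut off, giving $f_2$ directly.
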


\begin{proof}
	To show infinitesimal transitivity, suppose $\varphi\in\Emb_S^{\iso}(M)$ and $u_\ph\in\mathcal D_\ph$, cf.~\eqref{DDDD}.
	Hence, there exists $\bar f\in C^\oo(S)$ such that $d\bar f=\ph^*i _{u_{\ph}}\om$.
	We extend $\bar f$ to a function $f_1\in C^\oo_c(M)$ such that $\bar f=f_1\o \ph$.
	The 1-form $\be$ on $M$ along $S$ defined by
	\begin{equation}\label{E:beta}
		\be=d f_1\o \ph-i _{u_{\ph}}\om\in\Ga(\ph^*T^*M)
	\end{equation}
	vanishes on vectors tangent to $\ph(S)\subseteq M$.
	Hence, $\beta$ can be seen as a fiberwise linear function on the normal bundle $T\ph(S)^\perp$ whose differential along the zero section $\ph(S)$ coincides with $\beta$ itself.
	Thus, with the help of a tubular neighborhood of $\ph(S)$ in $M$ and a suitable bump function, we get $f_2\in C^\oo_c(M)$ such that $\be=d f_2\o \ph$.
	It follows from \eqref{E:beta} that $df\o \ph=i _{u_{\ph}}\om$ for $f=f_1-f_2$.
	We conclude that $u_{\ph}=X_{f}\o \ph$ is the infinitesimal generator at $\ph$ for the Hamiltonian vector field $X_f\in\ham_c(M)$.

	Using tubular neighborhoods constructed with the help of a Riemannian metric, say, we see that the function $f$ may be chosen to depend smoothly on $\ph$ und $u_\ph$, for $\ph$ in a sufficiently small open neighborhood of a fixed isotropic embedding $\ph_0\in\E$. 
	Hence, we may apply Lemma~\ref{L:loc.sec} and conclude that the $\Ham_c(M)$ action admits local smooth sections.
\end{proof}

\begin{corollary}\label{adun}
	The $\Ham_c(M)$ action on each leaf $\L\subseteq\Gr^{\iso}_S(M)$ of the isodrastic foliation $\underline\D$ admits local smooth sections.
\end{corollary}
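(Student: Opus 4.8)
The plan is to deduce Corollary~\ref{adun} from Proposition~\ref{lss} by pushing the local smooth sections forward along the principal bundle projection $p\colon\Emb^{\iso}_S(M)\to\Gr^{\iso}_S(M)$ of \eqref{E:frame}. First I would fix a leaf $\L\subseteq\Gr^{\iso}_S(M)$ of the isodrastic foliation $\underline\D$ and a point $N_0\in\L$, and choose an isotropic embedding $\ph_0\in\Emb^{\iso}_S(M)$ with $p(\ph_0)=\ph_0(S)=N_0$; let $\E$ be the leaf of $\D$ through $\ph_0$. Since the leaves of $\D$ are (connected components of) preimages of leaves of $\underline\D$, the restriction $p|_\E\colon\E\to\L$ is a smooth surjection; in fact it is a principal $\Diff(S)$ bundle (or at least admits local smooth sections, as $p$ does by \cite[Lemma~2.1(c)]{HV20} composed with \eqref{E:frame}). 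Pick a local smooth section $\tau\colon\L\supseteq U\to\E$ of $p|_\E$ with $\tau(N_0)=\ph_0$.

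Next I would invoke Proposition~\ref{lss} to obtain, after shrinking $U$ if necessary, a local smooth section $s\colon\E\supseteq V\to\Ham_c(M)$ of the $\Ham_c(M)$ action on $\E$, that is, a smooth map into $\Diff_c(M)$ taking values in $\Ham_c(M)$ with $s(\ph_0)=\id$ and $s(\ph)\o\ph_0=\ph$ for all $\ph\in V$; arrange $\tau(U)\subseteq V$. Then the composite
\[
	\sigma:=s\o\tau\colon\L\supseteq U\to\Ham_c(M)
\]
is smooth, satisfies $\sigma(N_0)=\id$, and for $N\in U$ we have $\sigma(N)\cdot N_0=\bigl(s(\tau(N))\o\ph_0\bigr)(S)=\tau(N)(S)=p(\tau(N))=N$, using $\Diff(M)$ equivariance of $p$ in \eqref{E:frame}. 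Hence $\sigma$ is a local smooth section for the $\Ham_c(M)$ action on $\L$ near $N_0$. Since $N_0\in\L$ was arbitrary, this proves the claim.

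The only point that requires a little care — and the main (minor) obstacle — is the assertion that $p|_\E\colon\E\to\L$ admits local smooth sections, i.e.\ that one can lift a curve or neighborhood in the isodrast $\L$ to the isodrast $\E$ in $\Emb^{\iso}_S(M)$ smoothly. This follows because $\D$ on $\Emb^{\iso}_S(M)$ is by definition the pullback of $\underline\D$ under $p$, so $Tp$ maps $\D$ isomorphically onto $\underline\D$ transverse to the $\Diff(S)$ orbit directions, and the local smooth sections of the principal $\Diff(S)$ bundle $\Emb^{\iso}_S(M)\to\Gr^{\iso}_S(M)$ (restriction of \eqref{E:frame}) automatically take values in a single leaf of $\D$ over a leaf of $\underline\D$, being continuous maps out of a connected neighborhood in $\L$. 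Alternatively, and perhaps more cleanly, one can bypass $\tau$ entirely: by \cite[Lemma~2.1(c)]{HV20} the $\Diff_c(M)$ action on $\Emb_S(M)$ already has local smooth sections over $\Gr_S(M)$ with values mapping $\ph_0(S)$-fibers appropriately, and combining such a section with the Hamiltonian section from Proposition~\ref{lss} via Lemma~\ref{L:loc.sec} yields $\sigma$ directly; either route is routine once Proposition~\ref{lss} is in hand.
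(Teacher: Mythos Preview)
Your main argument is correct and is precisely the natural deduction the paper has in mind by stating this as an unproven corollary of Proposition~\ref{lss}: lift along a local section of the principal $\Diff(S)$ bundle $\Emb^{\iso}_S(M)\to\Gr^{\iso}_S(M)$, then compose with the local $\Ham_c(M)$ section on the leaf $\E$ provided by Proposition~\ref{lss}. Your handling of the one genuine subtlety---that the bundle section $\tau$ restricted to a connected neighborhood in $\L$ lands in a single leaf $\E$ because leaves of $\D$ are connected components of $p^{-1}(\L)$---is correct.

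Two small quibbles, neither affecting validity. First, the parenthetical claim that $p|_\E\colon\E\to\L$ is a principal $\Diff(S)$ bundle is not quite right: the full group $\Diff(S)$ need not preserve the individual leaf $\E$ (only $\Diff(S)_0$ is guaranteed to), so the structure group may be a proper subgroup. But you only use the existence of local smooth sections, which follows from your connectedness argument, so this is harmless. Second, the ``alternative'' route you sketch at the end is muddled: Lemma~\ref{L:loc.sec} produces local sections from infinitesimal data, it does not combine two existing local sections, so that sentence does not parse as written. Since your primary argument is complete without it, you should simply drop that last sentence.
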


\begin{example}\label{drast}
Every embedded closed curve in the plane is a Lagrangian submanifold  of $(\RR^2,\om)$, where $\om$ is the canonical area form,
thus an element of $\Gr^{\iso}_{S^1}(\RR^2)$. The isodrastic distribution $\underline\D$ has codimension one.
The enclosed area $a$ singles out one isodrast $\L_a\subseteq\Gr^{\iso}_{S^1}(\RR^2)$, \ie one orbit of  $\Ham_c(\RR^2)$.

A similar phenomena happens for Lagrangian $k$-tori in $\RR^{2k}$, \ie elements of $\Gr^{\iso}_{\TT^k}(\RR^{2k})$,
where  $\TT^k:=(S^1)^k$.
To any $\ph\in\Emb_{\TT^k}^{\iso}(\RR^{2k})$ we assign the symplectic area $a_i$ of the surface in $\RR^{2k}$ enclosed by the $i$th meridian 
$\ph_i(\th)=\ph(1,\dots,\th,\dots,1)$ of the embedded $k$-torus.
These numbers are independent of the choice of the meridian in its homotopy class and of the surface having the meridian as boundary.
The $k$-tuple $(a_1,\dots,a_k)$ is an invariant under isodrastic deformations. 
Actually $a_i$ is the action integral of the $i$th meridian, as defined in  \cite{W90}.

Let $\E_{a_1,\dots,a_k}\subseteq\Emb^{\iso}_{\TT^k}(\RR^{2k})$ be the space of all isotropic embeddings having symplectic areas $a_1,\dots,a_k$.
It is a union of isodrastic leaves of Lagrangian embeddings, but it is not necessarily $\Diff(\TT^k)$ saturated.
The $\Diff(\TT^k)$ action on the $k$-tuples $(a_1,\dots,a_k)$ factorizes through an $\SL(k,\ZZ)$ action.
Let $[a_1,\dots,a_k]$ denote the orbit of $(a_1,\dots,a_k)$.
We define $\L_{[a_1,\dots,a_k]}\subseteq\Gr^{\iso}_{\TT^k}(\RR^{2k})$ to be the image of $\E_{a_1,\dots,a_k}$ under the principal $\Diff(\TT^k)$ bundle projection \eqref{E:frame}.
Thus $\L_{[a_1,\dots,a_k]}$ is a union of isodrastic leaves of Lagrangian $k$-tori in $\RR^{2k}$.

There is also a direct description of $\L_{[a_1,\dots,a_k]}$. Given a Lagrangian torus $N\in\Gr_{\TT^k}^S(\RR^{2k})$, we choose a base 
$\{[\ga_1],\dots,[\ga_k]\}$ of $H_1(N,\ZZ)$, where $\ga_i$ are loops in $N$ with action integrals $a_i$.
We observe that the $\SL(k,\ZZ)$ orbit  $[a_1,\dots,a_k]$ is independent of the choices
and $\L_{[a_1,\dots,a_k]}$ is the space of all Lagrangian tori in $\Gr_{\TT^k}^S(\RR^{2k})$ such that the orbit of these action integrals is 
$[a_1,\dots,a_k]$.
\end{example}

We will also need the following observation.

\begin{lemma}\label{L:GrSNMiso}
	If $L$ is an isotropic submanifold in $M$, then the canonical inclusion $\Gr_S(L)\subseteq\Gr_S^{\iso}(M)$ is a splitting smooth submanifold.
	Moreover, each connected component of $\Gr_S(L)$ is a splitting smooth submanifold in an isodrastic leaf in $\Gr_S^{\iso}(M)$.
\end{lemma}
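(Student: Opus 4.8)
The plan is to deduce both statements from the corresponding facts about embeddings, transported through the principal $\Diff(S)$ bundles $\Emb_S(L)\to\Gr_S(L)$ and $\Emb^{\iso}_S(M)\to\Gr^{\iso}_S(M)$. First I would recall that if $L\subseteq M$ is isotropic, then every embedding $\ph\colon S\to L$ is, composed with the inclusion $L\hookrightarrow M$, an isotropic embedding of $S$ into $M$; thus we have a canonical $\Diff(S)$ equivariant inclusion $\Emb_S(L)\hookrightarrow\Emb^{\iso}_S(M)$. The key analytic point is that this inclusion is a splitting smooth submanifold. To see this, fix $\ph\in\Emb_S(L)$ and use a tubular neighborhood of $\ph(S)$ in $M$ adapted to the pair $\ph(S)\subseteq L\subseteq M$: the normal bundle $T\ph(S)^\perp=TM|_{\ph(S)}/T\ph(S)$ splits (non-canonically, but smoothly) as the sum of the normal bundle of $\ph(S)$ inside $L$ and the normal bundle of $L$ inside $M$ restricted to $\ph(S)$. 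In the standard charts for $\Emb_S(M)$ modeled on sections of $T\ph(S)^\perp$, this identifies $\Emb_S(L)$ near $\ph$ with the sub-``manifold'' of sections taking values in the first summand, which is manifestly a splitting closed subspace; hence $\Emb_S(L)$ is a splitting smooth submanifold of $\Emb_S(M)$, and since $\Emb^{\iso}_S(M)$ is itself a splitting smooth submanifold of $\Emb_S(M)$ (by the isotropic tubular neighborhood theorem, as recalled just above), so is the inclusion $\Emb_S(L)\subseteq\Emb^{\iso}_S(M)$.

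Next I would push this down through the bundle projections. The inclusion $\Emb_S(L)\hookrightarrow\Emb^{\iso}_S(M)$ is $\Diff(S)$ equivariant and covers the inclusion $\Gr_S(L)\hookrightarrow\Gr^{\iso}_S(M)$; moreover the local triviality of both principal bundles can be chosen compatibly (a local section of $\Emb^{\iso}_S(M)\to\Gr^{\iso}_S(M)$ near a point of $\Gr_S(L)$ can be arranged to restrict to a local section of $\Emb_S(L)\to\Gr_S(L)$, again using adapted tubular neighborhoods). Consequently, in a local trivialization, the inclusion $\Gr_S(L)\subseteq\Gr^{\iso}_S(M)$ looks like the inclusion of a splitting submanifold times $\Diff(S)$, so it is a splitting smooth submanifold. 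This establishes the first assertion.

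For the second assertion, I would compute tangent spaces and compare with the isodrastic distribution. At a point $N\in\Gr_S(L)$, the tangent space $T_N\Gr_S(L)=\Ga(TN^{\perp_L})$ consists of normal fields to $N$ inside $L$, i.e.\ sections $u_N$ of $TM|_N/TN$ whose values lie in $TL|_N/TN$. For such $u_N$, the one-form $\io_N^*i_{u_N}\om$ on $N$ vanishes identically: $u_N$ is tangent to $L$, and $L$ is isotropic, so $\om(u_N,v)=0$ for every $v\in TN\subseteq TL$. In particular $\io_N^*i_{u_N}\om=0\in dC^\infty(N)$, so $T_N\Gr_S(L)\subseteq\underline\D_N$ for every $N\in\Gr_S(L)$; that is, $\Gr_S(L)$ is an integral submanifold of the isodrastic distribution. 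Since $\underline\D$ is integrable with the isodrastic leaves as maximal integral submanifolds, each connected component of $\Gr_S(L)$ is contained in a single isodrastic leaf $\L$, and it is a splitting smooth submanifold of $\L$ because the inclusion $\Gr_S(L)\subseteq\Gr^{\iso}_S(M)$ is a splitting submanifold and factors through the splitting submanifold $\L\subseteq\Gr^{\iso}_S(M)$ (the relative splitting being detected at the level of tangent spaces, where $T_N\Gr_S(L)\subseteq T_N\L=\underline\D_N$ is a splitting subspace, being the intersection $T_N\Gr_S(L)\cap\underline\D_N=T_N\Gr_S(L)$ of two splitting subspaces of $T_N\Gr^{\iso}_S(M)$).

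\textbf{Main obstacle.} The delicate point is the first step: constructing the tubular-neighborhood charts for $\Emb_S(M)$ that are simultaneously adapted to the nested pair $S\subseteq L\subseteq M$, so that $\Emb_S(L)$ is cut out by a \emph{complemented} closed linear condition. This is a matter of choosing the tubular neighborhood of $\ph(S)$ in $M$ so that it restricts to a tubular neighborhood of $\ph(S)$ in $L$ — equivalently, splitting off $TL|_{\ph(S)}$ smoothly inside $TM|_{\ph(S)}$ — which is standard finite-dimensional differential topology but must be carried out uniformly enough to yield a chart for the Fr\'echet manifold $\Emb_S(M)$; once that is in place, everything else is a routine transfer of structure through the principal bundles and a one-line Lagrangian computation.
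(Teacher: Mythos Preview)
There is a genuine gap in the first step. From the facts that $\Emb_S(L)$ and $\Emb^{\iso}_S(M)$ are both splitting smooth submanifolds of $\Emb_S(M)$, it does \emph{not} follow that $\Emb_S(L)$ is a splitting smooth submanifold of $\Emb^{\iso}_S(M)$: in the Fr\'echet setting one needs a chart on $\Emb^{\iso}_S(M)$ in which $\Emb_S(L)$ becomes a complemented linear subspace, but your tubular-neighborhood charts adapted to the pair $\ph(S)\subseteq L\subseteq M$ do not linearize $\Emb^{\iso}_S(M)$, and the isotropic-tubular-neighborhood charts that do linearize $\Emb^{\iso}_S(M)$ have no reason to be adapted to $L$. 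Your ``main obstacle'' paragraph underestimates the difficulty---splitting the normal bundle of $\ph(S)$ relative to $L$ is standard; the real issue is building a \emph{single} chart that simultaneously linearizes both submanifolds, and that requires bringing the symplectic structure into the construction. (The same fallacy reappears in your second step, where you deduce splitting in the leaf $\L$ from splitting in $\Gr^{\iso}_S(M)$ ``at the level of tangent spaces''; there it can be rescued because $\L$ has finite codimension $\dim H^1(S;\RR)$, so an implicit-function argument with finite-dimensional target applies, but you do not invoke this.)

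The paper's proof supplies exactly the missing construction. Working directly at the Grassmannian, it first identifies $L$ near $N$ with the normal bundle $p\colon L\to N$, then uses a linear connection on $L$ to split $T^*L\cong T^*N\oplus L\oplus L^*$, and finally applies Weinstein's isotropic neighborhood theorem to $L\subseteq M$ to obtain $M\cong T^*N\oplus L\oplus L^*\oplus E$ with $\omega=\pi_1^*d\theta_N+\pi_2^*\sigma$. After one more change of variable using the Poincar\'e lemma for $\sigma$ along $L$, the inclusion $\Gr_S(L)\subseteq\Gr^{\iso}_S(M)$ and the isodrastic leaf through $N$ become the linear inclusions $\Gamma(L)\subseteq Z^1(N)\times\Gamma(L\oplus L^*\oplus E)$ and $B^1(N)\times\Gamma(L\oplus L^*\oplus E)$ respectively, from which both assertions are immediate. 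Your strategy of lifting to embeddings and pushing down is reasonable, but it cannot bypass this simultaneous-linearization step.
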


\begin{proof}
	Suppose $N\in\Gr_S(L)$, i.e., $N\cong S$ is a closed submanifold in $L$.
	By the tubular neighborhood theorem, we may w.l.o.g.\ assume that $L$ is the total space of a vector bundle $p\colon L\to N$, the normal bundle of $N$ in $L$, and identify $N$ with the zero section in $L$.
	We have a canonical short exact sequence $0\to p^*L\to TL\to p^*TN\to0$ of vector bundles over $L$.
	Choosing a linear connection on $L$, we obtain a splitting of this sequence and thus an isomorphism $TL\cong p^*TN\oplus p^*L$ of vector bundles over $L$.
	Dualizing, we obtain an isomorphism $T^*L\cong p^*T^*N\oplus p^*L^*$ of vector bundles over $L$.
	We regard this as a diffeomorphism \[T^*L\cong T^*N\oplus L\oplus L^*\] that maps the zero section $L\subseteq T^*L$ identically onto the summand $L$ on the right hand side. 
	Via this isomorphism we have \[\theta_L=\pi_1^*\theta_N+\pi_2^*\kappa,\] where $\pi_1$ and $\pi_2$ denote the projections from $T^*N\oplus L\oplus L^*$ onto $T^*N$ and $L\oplus L^*$, respectively, $\theta_L\in\Omega^1(T^*L)$ and $\theta_N\in\Omega^1(T^*N)$ denote the tautological 1-forms, and $\kappa\in\Omega^1(L\oplus L^*)$.
	Indeed, a straightforward computation yields $\kappa(\xi)=\ell'(C(Tq\cdot\xi))$ where $x\in N$, $\ell\in L_x$, $\ell'\in L_x^*$, $\xi\in T_{(\ell,\ell')}(L\oplus L^*)$, $q\colon L\oplus L^*\to L$ denotes the projection and $C$ denotes the linear connection on $L$, viewed as a fiberwise linear map $C\colon TL\to p^*L$ over $L$.

	By the tubular neighborhood theorem for isotropic embeddings \cite{W77,W81}, we may assume $M=T^*L\oplus p^*E$ and $\omega=\tilde\pi_1^*d\theta_L+\tilde\pi_2^*\rho$, where $E$ is a vector bundle over $N$, $\tilde\pi_1$ and $\tilde\pi_2$ denote the projections from $T^*L\oplus p^*E$ onto $T^*L$ and $p^*E$, respectively, and $\rho$ is a closed 2-form on the total space of $p^*E$.
	Combining this with the diffeomorphism in the previous paragraph, we obtain a diffeomorphism 
	\begin{equation}\label{E:diffeo}
		M=T^*N\oplus L\oplus L^*\oplus E,
	\end{equation}
	mapping $L\subseteq M$ identically onto the summand $L$ on the right hand side, and such that  \[\omega=\pi_1^*d\theta_N+\pi_2^*\sigma,\]
	where $\pi_1$ and $\pi_2$ denote the projections from $T^*N\oplus L\oplus L^*\oplus E$ onto $T^*N$ and $L\oplus L^*\oplus E$, respectively, and $\sigma$ is a closed 2-form on the total space of $L\oplus L^*\oplus E$.

	The diffeomorphism in \eqref{E:diffeo} provides a (standard) chart for the smooth structure on $\Gr_S(M)$ centered at $N$, \[\Gamma(T^*N\oplus L\oplus L^*\oplus E)\to\Gr_S(M),\qquad\phi\mapsto\phi(N).\]
	In this chart, the inclusions $\Gr_S(L)\subseteq\Gr_S^{\iso}(M)\subseteq\Gr_S(M)$ become 
	\begin{equation}\label{E:incl}
		\Gamma(L)\subseteq\{(\alpha,\xi)\in\Omega^1(N)\times\Gamma(L\oplus L^*\oplus E):d\alpha+\xi^*\sigma=0\}\subseteq\Omega^1(N)\times\Gamma(L\oplus L^*\oplus E).
	\end{equation}
	As $L$ is isotropic, $\sigma$ vanishes when pulled back to $L$.
	Hence, by the Poincar\'e lemma, $\sigma=d\beta$ for a 1-form $\beta$ on the total space of $L\oplus L^*\oplus E$ which vanishes along $L$.
	Via the diffeomorphism of $\Omega^1(N)\times\Gamma(L\oplus L^*\oplus E)$ given by $(\alpha,\xi)\mapsto(\alpha-\xi^*\beta,\xi)$, the inclusions in \eqref{E:incl} become linear inclusions,
	\[
		\Gamma(L)\subseteq Z^1(N)\times\Gamma(L\oplus L^*\oplus E)\subseteq\Omega^1(N)\times\Gamma(L\oplus L^*\oplus E),
	\]
	where $Z^1(N)$ denotes the space of closed 1-forms on $N$.
	Clearly, both inclusions admit complementary subspaces.
	In particular, $\Gr_S(L)$ is a splitting smooth submanifold in $\Gr_S^{\iso}(M)$.
	The second assertion follows from the fact that the isodrastic leaf through $N$ corresponds to the subspace 
	$B^1(N)\times\Gamma(L\oplus L^*\oplus E)$, see \cite[Section 8]{Lee}.
\end{proof}

\subsection{Weighted isotropic nonlinear Grassmannians as coadjoint orbits}\label{3.2}

In this section we recall the results in \cite{W90,Lee,GBV17} about coadjoint orbits of the Hamiltonian group $\Ham_c(M)$ modeled on weighted isotropic nonlinear Grassmannians of type $S$ in $M$,
and extend them to a possibly nonconnected model manifold $S$. 
Here we present them in a manner that readily generalizes to manifolds of weighted nonlinear flags.

Let $S$ be a closed $k$-dimensional manifold. 
The preimage of $\Gr^{\iso}_S(M)$ under the canonical bundle projection $\Gr_S^{\wt}(M)\to\Gr_S(M)$ is a splitting smooth submanifold in $\Gr_S^{\wt}(M)$ which will be denoted by $\Gr_{S}^{\wt\iso}(M)$.
The diffeomorphism in \eqref{grsw} restricts to a diffeomorphism of bundles over $\Gr_S^{\iso}(M)$, 
\[\Gr_{S}^{\wt\iso}(M)=\Emb_S^{\iso}(M)\x_{\Diff(S)}\Den_\x(S).\]

The preimage of $\Gr^{\iso}_S(M)$ under the canonical bundle projection $\Gr_{S,\mu}^{\wt}(M)\to\Gr_S(M)$ is a splitting smooth submanifold in $\Gr_{S,\mu}^{\wt}(M)$ which will be denoted by $\Gr_{S,\mu}^{\wt\iso}(M)$ and will be referred to as the \emph{nonlinear Grassmannian of weighted isotropic submanifolds of type $(S,\mu)$} in $M$.
The diffeomorphism in \eqref{E:grswmu} restricts to a diffeomorphism of bundles over $\Gr_S^{\iso}(M)$, 
\[
\Gr_{S,\mu}^{\wt\iso}(M)=\Emb^{\iso}_S(M)\x_{\Diff(S)}\Den(S)_\mu.
\]

We recall the $\Diff(S)$ equivariant linear map  $h_S:\Den(S)\to H^k(S,\O_S)$,
$h_S(\al)=[\al]$ in \eqref{E:h}, with kernel $d\Om^{k-1}(S,\O_S)$, which we restrict to $\Den_{\x}(S)$.
The product of integrable distributions $\D\x\ker Th_S$  on $\Emb_S^{\iso}(M)\x\Den_\x(S)$
descends to an integrable distribution 
\[
\bar\D:=\D\x_{\Diff(S)}\ker Th_S
\]
on $\Gr_{S}^{\wt\iso}(M)$, of codimension $\dim H^1(S;\RR)+\dim H^k(S;\O_S)$.
The image of $\bar\D$ under the forgetful map
$\Gr_S^{\wt\iso}(M)\to\Gr_S^{\iso}(M)$ is the isodrastic distribution $\underline\D$ in \eqref{DDDDn}.
In view of~\eqref{E:DenSmu}, each leaf $\G$ of $\bar\D$ is a connected component in the preimage of an isodrast $\L$ in $\Gr_S^{\iso}(M)$ under the bundle projection $\Gr_{S,\mu}^{\wt\iso}(M)\to\Gr_S^{\iso}(M)$, for some volume density $\mu$ on $S$.
Hence, each leaf $\G$ of $\bar\D$ is a splitting smooth submanifold of codimension $\dim H^1(S;\RR)$ in $\Gr_{S,\mu}^{\wt\iso}(M)$.

According to \cite{W90,Lee}, the group $\Ham(M)$ acts transitively on the leaves of $\bar\D$.
We need the following slightly stronger statement.

\begin{proposition}\label{abo}
	The $\Ham_c(M)$ action on each leaf $\G\subseteq\Gr^{\wt\iso}_S(M)$ of the isodrastic foliation $\bar\D$ admits local smooth sections.
\end{proposition}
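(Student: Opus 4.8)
The plan is to reduce the statement to the infinitesimal level and then invoke the abstract local-section machinery (Lemma~\ref{L:loc.sec}), mirroring the proof of Proposition~\ref{lss} but now on the weighted space. First I would fix a leaf $\G\subseteq\Gr^{\wt\iso}_S(M)$ of $\bar\D$ and a base point $(N,\nu)\in\G$, and choose an isotropic embedding $\ph_0$ with $\ph_0(S)=N$ together with $\mu$ on $S$ so that $(\ph_0)_*\mu=\nu$. Using the diffeomorphism $\Gr_S^{\wt\iso}(M)=\Emb_S^{\iso}(M)\x_{\Diff(S)}\Den_\x(S)$ and the fact that the $\Diff(S)$ action there is free with local sections, it suffices to produce local smooth sections for the $\Ham_c(M)$ action on the leaf $\tilde\G\subseteq\Emb_S^{\iso}(M)\x\Den_\x(S)$ of the product distribution $\D\x\ker Th_S$ lying over $\G$ — in other words, to show the $\Ham_c(M)$ action there is infinitesimally transitive in a way that depends smoothly on the base point.

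The infinitesimal computation is the heart of the matter. A tangent vector to $\tilde\G$ at $(\ph,\al)$ has the form $(u_\ph,d\gamma)$ with $u_\ph\in\D_\ph$ (so $\ph^*i_{u_\ph}\om=d\bar f$ for some $\bar f\in C^\infty(S)$) and $d\gamma\in d\Om^{k-1}(S;\O_S)=\ker h_S$. The fundamental vector field of $X_g\in\ham_c(M)$ at $(\ph,\al)$ is $(X_g\o\ph,\,-L_{X_g\o\ph}^{\mathrm{along\ }S}\al)$; here the density part is $-\mathcal L_{\bar X}\al$ where $\bar X\in\X(S)$ is determined by $X_g$ via the embedding, because $\ph(S)$ is isotropic so $X_g$ restricted to $\ph(S)$ is tangent to $\ph(S)$ exactly when $dg\o\ph$ annihilates $T\ph(S)$ — more precisely, the component of $X_g\o\ph$ tangent to $\ph(S)$ pushes forward to a vector field on $S$. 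So I need: given $(u_\ph,d\gamma)$, find $g\in C^\infty_c(M)$ with $X_g\o\ph=u_\ph$ \emph{and} the induced $\bar X$ on $S$ satisfying $\mathcal L_{\bar X}\al=-d\gamma$. The first condition is handled exactly as in Proposition~\ref{lss}: build $g$ from $\bar f$ by extending and correcting with a tubular-neighborhood bump function, so that $dg\o\ph=i_{u_\ph}\om$. This forces the tangential part of $u_\ph$, hence $\bar X$, to be whatever it is — I do not get to choose it independently. The resolution is that I have freedom in the normal direction only through the choice of tubular-neighborhood data, which does not affect $\bar X$; instead the density equation must be satisfied by modifying $g$ along $S$ in the tangential directions. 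Concretely: first solve $\mathcal L_{\bar X}\al=-d\gamma$ for a vector field $\bar X$ on $S$ (possible since $-d\gamma$ is exact, so by Moser/Hamilton one can write $-d\gamma=\mathcal L_{\bar X}\al$ with $\bar X$ depending smoothly on $\gamma$ — this is precisely the kind of statement underlying Lemma~\ref{L:diffmuS} and \eqref{talf}), then build $g\in C^\infty_c(M)$ whose Hamiltonian vector field along $\ph(S)$ has prescribed tangential part $\ph_*\bar X$ and prescribed symplectic-normal part matching $u_\ph$'s normal part; this is possible because on an isotropic submanifold the tangential and normal contributions to $i_{X_g}\om|_{\ph(S)}$ are controlled by independent pieces of the $1$-jet of $g$ along $\ph(S)$ (the normal-bundle splitting $TM|_N\cong TN\oplus TN^\om/TN\oplus\cdots$ from Weinstein's isotropic neighborhood theorem, used already in Lemma~\ref{L:GrSNMiso}). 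All of this can be arranged to depend smoothly on $(\ph,\al)$ and on $(u_\ph,d\gamma)$ for $(\ph,\al)$ near the base point, using fixed tubular-neighborhood and metric data.

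Having produced a smooth map $\tilde\sigma$ sending a tangent vector of $\tilde\G$ to an element of $\ham_c(M)$ whose fundamental vector field is that tangent vector, I would feed this into Lemma~\ref{L:loc.sec} (noting, as in Proposition~\ref{lss}, that we treat $\Ham_c(M)$ merely as smooth maps into $\Diff_c(M)$ taking Hamiltonian values, so the regularity hypothesis is the one already used there) to obtain local smooth sections for the $\Ham_c(M)$ action on $\tilde\G$. Finally I would push this down: combining with the local smooth sections of the free $\Diff(S)$ action on $\Emb_S^{\iso}(M)\x\Den_\x(S)$ via Lemma~\ref{before.sec}, exactly as in the unweighted Corollary~\ref{adun} and in the proof of Theorem~\ref{T:wtFlag.homog}(d), yields local smooth sections for the $\Ham_c(M)$ action on $\G\subseteq\Gr^{\wt\iso}_S(M)$. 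The main obstacle I expect is the simultaneous control in the infinitesimal step: one must realize \emph{both} the isotropic-embedding variation $u_\ph$ \emph{and} the density variation $d\gamma$ by a single Hamiltonian $g$, which requires disentangling the tangential and normal jets of $g$ along the isotropic submanifold and checking that the tangential part can be prescribed freely (via solving $\mathcal L_{\bar X}\al=-d\gamma$) without disturbing the already-fixed normal part — verifying this cleanly, with smooth dependence on parameters, is where the real work lies.
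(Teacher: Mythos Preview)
Your proposal contains a genuine gap: the $\Ham_c(M)$ action on the product $\Emb_S^{\iso}(M)\times\Den_\times(S)$ acts only on the first factor, namely $g\cdot(\ph,\al)=(g\circ\ph,\al)$, so the fundamental vector field at $(\ph,\al)$ is $(X_g\circ\ph,0)$, not $(X_g\circ\ph,-\mathcal L_{\bar X}\al)$. Consequently $\Ham_c(M)$ is \emph{not} transitive on the leaf $\tilde\G$ of $\D\times\ker Th_S$, and your reduction ``it suffices to produce local smooth sections for the $\Ham_c(M)$ action on $\tilde\G$'' is simply false. The density direction in $\tilde\G$ is moved by the structure group $\Diff(S)$, not by $\Ham_c(M)$; only after passing to the quotient do the two pieces interact.

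The paper's argument sidesteps this entirely. It writes the leaf (up to connected components) as the associated bundle
\[
\Gr_{S,\mu}^{\wt\iso}(M)|_{\L}=\Emb_S^{\iso}(M)|_{\L}\times_{\Diff(S)}\Den(S)_\mu
\]
and then invokes Lemma~\ref{before.sec} directly: $\Ham_c(M)$ acts with local smooth sections on $P=\Emb_S^{\iso}(M)|_{\L}$ by Proposition~\ref{lss}, $\Diff(S)$ acts with local smooth sections on $Q=\Den(S)_\mu$ by Proposition~\ref{P:DenS.homog}(b), hence $\Ham_c(M)$ acts with local smooth sections on $P\times_{\Diff(S)}Q$. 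No infinitesimal computation on the weighted space is needed. Your idea can be salvaged if reformulated correctly on the \emph{quotient}: given a class $[(u_\ph,d\gamma)]$, first solve $L_Z\al=d\gamma$ for $Z\in\X(S)$ via Moser, then use Proposition~\ref{lss} to find $g\in C^\infty_c(M)$ with $X_g\circ\ph=u_\ph+T\ph\circ Z$; then $[(X_g\circ\ph,0)]=[(u_\ph,d\gamma)]$ downstairs. But carried out this way you are reproving Lemma~\ref{before.sec} by hand in this special case, and the elaborate discussion of disentangling tangential and normal jets of $g$ along the isotropic submanifold is unnecessary.
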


\begin{proof}
	Suppose $\L$ is an isodrastic leaf in $\Gr_S^{\iso}(M)$ and let $\Gr_{S,\mu}^{\wt\iso}(M)|_{\L}$ denote its preimage under the bundle projection $\Gr_{S,\mu}^{\wt\iso}(M)\to\Gr_S^{\iso}(M)$.
	It suffices to show that the $\Ham_c(M)$ action on $\Gr_{S,\mu}^{\wt\iso}(M)|_{\L}$ admits local smooth sections.
	The diffeomorphism in \eqref{E:grswmu} restricts to a diffeomorphism 
	\begin{equation}\label{aso}
		\Gr_{S,\mu}^{\wt\iso}(M)|_{\L}=\Emb^{\iso}_S(M)|_{\L}\x_{\Diff(S)}\Den(S)_\mu.
	\end{equation}
	By Proposition~\ref{lss}, the $\Ham_c(M)$ action on $\Emb^{\iso}_S(M)|_\L$ admits local smooth sections.
	According to Proposition~\ref{P:DenS.homog}(b), the $\Diff(S)$ action on $\Den(S)_\mu$ admits local smooth sections too. 
	Using Lemma~\ref{before.sec} in the Appendix, we conclude that the $\Ham_c(M)$ action on the associated bundle in \eqref{aso} admits local smooth sections.
\end{proof}

\begin{lemma}[{\cite{Lee}}]\label{noua}
The leafwise differential 2-form on $(\Emb_S^{\iso}(M)\x\Den_\x(S),\D\x\ker Th_S)$, given by
\begin{equation}\label{omphal}
\Om_{(\ph,\al)}((u_\ph,d\la),(v_\ph,d\ga))
:=\int_S(\om(u_\ph,v_\ph)\al-\ph^*i_{u_\ph}\om\wedge\ga
+\ph^*i_{v_\ph}\om\wedge\la),
\end{equation}
is closed and $\Diff(S)$ invariant.
Moreover, its kernel is spanned by the infinitesimal generators of the $\Diff(S)$ action.
\end{lemma}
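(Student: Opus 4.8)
The plan is to verify the three claims about $\Om$ in \eqref{omphal} --- closedness, $\Diff(S)$ invariance, and the identification of its kernel --- by relating $\Om$ to the canonical (pre)symplectic structure on the product $\Emb_S^{\iso}(M)\x\Den_\x(S)$ coming from the moment map picture. First I would observe that the formula \eqref{omphal} is manifestly bilinear and antisymmetric in the pairs $(u_\ph,d\la)$ and $(v_\ph,d\ga)$, so it does define a leafwise $2$-form on the distribution $\D\x\ker Th_S$; one should note here that it only depends on $u_\ph,v_\ph$ through their restriction to the relevant subbundle and on $d\la,d\ga$ rather than on $\la,\ga$ themselves, so it is well defined on the leaves.

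For closedness, the cleanest route is to recognize $\Om$ as (the leafwise restriction of) $\iota^*\varpi$ for a natural closed $2$-form $\varpi$ built from $\om$. Concretely, on $\Emb_S(M)$ there is the Donaldson--Weinstein type $2$-form $\varpi^{\mathrm{Emb}}_\ph(u_\ph,v_\ph)=\int_S\ph^*(i_{v_\ph}i_{u_\ph}\om)\,\al$ --- but since the weight $\al$ varies too, the correct ambient object lives on $\Emb_S(M)\x\Den(S)$, namely $\varpi_{(\ph,\al)}\bigl((u_\ph,\dot\al),(v_\ph,\dot\al')\bigr)=\int_S\bigl(\om(u_\ph,v_\ph)\,\al-\ph^*i_{u_\ph}\om\wedge\dot\al'+\ph^*i_{v_\ph}\om\wedge\dot\al\bigr)$, interpreting $\dot\al,\dot\al'\in\Den(S)=\Om^k(S;\O_S)$ as the top-degree pieces one wedges against $(k-1)$-forms. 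I would compute $d\varpi$ using the Cartan formula for the exterior derivative of a $2$-form on a (Fr\'echet) manifold, $d\varpi(X,Y,Z)=\sum_{\mathrm{cyc}}\bigl(X\varpi(Y,Z)-\varpi([X,Y],Z)\bigr)$, with $X,Y,Z$ taken to be constant vector fields in the natural affine/linear coordinates on the two factors (so all brackets vanish and only the directional-derivative terms survive). The directional derivative of $\int_S\om(u_\ph,v_\ph)\al$ in the $\ph$-direction produces a term involving $\io_\ph^*(i_wi_vi_u\,d\om)=0$ since $\om$ is closed, plus terms that cancel cyclically against the derivatives of the two mixed terms; the derivative in the $\al$-direction of $\int_S\om(u_\ph,v_\ph)\al$ is linear in the density variation and pairs up with the $\al$-derivatives of the wedge terms. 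This is the step I expect to be the main obstacle: organizing the cyclic sum so that the $d\om=0$ contributions and the Stokes/integration-by-parts contributions (needed because only $d\la,d\ga$, not $\la,\ga$, are intrinsic) visibly cancel. Restricting the resulting closed ambient form to the leaves of $\D\x\ker Th_S$ gives closedness of $\Om$.

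For $\Diff(S)$ invariance, I would use that the $\Diff(S)$ action on $\Emb_S(M)\x\Den(S)$ is by $(\ph,\al)\cdot g=(\ph\o g, g^*\al)$, that this action preserves the distribution $\D\x\ker Th_S$ (both $\D$ and $\ker Th_S=d\Om^{k-1}(S;\O_S)$ are $\Diff(S)$ invariant by construction), and that every ingredient of \eqref{omphal} --- the pairing $\om(u_\ph,v_\ph)$, the pullback $\ph^*i_{u_\ph}\om$, the wedge product, and integration over $S$ --- transforms naturally under reparametrization, so the change of variables $g$ in the integral over $S$ absorbs the action. This is a short naturality check.

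For the kernel, I would first compute the infinitesimal generator of the $\Diff(S)$ action at $(\ph,\al)$ in the direction $Z\in\X(S)=\mathrm{Lie}\,\Diff(S)$: it is $\zeta_Z(\ph,\al)=(T\ph\o Z,\,L_Z\al)=(T\ph\o Z,\,d\,i_Z\al)$, which indeed lies in $\D\x\ker Th_S$. Plugging $u_\ph=T\ph\o Z$ and $\la=i_Z\al$ into \eqref{omphal} and using $i_{T\ph\o Z}\om$ pulled back to $S$ equals $i_Z(\ph^*\om)=0$ on $S$ when... --- more carefully, $\ph^*(i_{T\ph\o Z}\om)=i_Z\ph^*\om$ which vanishes since $\ph$ is isotropic --- one finds all three terms collapse and $\Om(\zeta_Z,\cdot)=0$, so the generators lie in the kernel. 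Conversely, I would show that if $(v_\ph,d\ga)$ pairs to zero with every $(u_\ph,d\la)\in\D_\ph\x\ker Th_S$, then first varying only in the $d\la$ direction forces $\ph^*i_{v_\ph}\om$ to be exact on $S$ (so $v_\ph$ is constrained in a way that, modulo the generators of the $\Diff(S)$ action, one can arrange $\ph^*i_{v_\ph}\om=0$, i.e. $v_\ph$ comes from a genuine isotropic deformation direction), and then varying in the $u_\ph$ directions with $d\la=0$ forces, via weak nondegeneracy of $\om$ along $\ph(S)$ in the normal directions together with the constraint $v_\ph\perp$ being tangent, that $v_\ph$ is tangent to $\ph(S)$, hence of the form $T\ph\o Z$; feeding this back shows $d\ga=d\,i_Z\al$ up to the closed part already killed by passing to $\ker Th_S$, so $(v_\ph,d\ga)=\zeta_Z(\ph,\al)$. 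I would cite \cite{Lee} for the precise nondegeneracy computation on the normal bundle, since the structure of $T_N\Gr_S^{\iso}(M)$ recalled in Section~\ref{3.1} is exactly what powers this argument.
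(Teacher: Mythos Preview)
The paper does not prove this lemma; it is quoted from \cite{Lee}. What the paper \emph{does} prove is the kernel statement for the flag generalization (Lemma~\ref{flg}), and specializing that proof to $r=1$ gives the kernel argument for Lemma~\ref{noua}. So the relevant comparison is between your kernel sketch and the proof of Lemma~\ref{flg}.

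Your closedness and invariance outlines are reasonable plans and broadly standard; the paper offers nothing to compare them to, since it simply cites \cite{Lee} for these. Your remark that well-definedness has to be checked (the formula involves the primitives $\la,\ga$, not only $d\la,d\ga$) is correct and is exactly the point where exactness of $\ph^*i_{u_\ph}\om$ on the leaf $\D$ is used.

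For the kernel, your strategy is right but one step is misstated. You write that varying in the $d\la$ direction ``forces $\ph^*i_{v_\ph}\om$ to be exact on $S$''. That is vacuous: $v_\ph\in\D_\ph$ already means $\ph^*i_{v_\ph}\om=dh$ for some $h\in C^\infty(S)$. What the variation in $\la$ actually yields is $\int_S\ph^*i_{v_\ph}\om\wedge\la=0$ for all $\la\in\Om^{k-1}(S;\O_S)$; writing $\ph^*i_{v_\ph}\om=dh$ and integrating by parts gives $\int_S h\,d\la=0$ for all exact densities $d\la$, hence $h$ is locally constant and $\ph^*i_{v_\ph}\om=0$. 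So you get $\ph^*i_{v_\ph}\om=0$ directly, with no need to ``subtract a generator'' first. This is precisely what the paper does in the proof of Lemma~\ref{flg}: from \eqref{una} (with $r=1$) one reads off $\ph^*i_{u_\ph}\om=0$.

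The next step---showing that $v_\ph$ is tangent to $\ph(S)$---is the substantive one, and you defer it to \cite{Lee}. The paper's argument (again in Lemma~\ref{flg}, specialized) is concrete: if $v_\ph(x)\notin T_{\ph(x)}\ph(S)$ at some point $x$, use Weinstein's isotropic neighborhood theorem to manufacture $u_\ph\in\D_\ph$ supported near $x$, with values in the symplectic orthogonal of $T\ph(S)$ (so $\ph^*i_{u_\ph}\om=0$), and such that $\om(u_\ph,v_\ph)\ge0$ is not identically zero. Then only the term $\int_S\om(u_\ph,v_\ph)\al>0$ survives in \eqref{omphal}, contradicting membership in the kernel. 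Once $v_\ph=T\ph\circ Z$, the remaining condition \eqref{doua} becomes $\int_S dh\wedge(\la-i_Z\al)=0$ for all $h$, whence $d\la=d\,i_Z\al$, which matches your final step.
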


By Lemma~\ref{noua}, the leafwise differential 2-form $\Om$ in \eqref{omphal} descends to a leafwise symplectic form $\bar\Om$ on $(\Gr_S^{\wt\iso}(M),\bar\D)$.
Thus every leaf $\G$ of the isodrastic distribution $\bar\D$ in $\Gr_S^{\wt\iso}(M)$ is endowed with a symplectic form, the restriction of $\bar\Om$, which we denote by the same letter.
By Proposition~\ref{abo}, $\G$ is an orbit for the $\Ham_c(M)$ action on the weighted isotropic nonlinear Grassmannian $\Gr_S^{\wt\iso}(M)$. 
This action is Hamiltonian with injective and $\Ham(M)$ equivariant moment map \cite{Lee}
\begin{equation}\label{bird}
	J:\G\subseteq\Gr_S^{\wt\iso}(M)\to\ham_c(M)^*,\quad \langle J(N,\nu),X_f\rangle=\int_Nf\nu.
\end{equation}
Here we use the $\Symp(M)$ equivariant isomorphism $\ham_c(M)=C_0^\oo(M)$ where the latter denotes the Lie algebra of all compactly supported functions on $M$ for which the integral with respect to the Liouville form vanishes on all closed connected components of $M$.

For connected $S$, the subsequent theorem is due to Weinstein \cite{W90} in the Lagrangian case and due to Lee \cite{Lee} for isotropic submanifolds.

\begin{theorem}[{\cite{W90,Lee}}]\label{t1}
	The moment map $J\colon(\G,\bar\Om)\to\ham_c(M)^*$ in \eqref{bird} is one-to-one onto a coadjoint orbit of the Hamiltonian group $\Ham_c(M)$. 
	The Kostant-Kirillov-Souriau symplectic form $\om_{\KKS}$ on the coadjoint orbit satisfies ${J}^*\om_{\KKS}=\bar\Om$.
\end{theorem}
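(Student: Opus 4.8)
The plan is to prove that the moment map $J\colon(\G,\bar\Om)\to\ham_c(M)^*$ of \eqref{bird} is a bijection onto a coadjoint orbit with $J^*\om_\KKS=\bar\Om$. First I would recall that, by Proposition~\ref{abo}, the leaf $\G$ is a single $\Ham_c(M)$ orbit, and by the injectivity of the map $J$ on $\Gr_S^{\wt\iso}(M)$ (which follows from the injectivity statement in Section~\ref{2.2}, or the flag version Lemma~\ref{injectiv} with $r=1$), $J$ restricts to an injective $\Ham(M)$ equivariant map on $\G$. The equivariance means $J(\G)$ is contained in a single coadjoint orbit $\O$ of $\Ham_c(M)$ in $\ham_c(M)^*$; since $\G$ is a $\Ham_c(M)$ orbit and $J$ intertwines the two actions, $J(\G)$ is all of the coadjoint orbit through $J(N,\nu)$ for any $(N,\nu)\in\G$. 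Hence $J\colon\G\to\O$ is an equivariant bijection between two homogeneous spaces of $\Ham_c(M)$.

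The core of the argument is the moment map identity. I would show directly that $J$ in \eqref{bird} is a moment map for the $\Ham_c(M)$ action on $(\G,\bar\Om)$, i.e.\ that for every $f\in C^\infty_c(M)$ the function $\langle J(\cdot),X_f\rangle$ on $\G$ has differential $i_{\zeta_{X_f}}\bar\Om$, where $\zeta_{X_f}$ is the infinitesimal generator of the action. This is a computation at a point $(N,\nu)=(\ph(S),\ph_*\mu)$ using the explicit formula \eqref{omphal} for $\Om$ on the frame bundle level: the generator of $X_f$ at $\ph$ is $u_\ph=X_f\circ\ph$, and for a tangent vector $(v_\ph,d\ga)\in\D\x\ker Th_S$ one computes, using $\ph^*i_{u_\ph}\om=d(f\circ\ph)$,
\[
\Om_{(\ph,\al)}\bigl((u_\ph,0),(v_\ph,d\ga)\bigr)
=\int_S\bigl(\om(u_\ph,v_\ph)\al-d(f\circ\ph)\wedge\ga+\ph^*i_{v_\ph}\om\wedge 0\bigr),
\]
and after an integration by parts on the middle term together with the identification of $d\langle J,X_f\rangle$ at $(N,\nu)$ (which pairs $v_\ph$ against $df$ and $d\ga$ against $f$), the two sides agree. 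Because $J$ is injective and equivariant, pulling back the Kostant--Kirillov--Souriau form $\om_\KKS$ along $J$ produces a closed $2$-form on $\G$ that is also a moment-map-compatible form for the same action; since $\bar\Om$ is symplectic (nondegenerate by Lemma~\ref{noua}) and the difference $J^*\om_\KKS-\bar\Om$ would be a closed invariant $2$-form annihilated by all generators $\zeta_{X_f}$, hence zero by transitivity of the $\Ham_c(M)$ action on $\G$, I conclude $J^*\om_\KKS=\bar\Om$. This also forces $J$ to be an immersion, so the continuous equivariant bijection $J\colon\G\to\O$ is a symplectomorphism onto the coadjoint orbit.

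The main obstacle I anticipate is bookkeeping rather than conceptual: verifying the moment map identity requires care with the densities-versus-forms formalism in \eqref{omphal} (the terms $\ph^*i_{u_\ph}\om\wedge\ga$ pair a $1$-form on $S$ with a density-valued $(k-1)$-form), with the sign conventions for $\om_\KKS$ on a coadjoint orbit of a possibly only ``convenient'' Lie group $\Ham_c(M)$, and with checking that all integrations by parts are legitimate on the (possibly nonconnected, nonorientable) closed manifold $S$. A secondary point is justifying that ``annihilated by all generators plus invariant implies zero'' on $\G$, which uses that the generators $\zeta_{X_f}$ span the tangent space of $\G$ at each point—this is exactly the content of Proposition~\ref{abo} together with Lemma~\ref{noua}. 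Once these are in place, the theorem follows; indeed, for connected $S$ this is precisely the statement of Weinstein \cite{W90} and Lee \cite{Lee}, and the only new ingredient here is that the same proof goes through verbatim when $S$ is allowed to be nonconnected, since all the cited structural results (Propositions~\ref{lss} and~\ref{abo}, Lemma~\ref{noua}) were established in that generality.
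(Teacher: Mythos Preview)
Your proposal is correct and follows essentially the same approach as the paper. The paper assembles the same three ingredients---transitivity of the $\Ham_c(M)$ action on $\G$ (Proposition~\ref{abo}), injectivity of $J$, and the moment map property---and then invokes the folklore Proposition~\ref{spcase} (a transitive action with injective equivariant moment map identifies the orbit with a coadjoint orbit and pulls back $\om_{\KKS}$ to the given symplectic form), whereas you unpack that proposition inline via the ``difference is annihilated by all generators, hence vanishes by transitivity'' argument; the content is the same.
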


In this generality the theorem follows from the discussion above and the following folklore result (see for instance the Appendix in \cite{HV20}):

\begin{proposition}\label{spcase}
	Suppose the action of $G$ on $(\mathcal M,\Om)$ is transitive with injective equivariant moment map $J:\mathcal M\to\g^*$.
	Then $J$ is one-to-one onto a coadjoint orbit of $G$.
	Moreover, it pulls back the Kostant--Kirillov--Souriau symplectic form $\omega_{\KKS}$ on the coadjoint orbit to the symplectic form $\Om$.
\end{proposition}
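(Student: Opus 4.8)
The plan is to split the statement into a soft group-theoretic half (the orbit identification) and a differential-geometric half (the pullback of $\om_{\KKS}$), handling the latter through a characterizing property of the Kostant--Kirillov--Souriau form rather than through explicit bracket computations. Fix a base point $x_0\in\mathcal M$, put $\mu_0:=J(x_0)\in\g^*$, and let $\mathcal O:=\Ad^*_G\mu_0$ be the coadjoint orbit through $\mu_0$.

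\emph{Orbit identification.} Equivariance of $J$ together with transitivity of the $G$-action gives $J(\mathcal M)=J(G\cdot x_0)=\Ad^*_G\mu_0=\mathcal O$, so $J$ takes values in the single coadjoint orbit $\mathcal O$; being injective by hypothesis, $J\colon\mathcal M\to\mathcal O$ is a bijection. To see that this bijection is an isomorphism of homogeneous spaces I would compare isotropy groups at $x_0$: if $g\in G_{\mu_0}$ then $J(g\cdot x_0)=\Ad^*_gJ(x_0)=\mu_0=J(x_0)$, whence $g\cdot x_0=x_0$ by injectivity, while $G_{x_0}\subseteq G_{\mu_0}$ is immediate from equivariance. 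Thus $G_{x_0}=G_{\mu_0}$, and the $G$-equivariant bijection $J$ is the tautological identification $\mathcal M=G/G_{x_0}=G/G_{\mu_0}=\mathcal O$; in particular it is a diffeomorphism onto $\mathcal O$.

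\emph{Pullback of $\om_{\KKS}$.} Here I would use the fact that the inclusion $\mathcal O\hookrightarrow\g^*$ is a moment map for the coadjoint $G$-action, which is one way to characterize $\om_{\KKS}$: one has $i_{X_{\mathcal O}}\om_{\KKS}=d\ell_X$ on $\mathcal O$ for every $X\in\g$, where $\ell_X(\mu):=\langle\mu,X\rangle$ and $X_{\mathcal O}$ denotes the fundamental vector field of the coadjoint action. Since $J$ is $G$-equivariant it intertwines the fundamental vector fields, $TJ\cdot X_{\mathcal M}=X_{\mathcal O}$, so for $v\in T_x\mathcal M$,
\[
	\bigl(i_{X_{\mathcal M}}J^*\om_{\KKS}\bigr)(v)
	=\bigl(i_{X_{\mathcal O}}\om_{\KKS}\bigr)(TJ\cdot v)
	=(d\ell_X)(TJ\cdot v)
	=dJ^X(v),
\]
the first equality being the definition of pullback together with $TJ\cdot X_{\mathcal M}=X_{\mathcal O}$, the second the moment-map property of the inclusion, and the third the chain rule together with $J^X=\ell_X\circ J$. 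On the other hand $i_{X_{\mathcal M}}\Om=dJ^X$ holds by the assumption that $J$ is a moment map for the $G$-action on $(\mathcal M,\Om)$. Hence $i_{X_{\mathcal M}}\bigl(\Om-J^*\om_{\KKS}\bigr)=0$ for all $X\in\g$, and since the fundamental vector fields span $T_x\mathcal M$ at every point, the $2$-form $\Om-J^*\om_{\KKS}$ vanishes identically, i.e.\ $J^*\om_{\KKS}=\Om$.

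The argument is entirely formal, so there is no genuine obstacle; two points deserve care. First, the sign and normalization conventions for $\Ad^*$, $\ad^*$, the moment map equation, and $\om_{\KKS}$ must be chosen consistently, so that the moment-map relation $i_{X_{\mathcal M}}\be=dJ^X$ holds with the same sign for $\be=\Om$ and for $\be=J^*\om_{\KKS}$. Second --- and this is the only substantive point in the present setting --- $G=\Ham_c(M)$ is not a Lie group in the classical sense, so the phrases ``coadjoint orbit'', ``$\om_{\KKS}$'', and especially the step ``the fundamental vector fields span $T_x\mathcal M$'' must be read through the local-section results proved earlier (Proposition~\ref{abo} and its analogues for weighted flags), which is precisely what upgrades abstract transitivity to the infinitesimal transitivity used above. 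For these standard matters I would simply refer to the folklore treatment, e.g.\ the Appendix of \cite{HV20}.
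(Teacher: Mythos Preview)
The paper does not actually prove this proposition: it is stated as a folklore result with a reference to the Appendix of \cite{HV20}, and is then used as a black box to deduce Theorems~\ref{t1} and~\ref{t2}. Your argument is correct and is precisely the standard one that a reference like \cite{HV20} would supply --- indeed, your own closing remark that one should ``refer to the folklore treatment, e.g.\ the Appendix of \cite{HV20}'' is exactly what the paper does in lieu of a proof.
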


The same coadjoint orbits of the Hamiltonian group,
under the additional restriction $H^1(S;\RR)=0$ on the closed connected manifold $S$,
can be obtained via symplectic reduction in the Marsden-Weinstein ideal fluid dual pair, as shown in \cite{GBV17}.


\subsection{Manifolds of isotropic nonlinear flags}\label{SS:manf}

In this section we extend the constructions of the previous section to nonlinear flags.

As in Section~\ref{SS:red}, we let $\iota=(\iota_1,\dotsc,\iota_{r-1})$ denote a collection of fixed embeddings $\iota_i\colon S_i\to S_{i+1}$.
We let $\Flag_{\S,\io}^{\iso}(M)$ denote the preimage of $\Gr_{S_r}^{\iso}(M)$ under the canonical bundle projection $\Flag_{\S,\io}(M)\to\Gr_{S_r}(M)$, cf.~\cite[Remark~2.11]{HV20}.
This is a splitting smooth submanifold in $\Flag_{\S,\io}(M)$ called the manifold of \emph{isotropic nonlinear flags of type $(\S,\io)$ in $M$.}
Using Lemma~\ref{L:GrSNMiso}, and proceeding by induction on the depth of the flag, one readily shows that the canonical inclusion
\begin{equation}\label{E:FlagGriso}
	\Flag_{\S,\io}^{\iso}(M)\subseteq\prod_{i=1}^r\Gr_{S_i}^{\iso}(M)
\end{equation}
is a splitting smooth submanifold.

Let $\Flag_{\S,\io}^{\wt\iso}(M)$ denote the preimage of $\Flag_{\S,\io}^{\iso}(M)$ under the canonical bundle projection $\Flag_{\S,\io}^{\wt}(M)\to\Flag_{\S,\io}(M)$.
This is a splitting smooth submanifold in $\Flag_{\S,\io}^{\wt}(M)$ called the manifold of \emph{weighted isotropic nonlinear flags of type $(\S,\io)$ in $M$.}
The diffeomorphism in \eqref{E:FlagwtSiota} restricts to a diffeomorphism of bundles over $\Flag_{\S,\io}^{\iso}(M)$,
\[
	\Flag_{\S,\io}^{\wt\iso}(M)=\Emb_{S_r}^{\iso}(M)\x_{\Diff(\S;\io)}\Den_\x(\S).
\]
The canonical inclusion $\Flag_{\S,\io}^{\wt\iso}(M)\subseteq\prod_{i=1}^r\Gr_{S_i}^{\wt\iso}(M)$ is a splitting smooth submanifold in view of \eqref{E:FlagGriso}.

As in Section~\ref{SS:DiffM.Flag}, we fix $\mu=(\mu_1,\dotsc,\mu_r)\in\Den_\x(\S)$, and let $\Flag_{\S,\io,\mu}^{\wt\iso}(M)$ denote the preimage of $\Flag_{\S,\io}^{\iso}(M)$ under the canonical bundle projection $\Flag_{\S,\io,\mu}^{\wt}(M)\to\Flag_{\S,\io}(M)$.
This is a splitting smoth submanifold in $\Flag_{\S,\io,\mu}^{\wt}(M)$ called the manifold of \emph{weighted isotropic nonlinear flags of type $(\S,\io,\mu)$ in $M$.}
The diffeomorphism in \eqref{Simu} restricts to a diffeomorphism
\begin{equation}\label{fligs}
	\Flag_{\S,\io,\mu}^{\wt\iso}(M)=\Emb^{\iso}_{S_r}(M)\x_{\Diff(\S;\io)}\Den(\S)_{\io,\mu}.
\end{equation}
Using \eqref{E:FlagGriso} and proceeding as in the proof of Theorem~\ref{T:wtFlag.homog}(c), one readily shows that the canonical inclusion $\Flag_{\S,\io,\mu}^{\wt\iso}(M)\subseteq\prod_{i=1}^r\Gr_{S_i,\mu_i}^{\wt\iso}(M)$ is a splitting smooth submanifold.

Recall the $\Diff(\S;\io)$ equivariant linear map $h_{\S,\io}:\Den(\S)\to H(\S,\io)$ in \eqref{E:hiota}
with kernel 
\[
	\ker Th_{\S,\io}=\{(d\ga_1,\dotsc,d\ga_r):\ga_i\in\Om^{k_i-1}(S_i;\mathcal O_{S_i}),\iota_{i-1}^*\ga_i=0\},
\]
whose restriction to $\Den_\x(\S)$ we also denote by $h_{\S,\io}$.
The product with the isodrastic distribution gives the integrable distribution $\D\x\ker Th_{\S,\io}$ on $\Emb_{S_r}^{\iso}(M)\x\Den_\x(\S)$, of finite codimension $\dim H^1(S_r;\RR)+\dim H(\S,\io)$. 
This $\Diff(\S;\io)$ invariant distribution descends to an integrable distribution
\begin{equation*}
	\bar\D =\D\x_{\Diff(\S;\io)}\ker Th_{\S,\io}
\end{equation*}
of the same codimension on $\Flag_{\S,\io}^{\wt\iso}(M)$.
The image of $\bar\D$ under the forgetful map is an integrable distribution on $\Flag_{\S,\io}^{\iso}(M)$ of codimension $\dim H^1(S_r;\RR)$, which coincides with the distribution that descends from the $\Diff(\S;\io)$ invariant isodrastic distribution $\D$ on $\Emb_{S_r}^{\iso}(M)$ by the principal bundle projection.
Using Proposition \ref{P:DenS.homog}(c) and \eqref{fligs}, we see that each leaf $\F$ of $\bar\D$ is a connected component in the preimage of an isodrast $\L$ in $\Gr_{S_r}^{\iso}(M)$ under the bundle projection $\Flag_{\S,\io,\mu}^{\wt\iso}(M)\to\Gr_{S_r}^{\iso}(M)$, for some volume density $\mu$ on $\S$.
Hence, each leaf $\F$ of $\bar\D$ is a splitting smooth submanifold of codimension $\dim H^1(S_r;\RR)$ in $\Flag_{\S,\io,\mu}^{\wt\iso}(M)$.

\begin{remark}\label{ffff}
	If $H^1(S_r;\RR)=0$, then the leaves of the isodrastic foliation $\bar\D$ are the connected components of $\Flag_{\S,\io,\mu}^{\wt\iso}(M)$.
\end{remark}

\begin{proposition}\label{abo2}
	The $\Ham_c(M)$ action on each leaf $\F\subseteq\Flag_{\S,\io}^{\wt\iso}(M)$ of the isodrastic foliation $\bar\D$ admits local smooth sections. 
\end{proposition}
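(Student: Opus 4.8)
The plan is to follow exactly the same strategy that worked for Proposition~\ref{abo}, but with the Grassmannian replaced by the flag manifold and $\Den(S)_\mu$ replaced by $\Den(\S)_{\io,\mu}$. First I would reduce to a fixed isodrast: since $\F$ is, by the discussion preceding the statement, a connected component in the preimage of a single isodrastic leaf $\L\subseteq\Gr_{S_r}^{\iso}(M)$ under the bundle projection $\Flag_{\S,\io,\mu}^{\wt\iso}(M)\to\Gr_{S_r}^{\iso}(M)$, it suffices to produce local smooth sections for the $\Ham_c(M)$ action on $\Flag_{\S,\io,\mu}^{\wt\iso}(M)|_\L$, the preimage of $\L$. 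Using the diffeomorphism \eqref{fligs}, this preimage is identified with the associated bundle
\[
	\Flag_{\S,\io,\mu}^{\wt\iso}(M)|_\L=\Emb^{\iso}_{S_r}(M)|_\L\x_{\Diff(\S;\io)}\Den(\S)_{\io,\mu},
\]
where $\Emb^{\iso}_{S_r}(M)|_\L$ denotes the preimage of $\L$ under the frame bundle projection $\Emb^{\iso}_{S_r}(M)\to\Gr_{S_r}^{\iso}(M)$.

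Next I would invoke the two local-section statements that feed into the associated-bundle argument. On the one hand, by Proposition~\ref{lss} the $\Ham_c(M)$ action on each isodrastic leaf $\E\subseteq\Emb^{\iso}_{S_r}(M)$ admits local smooth sections; since $\Emb^{\iso}_{S_r}(M)|_\L$ is a union of such leaves (each a connected component of the preimage of $\L$), the $\Ham_c(M)$ action on $\Emb^{\iso}_{S_r}(M)|_\L$ admits local smooth sections. On the other hand, by Proposition~\ref{P:DenS.homog}\eqref{I:DenS.homog.orbit} the $\Diff(\S;\io)$ action on $\Den(\S)_{\io,\mu}$ admits local smooth sections. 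With these two inputs in hand, Lemma~\ref{before.sec} in the Appendix—the general statement that an action admitting local smooth sections on the total space of a frame bundle descends to one on an associated bundle whose fiber carries a structure-group action with local smooth sections—yields local smooth sections for the $\Ham_c(M)$ action on the associated bundle above. Restricting to the connected component $\F$ gives the claim.

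The argument is essentially a bookkeeping exercise: the only point that requires a moment's care is the compatibility of the various bundle projections, namely that the $\Diff(\S;\io)$-equivariant splitting of $\Den(\S)_{\io,\mu}$ is compatible with the isodrastic distribution in the precise sense needed to apply Lemma~\ref{before.sec}. This is exactly parallel to the situation in the proof of Proposition~\ref{abo}, where one uses the decomposition $\bar\D=\D\x_{\Diff(S)}\ker Th_S$; here one uses $\bar\D=\D\x_{\Diff(\S;\io)}\ker Th_{\S,\io}$ instead. I do not anticipate a genuine obstacle—the main technical content has already been isolated in Proposition~\ref{lss} (transitivity of $\Ham_c(M)$ on isodrastic leaves of embeddings, with smooth dependence of Hamiltonians on the embedding) and Proposition~\ref{P:DenS.homog} (the density side)—so the proof is a routine assembly of these ingredients via the associated-bundle lemma.
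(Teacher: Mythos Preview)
Your proposal is correct and follows essentially the same approach as the paper's proof: reduce to the preimage $\Flag_{\S,\io,\mu}^{\wt\iso}(M)|_\L$ of an isodrastic leaf $\L$, identify it via \eqref{fligs} with the associated bundle $\Emb^{\iso}_{S_r}(M)|_\L\x_{\Diff(\S;\io)}\Den(\S)_{\io,\mu}$, and then apply Lemma~\ref{before.sec} using Proposition~\ref{lss} for the $\Ham_c(M)$ action on $\Emb^{\iso}_{S_r}(M)|_\L$ and Proposition~\ref{P:DenS.homog}(c) for the $\Diff(\S;\io)$ action on $\Den(\S)_{\io,\mu}$. The paper's proof is essentially verbatim what you wrote, minus the commentary.
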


\begin{proof}
	Suppose $\L$ is an isodrastic leaf in $\Gr^{\iso}_{S_r}(M)$ and let $\Flag_{S,\io,\mu}^{\wt\iso}(M)|_{\L}$ denote its preimage under the canonical projection $\Flag_{S,\io,\mu}^{\wt}(M)\to\Gr_{S_r}(M)$.
	It suffices to show that the $\Ham_c(M)$ action on $\Flag_{S,\io,\mu}^{\wt\iso}(M)|_{\L}$ admits local smooth sections.
	The diffeomorphism in \eqref{fligs} restricts to a diffeomorphism
	\begin{equation}\label{aso2}
		\Flag_{S,\io,\mu}^{\wt\iso}(M)|_{\L}=\Emb^{\iso}_{S_r}(M)|_{\L}\x_{\Diff(\S;\io)}\Den(\S)_{\io,\mu}.
	\end{equation}
	By Proposition~\ref{lss}, the $\Ham_c(M)$ action on $\Emb^{\iso}_{S_r}(M)|_\L$ admits local smooth sections.
	According to Proposition~\ref{P:DenS.homog}(c), the $\Diff(\S;\io)$ action on $\Den(\S)_{\io,\mu}$ admits local smooth sections too.
	With the help of Lemma~\ref{before.sec}, we conclude that the $\Ham_c(M)$ action on the associated bundle \eqref{aso2} admits local smooth sections. 
\end{proof}

\begin{corollary}\label{adunb}
	The $\Ham_c(M)$ action on each leaf of the isodrastic foliation on $\Flag^{\iso}_{\S,\iota}(M)$ admits local smooth sections.
\end{corollary}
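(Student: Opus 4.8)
The plan is to run the same argument that establishes Corollary~\ref{adun}, but using the reduced frame bundle of~\eqref{E:red.diag} in place of the frame bundle~\eqref{E:frame}, thereby deducing the statement from Proposition~\ref{lss}. Restricting the reduced frame bundle $\pFlag_{\S,\io}(M)\to\Flag_{\S,\io}(M)$ over the splitting submanifold $\Flag^{\iso}_{\S,\io}(M)$ and invoking the identification $\pFlag_{\S,\io}(M)=\Emb_{S_r}(M)$ from Remark~\ref{R:Fr.Emb}, one obtains a $\Ham_c(M)$ equivariant principal $\Diff(\S;\io)$ bundle $\pi\colon\Emb^{\iso}_{S_r}(M)\to\Flag^{\iso}_{\S,\io}(M)$ which carries the $\Diff(\S;\io)$ invariant isodrastic distribution $\D$ onto the isodrastic distribution on $\Flag^{\iso}_{\S,\io}(M)$; consequently every leaf of the latter is, up to connected components, the $\pi$-image of a leaf of $\D$.

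Next I would fix such a leaf $\L'$, a point $N_0\in\L'$, a lift $\ph_0$ of $N_0$, and the leaf $\E$ of $\D$ through $\ph_0$, and observe that the vertical distribution of $\pi$ is contained in $\D$: a vertical vector at $\ph$ has the form $T\ph\circ Z$ with $Z\in\X(S_r;\Si)$, and $\ph^*i_{T\ph\circ Z}\om=i_Z\ph^*\om=0$ because $\ph$ is isotropic, so it lies in $\D_\ph$ by~\eqref{DDDD}. It follows that any local smooth section $\sigma$ of the principal bundle $\pi$ with $\sigma(N_0)=\ph_0$ is tangent to $\D$ along $\L'$ --- a tangent vector of $\sigma|_{\L'}$ is mapped by $T\pi$ into the isodrastic distribution, which equals $T\pi(\D)$, hence differs from a vector in $\D$ by a vertical vector, which is again in $\D$ --- and therefore $s:=\sigma|_{\L'}$ takes values in the single leaf $\E$. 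Thus $s$ is a local smooth section of $\pi|_\E\colon\E\to\L'$ on a neighborhood of $N_0$. Proposition~\ref{lss} furnishes a local smooth section $\tau$ of the $\Ham_c(M)$ action on $\E$ near $\ph_0$, normalised so that $\tau(\ph_0)=\id$ and $\tau(\ph)\cdot\ph_0=\ph$; then $N\mapsto\tau(s(N))$ is the desired local smooth section of the $\Ham_c(M)$ action on $\L'$, since it sends $N_0$ to $\id$ and, by equivariance of $\pi$, $\tau(s(N))\cdot N_0=\pi\bigl(\tau(s(N))\cdot\ph_0\bigr)=\pi(s(N))=N$.

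The one step that is not purely formal is the inclusion of the vertical distribution of $\pi$ into $\D$; everything else is the standard transport of local sections along an equivariant principal bundle and could be packaged as an abstract lemma in the spirit of Lemma~\ref{before.sec}. Alternatively, one may forget the weights: by Theorem~\ref{T:wtFlag.homog}(b), restricted to the isotropic locus, the forgetful map $\Flag^{\wt\iso}_{\S,\io,\mu}(M)\to\Flag^{\iso}_{\S,\io}(M)$ is a locally trivial smooth fiber bundle, each leaf $\F$ of $\bar\D$ is a connected component of its restriction over an isodrastic leaf $\L'$ of the base, so $\F\to\L'$ admits local smooth sections, and composing such a section with a local section of the $\Ham_c(M)$ action on $\F$ provided by Proposition~\ref{abo2} yields the claim exactly as above.
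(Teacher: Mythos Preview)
Both approaches are correct. The paper gives no explicit proof; placed immediately after Proposition~\ref{abo2}, the corollary is meant to follow either by your second route (descend via the locally trivial bundle $\Flag^{\wt\iso}_{\S,\io,\mu}(M)\to\Flag^{\iso}_{\S,\io}(M)$) or, equivalently, by rerunning the proof of Proposition~\ref{abo2} with the fibre $\Den(\S)_{\io,\mu}$ replaced by a one-point space in the application of Lemma~\ref{before.sec}. Your first route is precisely this latter argument, spelled out by hand rather than packaged through Lemma~\ref{before.sec}.

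One simplification of your first argument: you need not argue via tangency to $\D$ and integrability to conclude that $s=\sigma|_{\L'}$ lands in a single leaf $\E$. The leaves of $\D$ on $\Emb^{\iso}_{S_r}(M)$ and the isodrastic leaves on $\Flag^{\iso}_{\S,\io}(M)$ are, by construction, connected components of the preimages of a common isodrast $\L\subseteq\Gr^{\iso}_{S_r}(M)$ under the respective projections to $\Gr^{\iso}_{S_r}(M)$. Since any local section $\sigma$ of $\pi$ commutes with these projections (the triangle over $\Gr^{\iso}_{S_r}(M)$ commutes), $\sigma$ carries the preimage of $\L$ downstairs into the preimage of $\L$ upstairs; hence $\sigma(U\cap\L')$, being connected, lies in a single leaf $\E$ of $\D$.
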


\subsection{Weighted isotropic nonlinear flag manifolds as coadjoint orbits}

In this section we describe coadjoint orbits of the Hamiltonian group consisting of weighted isotropic nonlinear flags.

We aim at defining a leafwise symplectic form $\bar\Om$ on $(\Flag_{\S,\io}^{\wt\iso}(M),\bar\D)$.
We start with leafwise differential 2-forms $\Om_i$ on $(\Emb_{S_i}^{\iso}(M)\x\Den_\x(S_i),\D_i\x\ker Th_{S_i})$, for $i=1,\dotsc,r$, defined as in Lemma~\ref{noua}. 
Let $j_i:=\io_{r-1}\o\cdots\o\io_i\in\Emb(S_i,S_r)$.
We consider 
\[
	q_i:=j_i^*\x\pr_i:\Emb_{S_r}^{\iso}(M)\x\Den_\x(\S)\to\Emb_{S_i}^{\iso}(M)\x\Den_\x(S_i),
\]
which maps the distribution $\D\x\ker Th_{\S,\io}$ to $\D_i\x\ker Th_{S_i}$.
Then
\begin{equation}\label{oioi}
	\Om:=\sum_{i=1}^rq_i^*\Om_i
\end{equation}
is a closed leafwise differential 2-form on $(\Emb_{S_r}^{\iso}(M)\x\Den_\x(\S),\D\x\ker Th_{\S,\io})$.

\begin{remark}
	The image under $Tq_i$ of the distribution $\D\x\ker Th_{\S,\io}$ is, in general, strictly included in the distribution $\D_i\x\ker Th_{S_i}$.
	The reason is that  the projection on the $i$th factor $\pr_i:\ker Th_{\S,\io}\to\ker Th_{S_i}$ is not surjective in general.
	This happens for instance in the setting of Example \ref{3,14}, where $\io:\{1,\dots,k\}\to S^1$ with $k\ge 2$:
	the projection on the second factor is $d\{\ga\in C^\oo(S^1):\ga\o\io=0\}$, strictly included in $\ker Th_{S^1}=dC^\oo(S^1)$.
\end{remark}

The $\Diff(\S;\io)$ action on $\Emb_{S_r}^{\iso}(M)\x\Den_\x(\S)$,
\begin{equation}\label{gdot}
	g\cdot(\ph,(\al_i))=(\ph\o g_r,(g_i^*\al_i)),\text{ for }g=(g_1,\dotsc,g_r)\in\Diff(\S;\io),
\end{equation}
has infinitesimal generators of the form
\begin{equation}\label{zizi}
	\zeta_Z(\ph,(\al_i))= (T\ph\o Z_r,(di_{Z_i}\al_i)),\text{ for }Z=(Z_1,\dotsc,Z_r)\in\X(\S;\io).
\end{equation}
They belong to the integrable distribution $\D\x\ker Th_{\S,\io}$.
Notice that $q_i=j_i^*\x\pr_i$ is equivariant over the homomorphism $g\in\Diff(\S;\io)\mapsto g_i\in\Diff(S_i)$, because $j_i\o g_i=g_r\o j_i$ for all $i$. 
Now, from the $\Diff(S_i)$ invariance of $\Om_i$ and \eqref{oioi}, we deduce the  $\Diff(\S;\io)$ invariance of $\Om$.

The next lemma is a generalization of the Lemma \ref{noua}:

\begin{lemma}\label{flg}
	The  kernel of the leafwise differential form $\Om$ is spanned by the infinitesimal generators of the $\Diff(\S;\io)$ action \eqref{gdot} on $\Emb_{S_r}^{\iso}(M)\x\Den_\x(\S)$.
\end{lemma}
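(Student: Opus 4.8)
The plan is to reduce the statement to the known rank computation in Lemma~\ref{noua} by exploiting the decomposition $\Om=\sum_{i=1}^r q_i^*\Om_i$ together with the structure of the maps $q_i$. First I would fix a point $(\ph,(\al_i))$ of $\Emb_{S_r}^{\iso}(M)\x\Den_\x(\S)$ and a tangent vector $(u_\ph,(d\la_i))$ lying in the distribution $\D\x\ker Th_{\S,\io}$, and suppose it is in the kernel of $\Om$ at that point, i.e.\ $\Om((u_\ph,(d\la_i)),(v_\ph,(d\ga_i)))=0$ for every $(v_\ph,(d\ga_i))$ in the distribution. The goal is to produce $Z=(Z_1,\dotsc,Z_r)\in\X(\S;\io)$ with $\zeta_Z(\ph,(\al_i))=(u_\ph,(d\la_i))$, that is, $T\ph\o Z_r=u_\ph$ and $d i_{Z_i}\al_i=d\la_i$ for all $i$; conversely, one must check that all such $\zeta_Z$ genuinely lie in $\ker\Om$, which is the easy half and follows from $\Diff(\S;\io)$-invariance of $\Om$ plus closedness exactly as in the proof of Lemma~\ref{noua} (a contraction-of-an-invariant-closed-form argument, or a direct substitution into \eqref{oioi} using that $q_i$ is equivariant).

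For the hard inclusion I would first extract information at the top level $i=r$. The delicate point is that a test vector $(v_\ph,(d\ga_i))$ in $\D\x\ker Th_{\S,\io}$ does not have independent components: by definition of $\ker Th_{\S,\io}$ the forms $\ga_i$ must satisfy $\io_{i-1}^*\ga_i=0$, so one cannot simply set all but one component to zero. Nonetheless, one can choose test vectors of the form $(v_\ph,0,\dotsc,0)$ with $v_\ph\in\D_\ph$ arbitrary; feeding these into $\Om=\sum q_i^*\Om_i$ and using that $q_i$ sends $(\ph,(\al))$ to $(j_i^*\ph,\al_i)=(\ph\o j_i,\al_i)$ and $v_\ph$ to $v_\ph\o j_i$, the pairing collapses (since the $\Den$-components of the test vector vanish) to $\sum_i\int_{S_i}\bigl(\om(u_\ph\o j_i,v_\ph\o j_i)\al_i-(\ph\o j_i)^*i_{u_\ph\o j_i}\om\wedge\la_i\bigr)=0$. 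Pushing these integrals over $S_i$ forward to $\ph(S_i)\subseteq\ph(S_r)$ and summing, this says a certain $1$-form-valued pairing against $\ph^*i_{v_\ph}\om$ (equivalently against an arbitrary closed $1$-form on $S_r$, by surjectivity of $\D$) vanishes, which by the nondegeneracy half of the argument in Lemma~\ref{noua} forces $u_\ph$ to be an infinitesimal generator: $u_\ph=T\ph\o Z_r$ for some $Z_r\in\X(S_r)$. The point here is that Lemma~\ref{noua} already tells us the kernel of $\Om_r$ alone; the extra terms $q_i^*\Om_i$ for $i<r$ only add pairings supported on the lower-dimensional submanifolds $\ph(S_i)$, which are negligible for this step because they integrate over positive-codimension submanifolds and the test $1$-form $\ph^*i_{v_\ph}\om$ ranges over a space detecting the top stratum; I would make this precise by choosing $v_\ph$ supported away from $\ph(S_{r-1})$, using that $S_r\setminus\io_{r-1}(S_{r-1})$ is dense.

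Having found $Z_r$, I would check that $Z_r$ is automatically tangent to each $\Si_i=(\io_{r-1}\o\cdots\o\io_i)(S_i)$, so that $Z_r\in\X(S_r;\Si)$ and, via the identification $\X(\S;\io)\cong\X(S_r;\Si)$ from Remark~\ref{R:Fr.Emb}, yields a candidate $Z=(Z_1,\dotsc,Z_r)$ with $T\io_i\o Z_i=Z_{i+1}\o\io_i$. Tangency should follow by repeating the previous paragraph's argument with test vectors $v_\ph$ now concentrated near $\ph(S_i)$: the leading contribution comes from $q_i^*\Om_i$ and forces $u_\ph\o j_i=T(\ph\o j_i)\o Z_i$ for some $Z_i\in\X(S_i)$, and compatibility of these $Z_i$ with the already-constructed $Z_r$ is exactly the restriction relation, giving tangency. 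Finally, with $Z\in\X(\S;\io)$ in hand, the difference $(u_\ph,(d\la_i))-\zeta_Z(\ph,(\al_i))$ lies in $\D\x\ker Th_{\S,\io}$, has vanishing $\Emb$-component, and is still in $\ker\Om$ (since $\zeta_Z\in\ker\Om$ by the easy half); so I must show a vector of the form $(0,(d\la_i'))\in\ker\Om$ must vanish. Pairing it with test vectors $(v_\ph,0,\dots,0)$ gives $\sum_i\int_{S_i}(\ph\o j_i)^*i_{v_\ph\o j_i}\om\wedge\la_i'=0$ for all $v_\ph\in\D_\ph$; since $\ph^*i_{v_\ph}\om$ runs through all of $dC^\infty(S_r)$ and, restricted suitably, through enough $1$-forms on each $S_i$, and since $d\la_i'$ (not $\la_i'$) is what is intrinsic, I would argue stratum by stratum from the top down — on $S_r\setminus\io(S_{r-1})$ first, using Lemma~\ref{noua}'s nondegeneracy to kill $d\la_r'$, then descend — concluding $d\la_i'=0$ for all $i$, hence the original vector equals $\zeta_Z(\ph,(\al_i))$.

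I expect the main obstacle to be precisely the bookkeeping in the inductive stratum-by-stratum descent: because the test vectors in $\D\x\ker Th_{\S,\io}$ are constrained by the conditions $\io_{i-1}^*\ga_i=0$, one has to be careful that one still has enough test directions on each stratum to detect the component $d\la_i'$, and that the contributions from the higher strata $q_j^*\Om_j$ with $j>i$ really do vanish on the test vectors chosen at stage $i$ (this uses the density of $S_i\setminus\io_{i-1}(S_{i-1})$ and a careful choice of supports, as in Lemma~\ref{injectiv}). Once the localization-and-descent scheme is set up correctly, each individual stage is just an application of Lemma~\ref{noua}, so the substance of the proof is organizing the reduction rather than any new computation.
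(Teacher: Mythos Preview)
Your strategy is essentially the paper's: establish $u_\ph=T\ph\circ Z_r$ first, then prove tangency of $Z_r$ to each $\Si_i$ by a top-down descent with localized test vectors, then match the density components. Two points where your write-up is less precise than the paper. First, you can in fact set all but one $d\ga_i$ to zero (the components of $\ker Th_{\S,\io}$ are decoupled except for the single constraint $\io_{i-1}^*\ga_i=0$ on the surviving one); the paper uses exactly this, testing with $(0,\dots,0,d\ga_r)$ and $\io_{r-1}^*\ga_r=0$, to obtain $\ph^*i_{u_\ph}\om=0$ directly, whereas your $(v_\ph,0)$-only route needs a separate argument for that step (take $v_\ph$ tangent to $\ph(S_r)$). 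Second, the tangency step is the crux and ``concentrating $v_\ph$ near $\ph(S_i)$'' does not by itself suppress the higher-stratum integrals: with $u_\ph=T\ph\circ Z_r$ and $\ph^*i_{v_\ph}\om=dh$, the paper chooses $h$ vanishing on $\Si_{r-1}$ with $(i_{Z_r}dh)|_{\Si_{r-1}}$ a bump away from $\Si_{r-2}$, then cuts $h$ by a function supported in an $\ep$-tube about $\Si_{r-1}$; the $\int_{S_{r-1}}$ term is unchanged while the $\int_{S_r}$ term tends to $0$ as $\ep\to0$, yielding the contradiction. Your ``leading contribution'' intuition is correct, but this $\ep$-limit is the mechanism that makes it rigorous.
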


\begin{proof}
The formula for the leafwise 2-form in Lemma \ref{noua} provides a similar formula for $\Om$:
\begin{multline}
\Om_{(\ph,(\al_i))}((u_\ph,(d\la_i)),(v_\ph,(d\ga_i)))
=\sum_{i=1}^r\int_{S_i}j_i^*\om(u_\ph,v_\ph)\al_i\\
-\sum_{i=1}^r\int_{S_i}j_i^*(\ph^*i_{u_\ph}\om)\wedge\ga_i
+\sum_{i=1}^r\int_{S_i}j_i^*(\ph^*i_{v_\ph}\om)\wedge\la_i.
\end{multline}
The contraction of $\Om$ with an infinitesimal generator $\ze_Z$, given in \eqref{zizi}, vanishes for all $Z\in\X(\S;\io)$.
This follows from the analogous statement for $\Om_i$ and the infinitesimal generators $\ze_{Z_i}$ on $\Emb_{S_i}^{\iso}(M)\x\Den_\x(S_i)$, 
together with the fact that the infinitesimal generators $\ze_Z$ and $\ze_{Z_i}$ are $q_i$ related.

For the converse statement, we start with an arbitrary tangent vector 
\[
	(u_\ph,(d\la_i))\in\ker\Om_{(\ph,(\al_i))}\subseteq\D_\ph\x \ker h_{\S,\io}.
\]
We need to find $Z=(Z_i)\in\X(\S;\io)$ such that $u_\ph=T\ph\o Z_r$ and $d\la_i=di_{Z_i}\al_i$.
In particular the $r$th component $Z_r$ has to be tangent to the nonlinear flag $\Sigma=(\Sigma_1,\dotsc,\Sigma_{r-1})$ in $S_r$, where $\Sigma_i:=j_i(S_i)$.

The condition that $(u_\ph,(d\la_i))$ lies in the kernel of $\Om$ is equivalent to
\begin{equation}\label{una}
\sum_{i=1}^r\int_{S_i}j_i^*(\ph^*i_{u_\ph}\om)\wedge\ga_i=0,\text{ for all }\ga_i\in\Om^{k_i-1}(S_i;\O_{S_i}),\iota_{i-1}^*\ga_i=0,
\end{equation} 
and 
\begin{equation}\label{doua}
\sum_{i=1}^r\int_{S_i}j_i^*\om(u_\ph,v_\ph)\al_i+\sum_{i=1}^r\int_{S_i}j_i^*(\ph^*i_{v_\ph}\om)\wedge\la_i=0,\text{ for all }v_\ph\in\D_\ph. 
\end{equation}
Choosing differential forms $\ga_i=0$ for $i=1,\dots,r-1$ in \eqref{una}, we get that $\int_{S_r}\ph^*i_{u_\ph}\om\wedge\ga_r=0$, 
for all $\ga_r\in\Om^{k_r-1}(S_r;\O_{S_r})$ with $\io_{r-1}^*\ga_r=0$.
This ensures that $\ph^*i_{u_\ph}\om=0$, meaning that $u_\ph$ takes values in the symplectic orthogonal to $\ph(S_r)$ in $M$. 

We assume by contradiction that $u_\ph\in\D_\ph$ is not everywhere tangent to the isotropic submanifold $\ph(S_r)\subseteq M$.
The subset $S_r\setminus\Si_{r-1}$ is dense in $S_r$ (because $\dim S_{r-1}<\dim S_r$),
so there exists $x\in S_r\setminus\Si_{r-1}$ such that $u_\ph(x)$ is not tangent to $\ph(S_r)$.
We choose another tangent vector $v_\ph\in\D_\ph$ taking values in the symplectic orthogonal to $\ph(S_r)$ in $M$,
\ie $\ph^*i_{v_\ph}\om=0$,  supported in an open neighborhood of $x$ in $S_r\setminus\Si_{r-1}$.
The isotropic embedding theorem of Weinstein \cite{W81} allows to choose $v_\ph$
such that  $\om(u_\ph,v_\ph)\in C^\oo(S_r)$ is a nonzero nowhere negative function.
Plugging $v_\ph$ in \eqref{doua}, all terms are zero except  $\int_{S_r}\om(u_\ph,v_\ph)\al_r>0$, leading to a contradiction.
Thus there exists $Z_r\in\X(S_r)$ with $u_\ph=T\ph\o Z_r$. 

The identity \eqref{doua}, rewritten with our $u_\ph=T\ph\o Z_r$ and arbitrary $v_\ph\in\D_\ph$, so $\ph^*i_{v_\ph}\om=dh$ with arbitrary $h\in C^\oo(S)$, becomes:
\begin{equation}\label{doua2}
\sum_{i=1}^r\int_{S_i}j_i^*(i_{Z_r}dh)\al_i=\sum_{i=1}^r\int_{S_i}j_i^*(dh)\wedge\la_i,\text{ for all } h\in C^\oo(S). 
\end{equation}
We need to show that  $Z_r\in\X(S_r)$ is tangent to all the submanifolds of the nonlinear flag $\Si_1\subseteq\dots\subseteq\Si_{r-1}\subseteq S_r$.
We do it successively, starting with $\Si_{r-1}$ and ending with $\Si_1$.

We assume by contradiction that $Z_r$ is not tangent to  $\Si_{r-1}\subseteq S_r$.
Since $\dim \Si_{r-2}<\dim \Si_{r-1}$, we can find
a point $x\in\Si_{r-1}\setminus\Si_{r-2}$ such that $Z_r(x)$ is not tangent to $\Si_{r-1}$.
We choose $h\in C^\oo(S)$ supported in a tubular neighorhod of $\Si_{r-1}$ 
with the following properties: 
$h$ vanishes on $\Si_{r-1}$ 
(so that $j_i^*dh=0$ for $i=1,\dots,r-1$)
and $i_{Z_r}dh$ restricted to $\Si_{r-1}$ is a positive bump function at $x$ that vanishes on $\Si_{r-2}$
(so that $j_i^*(i_{Z_r}dh)=0$ for $i=1,\dots,r-2$).
Inserting it in  \eqref{doua2} only three  terms survive, giving
\[
\int_{S_{r-1}}j_{r-1}^*(i_{Z_r}dh)\al_{r-1}=\int_{S_r}hd(i_{Z_r}\al_r-\la_r).
\]
The integral over $S_{r-1}$ on the left
doesn't change when
cutting $h$ with a function supported in a tubular $\ep$-neighborhood of $\Si_{r-1}$
which is constant in an $\ep/2$-neighborhood of $\Si_{r-1}$, while the absolute value of the integral over $S_r$ on the right
can be made as small as we need by making $\ep$ small enough.
This leads to a contradiction. Thus $Z_r$ must be tangent to $\Si_{r-1}$, 
so there exists $Z_{r-1}\in\X(S_{r-1})$ which is $j_{r-1}$ related to $Z_r$.

In a similar way, step by step, we address all the remaining submanifolds $\Si_{r-2},\dots,\Si_1$ of the nonlinear flag $\Si$,
finding $Z_i\in\X(S_i)$ that are $j_i$ related to $Z_r$.
Knowing this, the identity \eqref{doua2} becomes
\[
\sum_{i=1}^r\int_{S_i}j_i^*dh\wedge(\la_i-i_{Z_i}\al_i)=0,\text{ for all }h\in C^\oo(S),
\]
which implies that $d(\la_i-i_{Z_i}\al_i)=0$ for all $i$.
Thus $(u_\ph,(d\la_i))=(T\ph\o Z_r,(di_{Z_i}\al_i))$ is the infinitesimal generator at $(\ph,(\al_i))$ for $Z=(Z_i)\in\X(\S;\io)$.
This ensures that the kernel of $\Om$ is spanned by the infinitesimal generators of the $\Diff(\S;\io)$ action.
\end{proof}

As a consequence, $\Om$ descends to a leafwise symplectic form $\bar\Om$ on the associated bundle 
\[
	(\Flag_{\S,\io}^{\wt\iso}(M),\bar\D)=(\Emb_{S_r}^{\iso}(M)\x_{\Diff(\S,\io)}\Den_\x(\S),
	\D\x_{\Diff(\S,\io)}\ker Th_{\S,\io}).
\]
The restriction of $\bar\Om$ to $(\Flag_{\S,\io,\mu}^{\wt\iso}(M),\bar\D)$
is a leafwise symplectic form as well. 
Thus every leaf of $\bar\D$ is symplectic.

Let $\mathcal F$ denote a leaf of the isodrastic distribution $\bar\D$ on $\Flag_{\S,\io}^{\wt\iso}(M)$, equipped with the symplectic form $\bar\Om$.
Restricting \eqref{pl}, we obtain a $\Ham(M)$ equivariant smooth map
\begin{equation}\label{bluebird}
	J\colon\mathcal F\to\ham_c(M)^*,\qquad\langle J(\N,\nu),X_f\rangle=\sum_{i=1}^r\int_{N_i}f\nu_i,
\end{equation}
where we identify $\ham_c(M)=C^\infty_0(M)$ as in \eqref{bird}.
This is a moment map for the (Hamiltonian) action of $\Ham_c(M)$ on $(\F,\bar\Om)$.
Indeed, this follows readily by combining \eqref{oioi} with the expression for the moment map in \eqref{bird}.
Moreover, $J$ is injective according to Proposition~\ref{injectiv}.
By Proposition~\ref{abo2}, the $\Ham_c(M)$ action on $\F$ is transitive.
Using Proposition~\ref{spcase}, we thus obtain the following generalization of Theorem~\ref{t1}.

\begin{theorem}\label{t2}
	The moment map $J\colon(\F,\bar\Om)\to\ham_c(M)^*$ in \eqref{bluebird} is one-to-one onto a coadjoint orbit of the Hamiltonian group $\Ham_c(M)$.
	The Kostant-Kirillov-Souriau symplectic form $\om_{\KKS}$ on the coadjoint orbit satisfies ${J}^*\om_{\KKS}=\bar\Om$.
\end{theorem}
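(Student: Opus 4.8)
The plan is to deduce Theorem~\ref{t2} from the general principle in Proposition~\ref{spcase}, exactly as Theorem~\ref{t1} was obtained, once three ingredients are in place: that $\F$ is a symplectic manifold (with the form $\bar\Om$), that the $\Ham_c(M)$ action on $\F$ is transitive, and that $J$ is an injective equivariant moment map for this action. All three have essentially been assembled in the preceding subsections, so the proof is short and amounts to citing them in the right order.

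First I would recall that $\bar\Om$ is a well-defined leafwise symplectic form on $(\Flag_{\S,\io}^{\wt\iso}(M),\bar\D)$: the leafwise $2$-form $\Om=\sum_{i=1}^r q_i^*\Om_i$ of \eqref{oioi} is closed and $\Diff(\S;\io)$ invariant, and by Lemma~\ref{flg} its kernel is spanned precisely by the infinitesimal generators of the $\Diff(\S;\io)$ action \eqref{gdot}; hence it descends through the associated-bundle construction $\Emb_{S_r}^{\iso}(M)\x_{\Diff(\S;\io)}\Den_\x(\S)$ to a (fibrewise) nondegenerate closed leafwise $2$-form $\bar\Om$. Restricting to the leaf $\F$ gives a weakly nondegenerate symplectic form, still denoted $\bar\Om$. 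Next, transitivity: by Proposition~\ref{abo2} the $\Ham_c(M)$ action on each leaf $\F$ of $\bar\D$ admits local smooth sections, so each leaf is a single $\Ham_c(M)$ orbit. Then I would verify that $J$ in \eqref{bluebird} is a moment map for this action: this follows by combining the decomposition $\Om=\sum_i q_i^*\Om_i$ with the fact, recalled from \cite{Lee} around \eqref{bird}, that each $\Om_i$ has moment map $(N,\nu)\mapsto\int_N f\nu_i$ for the $\Ham_c(M)$ action; summing over $i$ and using $\Ham(M)$ equivariance of each piece gives that $J$ is $\Ham(M)$ equivariant with $d\langle J,X_f\rangle = i_{X_f}\bar\Om$ along $\F$. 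Injectivity of $J$ is immediate from Proposition~\ref{injectiv}. Finally, apply Proposition~\ref{spcase} with $G=\Ham_c(M)$, $(\mathcal M,\Om)=(\F,\bar\Om)$: it yields that $J$ maps $\F$ bijectively onto a coadjoint orbit of $\Ham_c(M)$ and that $J^*\om_{\KKS}=\bar\Om$, which is exactly the assertion.

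The only genuinely substantive point is the moment-map identity, i.e. checking that $i_{X_f}\bar\Om = d\langle J(\cdot),X_f\rangle$ on $\F$ with $J$ as in \eqref{bluebird}; but this reduces term by term, via the $q_i$-pullbacks, to the corresponding identity for $\Om_i$ and the moment map $(N,\nu)\mapsto\int_N f\nu_i$ of the weighted isotropic Grassmannian $\Gr_{S_i}^{\wt\iso}(M)$, which is \eqref{bird}, together with the observation (already made just before \eqref{zizi}--\eqref{oioi}) that the relevant Hamiltonian vector field on $\Flag_{\S,\io}^{\wt\iso}(M)$ is $q_i$-related to the one on $\Gr_{S_i}^{\wt\iso}(M)$. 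Since all the hard analytic work — local sections for the $\Ham_c(M)$ action (Proposition~\ref{abo2}), nondegeneracy of $\bar\Om$ modulo the group directions (Lemma~\ref{flg}), and injectivity of $J$ (Proposition~\ref{injectiv}) — has already been carried out, I expect no real obstacle: the proof is a two-line invocation of Proposition~\ref{spcase} after these facts are lined up, and this is presumably how the authors will phrase it.
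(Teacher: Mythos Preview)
Your proposal is correct and matches the paper's approach essentially line for line: the authors assemble exactly the same ingredients (Lemma~\ref{flg} for nondegeneracy of $\bar\Om$, Proposition~\ref{abo2} for transitivity, Lemma~\ref{injectiv} for injectivity of $J$, and the reduction of the moment-map identity to \eqref{bird} via the decomposition \eqref{oioi}) and then invoke Proposition~\ref{spcase}. The paper in fact places this argument in the paragraph immediately preceding the theorem rather than in a separate proof environment, so your expectation of a ``two-line invocation'' is exactly right.
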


\begin{remark}
	In Section~\ref{SS:manf} we have seen that the inclusion 
	\[
		\Flag_{\S,\io,\mu}^{\wt\iso}(M)\subseteq\prod_{i=1}^r\Gr^{\wt\iso}_{S_i,\mu_i}(M).
	\]
	is a splitting smooth submanifold.
	The symplectic leaf $\F$ described in Theorem~\ref{t2} is a splitting
symplectic submanifold of a product $\prod_{i=1}^r\G_i$ of symplectic
leaves described in Theorem~\ref{t1}.
         To show that this is indeed a splitting smooth submanifold, one
can proceed as in the proof of Theorem~\ref{T:wtFlag.homog}(c), using
the fact that each isodrastic leaf in $\Flag^{\iso}_{\S,\io}(M)$ is a
splitting smooth submanifold in a product of isodrastic leaves in
$\prod_{i=1}^r\Gr^{\iso}_{S_i}(M)$.
         The latter can be show by induction on the depth of the flag
using Lemma~\ref{L:GrSNMiso}.	
	\end{remark}

\begin{remark}
The leafwise symplectic form $\bar\Om$ on $\Flag_{\S,\io}^{\wt\iso}(M)$ can be seen as a Poisson structure 
whose symplectic leaves are the isodrasts $\F$ from the Theorem \ref{t2} (see Remark 3.4 in \cite{W90} about isodrasts in
the nonlinear Grassmannian of weighted Lagrangian submanifolds).
Restricting \eqref{pl}, we obtain a Poisson moment map
\[
J:\Flag_{\S,\io}^{\wt\iso}(M)\to \ham_c(M)^*,\quad \langle J(\N,\nu),X_f\rangle=\sum_{i=1}^r\int_{N_i}f\nu_i,
\]
for the Poisson action of $\Ham_c(M)$ on weighted isotropic nonlinear flags,
where we identify $\ham_c(M)=C^\infty_0(M)$ as before.
\end{remark}

\begin{example}\label{exex}
Let $M$ be a symplectic manifold that possesses isotropic submanifolds diffeomorphic to the sphere $S^r$ with $r>1$.
We use the setting of Example \ref{nest} to describe some coadjoint orbits of $\Ham_c(M)$ consisting of nested weighted spheres.
Since $H^1(S^r;\RR)=0$, by Remark~\ref{ffff} 
each connected component of $\Flag_{\S,\io,\mu}^{\wt\iso}(M)$ is a coadjoint orbit of $\Ham_c(M)$.
Similarly to \eqref{asta} we get
\begin{equation*}
		\Flag_{\S,\io,\mu}^{\wt\iso}(M)=\left\{(\N,\nu)\in\Flag_{\S,\io}^{\wt\iso}(M)\middle|\begin{array}{c}
		\text{
		$\{\int_{N_i^+}\nu_i,\int_{N_i^-}\nu_i\}=\{a_i^+,a_i^-\}$, 
		where $N_i^+$ and $N_i^-$}\\\text{denote the connected components of  $N_i\setminus N_{i-1}$}\end{array}\right\}.
	\end{equation*}
\end{example}

\begin{example}\label{3,14}
The coadjoint orbits of $\Ham_c(\RR^2)$ consisting of pointed vortex loops, studied in \cite{CVpre}, 
are the lowest dimensional examples of coadjoint orbits of weighted nonlinear flags as described in Theorem~\ref{t2}.

Their type is $(\S,\io)$, with $\io:\{1,\dots,k\}\to S^1$, $\io(i)=t_i$ consecutive points on the circle.
We get the cohomology space 
\begin{equation*}
	H(\mathcal S,\iota)
	\cong\R^k\x\R^k,
\end{equation*}
where the class $[\mu]\in H(\S,\io)$ is identified with its integrals
over connected components of $\{1,\dots,k\}$ \resp $S^1\setminus\{t_1,\dots,t_k\}$.
These are $\Ga_i=\int_{\{i\}}\mu_0$ and $w_i=\int_{t_i}^{t_{i+1}}\mu_1$, for $i=1,\dots,k$.
The $\Diff(\S,\io)$ action on $H(\S,\io)$ factorizes through an action of the dihedral group $D_{2k}$ on $\RR^k\x\RR^k$.
More precisely, one let the dihedral group act on a regular $k$-gon, with $\Ga_i$ assigned to the vertex $i$ and $w_i$ assigned to the edge $[i,i+1]$.
For generic density $\mu\in\Den_\x(\S)$, the orbit $H(\S,\io)_{[\mu]}$ consists of $2k$ elements.
The orbit is a one-point set if and only if  $\Ga_1=\dots=\Ga_k$ and $w_1=\dots=w_k$.

Let us denote the weighted flags of type $(\S,\io)$ in $\RR^2$ by $((\{x_1,\dots,x_k\},\nu_0),(C,\nu_1))$.
The area $a$ enclosed by the curve $C$ singles out one isodrast $\L_a\subset\Gr^{\iso}_{S^1}(\RR^2)$, as in Example \ref{drast}.
From Theorem \ref{t2} follows that each connected component of $\Flag_{\S,\io,\mu}^{\wt\iso}(\RR^2)|_{\L_a}$ is a coadjoint orbit of $\Ham_c(\RR^2)$.
Using the above description of $\Diff(\S,\io)$ action on $H(\S,\io)$, we get
\begin{equation}\label{dihe}
	\Flag_{\S,\io,\mu}^{\wt\iso}(\RR^2)|_{\L_a}
	=\left\{((\{x_1,\dots,x_k\},\nu_0),(C,\nu_1))\middle|
	\begin{array}{c}
	\text{$x_i$ consecutive points in
	$C\in\L_a$, $\exists\si\in D_{2k}$}\\
	\text{s.t.  $\int_{\{x_i\}}\nu_0=\si(\Ga_i),\int_{x_i}^{x_{i+1}}\nu_1=\si(w_i)$}
	\end{array}\right\}.
\end{equation}
In particular, the invariants are the area $a$ enclosed by the loop $C$, the point vorticities $\Ga_i$, and the net vorticities $w_i=\int_{x_i}^{x_{i+1}}\nu_1$ between two consecutive points on the loop.
\end{example}

\begin{example}
Let $\L_{[a_1,a_2]}$ be a union of isodrastic leaves of Lagrangian 2-tori in $\RR^4$, as in Example \ref{drast}, with $[a_1,a_2]$ the $\SL(2,\ZZ)$ orbit of the pair of action integrals $(a_1,a_2)\in\RR^2$.
Let $S_1$ be a disjoint union of $k$ circles, $S_2=\TT^2$ the 2-torus, and $\io:S_1\to S_2$ the embedding that maps the $i$-th circle to the circle
$\{t_i\}\x S^1\subseteq\TT^2$, with consecutive points $t_1,\dots, t_k\in S^1$.
For fixed density $\mu\in\Den_\x(\S)$, we aim at describing the coadjoint orbits of $\Ham_c(\RR^4)$ that are connected components of 
$\Flag_{\S,\io,\mu}^{\wt\iso}(\RR^4)|_{\L_{[a_1,a_2]}}$.
To this end we need the isomorphism
\begin{equation*}
	H(\mathcal S,\iota)
	\cong\R^k\x\R^k
\end{equation*}
given by integration of $\mu_1$ over the components of $S_1$
and of $\mu_2$ over the torus surface between two successive embedded circles.
As in the previous example, the $\Diff(\S,\io)$ action on $H(\mathcal S,\iota)$ factorizes through the dihedral group. 
One can now express these  coadjoint orbits of $\Ham_c(\RR^4)$ as in \eqref{dihe}, with the help of the dihedral group.
Thus, the invariants are: the $\SL(2,\ZZ)$ orbit of the two action integrals for the embedded 2-torus, the total weights of the $k$ isotopic loops on the torus,
and the partial weights between two such consecutive loops on the torus.
\end{example}
\color{black}

\appendix

\section{Transitive actions on associated bundles}

The manifolds and Lie groups in this appendix may be infinite dimensional and are assumed to be modelled on convenient vector spaces as in \cite{KM}.

Recall that a smooth $G$ action on $M$ is said to admit local smooth sections if every point $x_0$ in $M$ admits an open neighborhood $U$ and a smooth map $\sigma:U\to G$ such that $\sigma(x)x_0=x$, for all $x\in U$. 
Clearly, such an action is locally and infinitesimally transitive. 
Due to the lack of a general implicit function theorem, one can not expect the converse implication to hold for general Fr\'echet manifolds. 

\begin{lemma}\label{before.sec}
	Let $P\to B$ be a principal $G$-bundle endowed with the action of a Lie group $H$ on $P$ that commutes with the principal $G$ action.
	Suppose the structure group $G$ acts on another manifold $Q$, and consider the canonically induced $H$ action on the associated bundle $P\times_GQ$.
	If the $H$ action on $P$ and the $G$ action on $Q$ both admit local smooth sections, then the $H$ action on $P\times_GQ$ admits local smooth sections too.
\end{lemma}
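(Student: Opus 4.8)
The plan is to build the desired local section for the $H$-action on $P\times_GQ$ by composing three pieces of data: a smooth local section of the principal bundle $P\to B$ (available since a principal bundle is locally trivial by definition), the given local smooth section of the $G$-action on $Q$, and the given local smooth section of the $H$-action on $P$. Throughout I would normalize any local section $x\mapsto\sigma(x)$ of a group action (so $\sigma(x)\cdot x_0=x$) by replacing it with $x\mapsto\sigma(x)\sigma(x_0)^{-1}$; this still satisfies the section identity and in addition carries the base point to the identity, which is exactly what makes the composite section below defined and smooth on an entire neighborhood.

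Fix $x_0=[p_0,q_0]\in P\times_GQ$ and put $b_0:=\pi(p_0)$. First I would isolate the two structural facts that drive the argument. (i) The induced $H$-action on the associated bundle is $h\cdot[p,q]=[h\cdot p,q]$, and this is well defined precisely because the $H$- and $G$-actions on $P$ commute, so that $h\cdot(p\cdot g)=(h\cdot p)\cdot g$. (ii) If $s\colon V\to P$ is a smooth local section of $P\to B$ with $s(b_0)=p_0$, then $(b,q)\mapsto[s(b),q]$ is a diffeomorphism from $V\times Q$ onto the open subset of $P\times_GQ$ lying over $V$, carrying $(b_0,q_0)$ to $x_0$; here surjectivity onto that open set and uniqueness of the representative $[s(b),q]$ are the local triviality of the associated bundle, uniqueness using freeness of the principal $G$-action. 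By (ii) it suffices to exhibit, for $(b,q)$ in a neighborhood of $(b_0,q_0)$, an element $\sigma(b,q)\in H$ depending smoothly on $(b,q)$ with $\sigma(b,q)\cdot x_0=[s(b),q]$.

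The construction is then immediate. Let $\rho\colon O\to G$ be the normalized local section of the $G$-action on $Q$ at $q_0$, so $\rho(q)\cdot q_0=q$ and $\rho(q_0)=e$, and let $\tau\colon W\to H$ be the normalized local section of the $H$-action on $P$ at $p_0$, so $\tau(p)\cdot p_0=p$ and $\tau(p_0)=e$. Using $q=\rho(q)\cdot q_0$ and the defining relation $[p,g\cdot q]=[p\cdot g,q]$ of the associated bundle,
\[
[s(b),q]=[s(b),\rho(q)\cdot q_0]=[s(b)\cdot\rho(q),q_0].
\]
The map $(b,q)\mapsto s(b)\cdot\rho(q)\in P$ is smooth and sends $(b_0,q_0)$ to $p_0$, hence takes values in $W$ on a smaller neighborhood of $(b_0,q_0)$. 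On that neighborhood set $\sigma(b,q):=\tau\bigl(s(b)\cdot\rho(q)\bigr)\in H$; then, by fact (i),
\[
\sigma(b,q)\cdot x_0=\bigl[\tau\bigl(s(b)\cdot\rho(q)\bigr)\cdot p_0,\,q_0\bigr]=[s(b)\cdot\rho(q),q_0]=[s(b),q].
\]
Transporting $\sigma$ back through the chart of (ii) produces a smooth map on an open neighborhood of $x_0$ which is a local section of the $H$-action, as desired.

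The computation at the heart of this is trivial; the only parts that need genuine care — and hence the main obstacle, which is one of careful bookkeeping rather than of substance — are checking well-definedness of the $H$-action on $P\times_GQ$ from the commutation hypothesis in fact (i), setting up the associated-bundle chart of fact (ii) cleanly in the convenient-calculus framework, and arranging the normalizations so that the composite section $\sigma$ is genuinely defined on a neighborhood of $x_0$ and not merely at the point itself.
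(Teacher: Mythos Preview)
Your proof is correct and follows essentially the same route as the paper: obtain a local splitting of points of $P\times_GQ$ into a $P$-component and a $Q$-component, use the $G$-section on $Q$ to absorb the $Q$-component into the $P$-component, then apply the $H$-section on $P$. Your final formula $\sigma(b,q)=\tau\bigl(s(b)\cdot\rho(q)\bigr)$ is exactly the paper's $\sigma(\xi)=\sigma'\bigl(\pi(\xi)\,\sigma''(\rho(\xi))\bigr)$ in different notation. The only cosmetic difference is how you produce the local splitting: you use a local section $s\colon V\to P$ of the principal bundle to trivialize $P\times_GQ\to B$ as $V\times Q$, whereas the paper invokes local triviality of the quotient map $P\times Q\to P\times_GQ$ (citing \cite[Theorem~37.12]{KM}) to get smooth maps $\pi,\rho$ with $[\pi(\xi),\rho(\xi)]=\xi$; these two devices are equivalent, and the ensuing verification is line-for-line the same.
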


\begin{proof}
	Suppose $\xi_0\in P\times_GQ$.
	As the canonical projection $P\times Q\to P\times_GQ$ is a locally trivial smooth bundle \cite[Theorem~37.12]{KM}, there exist an open neighborhood $U$ of $\xi_0$ as well as smooth maps $\pi:U\to P$ and $\rho:U\to Q$ such that for all $\xi\in U$ we have
	\[
		[\pi(\xi),\rho(\xi)]=\xi.
	\] 
	Put $p_0:=\pi(\xi_0)\in P$ and $q_0:=\rho(\xi_0)\in Q$.
	As the $H$ action on $P$ admits local sections, there exist an open neighborhood $V$ of $p_0$ in $P$ and a smooth map $\sigma':V\to H$ such that 
	\[
		\sigma'(p_0)=e_H\qquad\text{and}\qquad\sigma'(p)p_0=p,
	\]
	for all $p\in V$.
	As the $G$ action on $Q$ admits local sections, there exist an open neighborhood $W$ of $q_0$ in $Q$ and a smooth map $\sigma'':W\to G$ such that 
	\[
		\sigma''(q_0)=e_G\qquad\text{and}\qquad\sigma''(q)q_0=q,
	\]
	for all $q\in W$.
	Possibly replacing $U$ with a smaller neighborhood of $\xi_0$, we may assume that for all $\xi\in U$ we have $\rho(\xi)\in W$ and $\pi(\xi)\sigma''(\rho(\xi))\in V$.
	Hence, $\sigma:U\to H$,
	\[
		\sigma(\xi):=\sigma'\bigl(\pi(\xi)\sigma''(\rho(\xi))\bigr)
	\]
	is a well defined smooth map.
	For $\xi\in U$ we obtain
	\begin{multline*}
		\sigma(\xi)\xi_0
		=\sigma(\xi)[p_0,q_0]
		=[\sigma(\xi)p_0,q_0]
		=[\sigma'(\pi(\xi)\sigma''(\rho(\xi)))p_0,q_0]
\\		=[\pi(\xi)\sigma''(\rho(\xi)),q_0]
		=[\pi(\xi),\sigma''(\rho(\xi))q_0]
		=[\pi(\xi),\rho(\xi)]
		=\xi.
	\end{multline*}
	Hence, $\sigma$ is the desired local smooth section of the $H$ action on $P\times_GQ$.
\end{proof}

We will denote the fundamental vector fields of a smooth $G$ action on $M$ by
\[
	\zeta_X(x):=\tfrac\partial{\partial t}\bigl|_{t=0}\exp(tX)x,
\]
where $X\in\mathfrak g$ and $x\in M$.

\begin{lemma}\label{L:loc.sec}
	Let $G$ be a regular Lie group acting on a smooth manifold $M$.
	Suppose every point $x_0$ in $M$ admits an open neighborhood $U$ and a smooth map $\sigma:TM|_U\to\mathfrak g$ such that
	\begin{equation}\label{E:tsigma}
		\zeta_{\sigma(X)}(x)=X,
	\end{equation}
	for all $x\in U$ and $X\in T_xM$.
	Then the $G$ action on $M$ admits local smooth sections.
\end{lemma}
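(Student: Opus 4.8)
The plan is to integrate the pointwise data provided by $\sigma$ along a path in $M$, using the regularity of $G$ to convert a time-dependent curve in $\g$ into a curve in $G$, and then argue smooth dependence on the endpoint. The key point is that near $x_0$ we can choose a smooth chart, and for any $x$ near $x_0$ the straight-line segment $c_x(t) := x_0 + t(x - x_0)$ (in chart coordinates) stays in $U$. Its velocity $\dot c_x(t) \in T_{c_x(t)}M$ can be fed into $\sigma$ to produce $X_x(t) := \sigma(\dot c_x(t)) \in \g$, a curve in the Lie algebra depending smoothly on $(t,x)$.

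First I would make this precise: shrink $U$ to a chart domain that is convex in coordinates and contains $x_0$, so that $c_x([0,1]) \subseteq U$ for all $x$ in a possibly smaller neighborhood $U_0$ of $x_0$. Then $X_\bullet : [0,1]\times U_0 \to \g$, $(t,x)\mapsto \sigma(\dot c_x(t))$, is smooth. Since $G$ is a regular Lie group, the non-autonomous differential equation $\tfrac{d}{dt}g(t) = \zeta_{X_x(t)}(g(t))\cdot\text{(right translation)}$, more precisely the evolution equation $\delta^r(g)(t) = X_x(t)$ with $g(0)=e$, has a unique smooth solution $g(t) = \Evol^r(X_x)(t)$ depending smoothly on the parameter $x$ as well (regularity is exactly the statement that $\Evol^r$ is smooth). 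Define $\sigma(x) := \Evol^r(X_x)(1) = \evol^r(X_x) \in G$.

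Next I would verify that $\sigma(x)\cdot x_0 = x$. Consider the curve $\gamma(t) := g(t)\cdot x_0 \in M$. Then $\gamma(0) = x_0$, and by the chain rule together with the defining property of fundamental vector fields, $\dot\gamma(t) = \zeta_{X_x(t)}(\gamma(t))$. On the other hand, by hypothesis \eqref{E:tsigma}, the curve $c_x$ satisfies $\dot c_x(t) = \zeta_{\sigma(\dot c_x(t))}(c_x(t)) = \zeta_{X_x(t)}(c_x(t))$, and $c_x(0) = x_0$. Both $\gamma$ and $c_x$ thus solve the same time-dependent ODE on $M$ with the same initial condition; by uniqueness of integral curves (the vector field $(t,y)\mapsto \zeta_{X_x(t)}(y)$ is a smooth time-dependent vector field on $M$, at least locally near the curve), $\gamma = c_x$, so in particular $\sigma(x)\cdot x_0 = g(1)\cdot x_0 = \gamma(1) = c_x(1) = x$. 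Smoothness of $\sigma$ in $x$ follows from smoothness of $\Evol^r$ and of $x\mapsto X_x$.

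The main obstacle is the uniqueness-of-integral-curves step on the infinite-dimensional manifold $M$: without a general existence-and-uniqueness theorem for ODEs on convenient manifolds one cannot simply invoke the classical result. I expect this to be handled by the observation that $\gamma$ and $c_x$ are \emph{both} already constructed (we do not need existence — $c_x$ is given by hand and $\gamma$ is built from the group evolution), so what is needed is only that two given smooth curves with the same initial value and satisfying the same first-order relation agree. This can be reduced to uniqueness for a linear-type equation in a chart, or more cleanly: the curve $t\mapsto g(t)^{-1}\cdot c_x(t)$ has initial value $x_0$ and one computes its derivative to be $\zeta_{-X_x(t)}(\,\cdot\,) + (\text{pushforward of }\dot c_x(t))$, which vanishes identically by \eqref{E:tsigma} and the cocycle property of the $G$ action; hence $g(t)^{-1}\cdot c_x(t) \equiv x_0$, i.e. $c_x(t) = g(t)\cdot x_0$. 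This avoids invoking an ODE uniqueness theorem on $M$ altogether and only uses that $\tfrac{d}{dt}$ of a constant curve is zero, which is legitimate in the convenient calculus.
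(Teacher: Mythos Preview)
Your proposal is correct and follows essentially the same route as the paper: choose straight-line paths $c_x$ in a chart, feed $\sigma\circ c_x'$ into $\Evol^r$ using regularity of $G$, and set the section to be $\evol^r(\sigma\circ c_x')$. The paper compresses the verification that $g(t)\cdot x_0=c_x(t)$ into the single word ``thus'', whereas you correctly identify that this is the nontrivial point (uniqueness of integral curves on a possibly infinite-dimensional $M$) and supply the standard fix by differentiating $t\mapsto g(t)^{-1}\cdot c_x(t)$; two small slips to clean up are the overloaded use of the symbol $\sigma$, and the fact that what you actually need in the last step is the implication ``zero derivative $\Rightarrow$ constant'' (valid in convenient calculus since duals separate points), not its converse.
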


\begin{proof}
	We will construct a local section using an argument due to Moser \cite[Section~4]{M65}.
	Suppose $c:[0,1]\to U$ is a smooth curve.
	We seek a smooth curve $g:[0,1]\to G$ such that 
	\begin{equation}\label{E:c}
		c(t)=g(t)c(0).
	\end{equation}
	Differentiating, we obtain
	\begin{equation}\label{E:c'}
		c'(t)=\zeta_{\dot g(t)}(c(t)),
	\end{equation}
	where $\dot g(t):=\frac\partial{\partial h}\bigl|_{h=t}g(h)g(t)^{-1}$ denotes the right logarithmic derivative of $g$.

	Since $G$ is regular \cite[Definition~38.4]{KM}, there exists a unique smooth curve $g=\Evol^r\bigl(\sigma\circ c'\bigr)$ in $G$ such that $\dot g(t)=\sigma(c'(t))$ and $g(0)=e$.
	Using \eqref{E:tsigma}, we see that \eqref{E:c'} and, thus, \eqref{E:c} hold true.
	Evaluating at $t=1$, we obtain a smooth map
	\[
		s:C^\infty([0,1],U)\to G,\qquad s(c):=g(1)=\evol^r\bigl(\sigma\circ c'\bigr).
	\]
	By construction, $c(1)=s(c)c(0)$, for all smooth curves $c:[0,1]\to U$.

	To obtain a local smooth section for the $G$ action on $M$, it suffices to compose $s$ with a smooth map $U\to C^\infty([0,1],U)$, $x\mapsto c_x$ satisfying $c_x(0)=x_0$ and $c_x(1)=x$.
	The latter can readily be constructed using a chart for $M$ centered at $x_0=0$.
	Indeed, shrinking $U$ so that it becomes star shaped with center $x_0=0$ in such a chart, we may use $c_x(t):=tx$.
\end{proof}


{
\footnotesize

\bibliographystyle{new}
\addcontentsline{toc}{section}{References}

}
\end{document}